\definecolor{darkgreen}{rgb}{0,0.6,0}
\definecolor{darkblue}{rgb}{0,0,0.9}
\title[Primitive permutation groups of affine type]{Primitive permutation groups of finite Morley rank and affine type}
\author{Ay\c{s}e Berkman}
\address{Mathematics Department, Mimar Sinan Fine Arts University, Istanbul, Turkey}
\email{ayse.berkman@msgsu.edu.tr, ayseberkman@gmail.com}
\author{Alexandre Borovik}
\address{Department of Mathematics, University of Manchester, UK}
\email{alexandre $\gg {\rm at} \ll $ borovik.net}
\thanks{\textit{Keywords:} Groups of \fmrd, primitive actions, generically transitive actions, primitive groups of affine type}
\thanks{\copyright\ 2024 Ay\c{s}e Berkman and Alexandre Borovik}
\date{6 December 2024}
\subjclass[2010]{20F11, 03C60}
\begin{document}

	\newtheorem{lemma}{Lemma}[section]
	\newtheorem{theorem}{Theorem}
	\newtheorem{corollary}[lemma]{Corollary}
	\newtheorem{proposition}[lemma]{Proposition}
	\newtheorem{remark}[lemma]{Remark}
	\newtheorem{example}[lemma]{Example}
	\newtheorem{fact}[lemma]{Fact}
    \newtheorem{our}[lemma]{Theorem}
	\newtheorem{question}[lemma]{Question}
	\theoremstyle{definition}
	\newtheorem{definition}{Definition}
	\newtheorem{conjecture}{Conjecture}
\newtheorem{problem}[conjecture]{Problem}
	\newtheorem{examples}[lemma]{Examples}
		\newtheorem{hypothesis}{Hypothesis}
	\newcommand{\acf}{algebraically closed field }
	\newcommand{\acfd}{algebraically closed field}
	\newcommand{\acfs}{algebraically closed fields}
	\newcommand{\fmr}{finite Morley rank }
	\newcommand{\rk}{\operatorname{rk}}
	\newcommand{\ddeg}{\operatorname{deg}}
	\newcommand{\psrk}{{\rm psrk}}
	\newcommand{\stab}{{\rm stab}}
	\newcommand{\symm}{{\rm Sym}}
	\newcommand{\fmrd}{finite Morley rank}
	\newcommand{\sll}{\operatorname{SL}}

\def\nn{\ensuremath \mathbb{N}}

\newcommand{\bq}{\begin{quote}}
\newcommand{\eq}{\end{quote}}
\newcommand{\bd}{\begin{description}}
\newcommand{\ed}{\end{description}}
\newcommand{\bi}{\begin{itemize}}
\newcommand{\ei}{\end{itemize}}
\newcommand{\ben}{\begin{enumerate}}
\newcommand{\een}{\end{enumerate}}

\begin{abstract}
We give a review of one of the lines in development of the theory of groups of finite Morley rank. These groups naturally appear in model theory as model-theoretic analogues of Galois groups, therefore their actions and their role as permutation groups is of primary interest. We restrict our story to the study of connected groups of finite Morley rank $G$ acting in a definably primitive way on a set $X$ and containing a definable abelian normal subgroup $V$ which acts on $X$ regularly --- the so-called \emph{primitive groups of affine type}. For reasons explained in the paper, this case plays a central role in the theory.
\end{abstract}
	
\maketitle

\section{Introduction}
This paper is a description of a particular research project in  model-theoretic algebra which, we hope, is interesting as a case study in project design and project planning in mathematical research. The authors (who refer to themselves in this paper as ``we''), worked on it for  13 years starting in 2011. Our papers from this project,  if one reads them in the chronological order \cite{bbgeneric,bbpseudo,bbsharp,bbnotsharp,avb,bbsolvable}, give no idea of what our project was aimed at, and why it was developed this particular way. In the present paper, we are trying to explain {our thinking  and  strategy}.

We are trying to keep our paper accessible and avoid unnecessary technical details. Here is the main result of the series of six {papers} \cite{bbgeneric,bbpseudo,bbsharp,bbnotsharp,avb,bbsolvable}:

\medskip

\noindent
\textbf{Main Theorem} (Theorem \ref{th:mail-on-primitive-affine}, page  \pageref{th:mail-on-primitive-affine}).	\emph{Let $(G,X)$ be a connected definably primitive permutation group of \fmr and  affine type. Assume that the action is generically $t$-transitive and $\rk(X) = n$.}

\emph{Then $t \leqslant n +1$.}

\emph{Moreover, if $t = n +1$, then  the group $G$ is isomorphic to the affine general linear group $\operatorname{\rm AGL}_n(K) \simeq K^n\rtimes \operatorname{\rm GL}_n(K)$  for some \acf $K$,  and the action of $G$ on $X$ is equivalent to $\operatorname{\rm AGL}_n(K)$  acting on the $n$-dimensional affine space $X=\mathbb{A}_n(K)$.}

\medskip

All the terms involved will soon be explained.

\subsection{Some of the very first definitions and a historic remark}
Let $(G,X)$ be a permutation group (that is, a group $G$ acts on a set $X$  and the kernel of the action is trivial); both $G$ and $X$ could be infinite. One says that $(G,X)$ is \emph{primitive}, if the only $G$-invariant equivalence relations on $X$ are trivial or universal.

A permutation group is \emph{regular} (or \emph{sharply transitive}) if it is transitive and the stabiliser of each point is trivial. A primitive permutation group is of \emph{affine type} if it contains a normal abelian regular subgroup. In the finite case, primitive solvable groups are inevitably of affine type, and they had been introduced and studied by \'{E}variste Galois, in a work which also led to  a new class of algebraic objects---finite fields, nowadays known as Galois fields. A very illuminating analysis of Galois's work on primitivity is Peter Neumann's paper \cite{Neumann2006}.

\subsection{Bases of a permutation group.} Given a permutation group $G$ (not necessary finite) on a set $X$, a finite subset of $X$ is said to be a \emph{base} for $G$ if its pointwise stabilizer in $G$ is trivial. The minimal size of a base for $G$ on $X$ is denoted by $b(G,X)$. Bases have been originally introduced  for computation in finite groups as they provide, for many groups $G$, convenient and compact parametrisations: if $b_1, \dots, b_n$ is a base, then every element  $g\in G$ is uniquely determined by the $n$-tuple $(b_1^g,\dots, b_n^g)$. Alternative ${\rm Alt}_n$ and symmetric ${\rm Sym}_n$ groups are very special extreme cases:
\[
b({\rm Alt}_n, \{1,\dots, n\})= n-2 \mbox{ and } b({\rm Sym}_n, \{1,\dots, n\}) = n-1,
\]
there is no gain in efficiency of calculations from the use of bases.

Notice that the concept of `parametrisation' of an algebraic structure in terms of other structures belongs to  model theory, and is one of the reasons for deep conceptual links and analogies between  finite group theory and group-theoretic aspects of  model theory.

\subsection{Binding groups} Binding groups are model-theoretic analogues of Galois groups; they were introduced in the 1970s by Boris Zilber \cite{Zi-binding}. They are very natural mathematical objects, as the following canonical example shows.

Let $X$ be a finite dimensional vector space and $G$ its automorphism group. A  basis of $X$ as a vector space is also a base for the action of $G$ as a permutation group on $X$, and elements of $G$ can be encoded as images of this base, that is, finite tuples of vectors, commonly known as matrices; the dependencies between vectors in the space allow us to express composition of automorphisms as product of matrices: the group $G$ becomes \emph{definable} in $X$. It was Zilber's seminal discovery that a similar construction can be carried out over a vast class of mathematical structures which have good logical properties of `dependency' between elements (although the nature of this dependency could be far away from that of  linear dependency in vector spaces).

\subsection{Groups of finite Morley rank} \emph{Groups of finite Morley rank} emerged as binding groups in Zilber's analysis of arbitrary $\aleph_1$-categorical structures; this made them a focal point of model theoretic algebra. They are abstract groups equipped with a suitable notion of dimension called \emph{Morley rank} (denoted $\rk$) on definable sets.

Examples of groups of finite Morley rank are furnished by algebraic groups over algebraically closed fields, with Morley rank equal to the usual algebraic-geometric dimension of Zariski closures. The theory of groups of finite Morley rank started in pioneering works by Zilber \cite{Zi-GR}  and Cherlin \cite{Ch-GSR}; they formulated what remains the central conjecture in the subject:
\begin{conjecture} \label{conj:Cherlin-Zilber} \emph{The Cherlin-Zilber Algebraicity Conjecture.}
Infinite simple groups of finite Morley rank are simple algebraic groups over algebraically closed fields.
\end{conjecture}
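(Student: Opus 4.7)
The plan is to attempt a proof by contradiction using induction on Morley rank. Assume there is a minimal counterexample, i.e.\ an infinite simple group $G$ of \fmrd{} which is not isomorphic to a simple algebraic group over an \acfd, but for which every definable proper infinite simple section is so isomorphic. First I would analyze the Sylow $2$-subgroup $S$ of $G$, invoking the general structural result that in a group of \fmrd{} the connected component $S^\circ$ decomposes as a central product $B * T$, where $B$ is a definable connected nilpotent $2$-subgroup of bounded exponent and $T$ is a divisible abelian $2$-group. This yields the standard case division into four \emph{types}: \emph{even} ($T=1$, $B\ne 1$), \emph{odd} ($B=1$, $T\ne 1$), \emph{mixed}, and \emph{degenerate} ($S^\circ=1$).

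The strategy is then to follow the Borovik program and handle each type separately by transplanting into the \fmrd{} setting the local-analytic techniques of the Classification of Finite Simple Groups. For even type, I would use signalizer functors and a weakly embedded subgroup analysis to isolate a definable root system, interpret an \acfd{} of characteristic $2$ via Zilber's field theorem, and finish with a Curtis--Tits style recognition theorem for Chevalley groups. For odd type, Prüfer $2$-rank plays the role of Lie rank, and the analogous program aims to reconstruct the algebraic root datum from centralisers of involutions together with a field interpreted on a maximal $2$-torus acting on a unipotent-like section. Mixed type should be shown to be vacuous by a compatibility argument between $B$ and $T$ applied inside centralisers of $2$-elements. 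Along the way I would exploit results like the main theorem of the present paper: generic $t$-transitivity bounds give a powerful handle on the actions of Borel-like subgroups on coset spaces, and regular actions of abelian normal subgroups are precisely the situations in which one hopes to interpret the additive group of a field.

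The hard part, and where I expect the argument to stall, is the \emph{degenerate type}: a hypothetical simple group of \fmrd{} whose Sylow $2$-subgroup is finite (in fact, by Borovik--Burdges--Cherlin, in which case $G$ has no involutions at all). None of the classical tools based on strongly embedded subgroups, $2$-local geometries, or fusion of involutions is available, and there is no visible torus on which to interpret a field by the usual route. My line of attack would be to try to force the existence of a definable infinite abelian subgroup $A$ acting with a rank-preserving regular-like orbit on a definable homogeneous space, and then run a Hrushovski--Zilber group configuration on the induced action to interpret an \acfd; the bound $t\le n+1$ and the identification of $\operatorname{AGL}_n(K)$ in the extremal case of the main theorem would be the template. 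A secondary but still serious obstacle is the possible existence of a \emph{bad field} (a definable \acfd{} with a proper infinite definable multiplicative subgroup) or a \emph{bad group} (a nonsolvable connected group of \fmrd{} all of whose definable connected proper subgroups are nilpotent); either of these would have to be ruled out, or at least shown to be incompatible with the hypothesis of being a minimal simple counterexample, before the final recognition step for algebraic groups can be invoked.
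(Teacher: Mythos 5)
There is a fundamental problem here that precedes any assessment of your strategy: the statement you are trying to prove is the Cherlin--Zilber Algebraicity Conjecture, which the paper explicitly presents as Conjecture~\ref{conj:Cherlin-Zilber} and describes as \emph{the} central open conjecture of the subject. The paper contains no proof of it, and none exists in the literature. What you have written is a summary of the Borovik program --- the four-way division into even, odd, mixed, and degenerate types via the structure of Sylow $2$-subgroups --- and you correctly report its current status: even type is a theorem (the 560-page book \cite{abc}), mixed type is eliminated, odd type is an active but unfinished program, and degenerate type is essentially untouched beyond Fr\'{e}con's rank~$3$ result. But reporting the status of a program is not a proof, and you yourself concede that the argument ``stalls'' at degenerate type and that bad groups and bad fields remain unexcluded. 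Those are not loose ends to be tidied; they are precisely the open problems that make this a conjecture rather than a theorem. A proof attempt that says ``here is where I expect the argument to stall'' has, by its own admission, a gap it cannot close.

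Two further specific concerns. First, your proposal to ``exploit the main theorem of the present paper'' inside the proof gets the direction of dependency backwards: the results on primitive groups of affine type and generically multiply transitive actions \emph{use} heavy classification machinery (the even type classification, the Generic Identification Theorem, $K$-group hypotheses) as input; they are consequences of partial progress toward the conjecture, not tools for proving it, and several of the paper's own statements (e.g.\ Conjectures~\ref{conj:definable-on-simple} and \ref{conj:no-regular-type}) are explicitly noted to \emph{follow from} the Algebraicity Conjecture rather than feed into it. Second, for degenerate type your proposed route --- forcing a definable infinite abelian subgroup with a regular-like orbit and running a group configuration to interpret a field --- founders on the known pathology that degenerate-type simple groups need not contain any involutions at all and that no field interpretation is currently available there; this is exactly why the paper's Problems in Section~\ref{sec:COnclusions} are posed as open. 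In short: the statement is not provable by the methods you describe, and the paper does not claim otherwise.
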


The theory of groups of finite Morley rank has a simple and transparent axiomatisation which could be found in books \cite{abc,bn}. We have a \emph{definable universe} $\mathcal{U}$, a set of sets closed under standard set-theoretic operations of union, intersection, set-theoretic difference, finite direct products, projections. Finite subsets of definable sets are definable. Non-empty definable sets are assigned non-negative integers, their ranks.\footnote{The axioms for a definable universe allow multiplying  of the rank function by a positive  integer constant. But, because of the nature of results in this paper, the expression  “rank 1”, as in Theorem \ref{fact-pseudo},  is actually meaningful, because increase in value sof the rank function increases the strength of other assumptions in our results.  } A function is definable if its graph is definable. A group is definable if its ground set is definable and operations of multiplication and inversion are definable as functions. In particular, finite groups are groups of Morley rank $0$.

All these definitions and axioms are chosen in a way making them valid for constructible sets over an algebraically closed field, with the Morley rank being the  algebraic-geometric dimension of Zariski closures. Once stated at the beginning of the book \cite{abc}, the axioms are hardly mentioned again in 500+ pages --- the analogy with elementary level algebraic geometry is sufficient for understanding. In case of any difficulties with understanding this text, take a glance at a few first chapters of \cite{abc} or \cite{bn}.

From now on, when we talk about a permutation group $(G,X)$ of \fmr we assume that the group $G$, the set $X$, and the action of $G$ on $X$ all belong to some definable universe.

\subsection{Some elementary facts} A few basic properties of groups of finite Morley rank need to be mentioned from the beginning because they justify subsequent definitions. Stabilisers of points in definable actions are definable (in particular, centralisers of elements are definable); there is a descending chain condition for definable subgroups, which means that definable actions on sets have finite bases. It also follows that every group  $G$ of  \fmr contains a unique minimal definable normal subgroup $G^\circ$ of finite index, called the \emph{connected component} of $G$. A group $G$ is \emph{connected} if $G^\circ = G$. \emph{Simple groups} of \fmr are those which contain no non-trivial proper definable normal subgroups; it could be shown that they are simple as abstract groups, that is, has no non-trivial proper normal subgroups, definable or not. A compact exposition of all introductory material can be found in \cite{abc}.

We recommend Gregory Cherlin's informative and incisive  survey \cite{Cherlin2023} of the current state of the classification of simple groups of finite Morley rank. In short, there are  three types of   simple  groups of finite Morley rank: \emph{degenerate} (they do not contain involutions), \emph{odd} (contain involutions, but do not contain infinite subgroups of  exponent $2$), \emph{even} (contain infinite subgroups of exponent $2$).  Simple groups of even type have been identified as simple algebraic groups over algebraically closed fields of  characteristic $2$; a self-contained proof fills a book of 560 pages \cite{abc}. Little is known about simple groups of degenerate type beyond a fantastic result by Olivier Fr\'{e}con on groups of Morley rank 3 \cite{Frecon2018} (beautifully elucidated by Luis-Jaime Corredor and Adrien Deloro \cite{Corredor-Deloro2022}). On the contrary, quite a lot is known about groups of odd type, but still not enough for proving for them the special case of  Conjecture \ref{conj:Cherlin-Zilber}:

\begin{conjecture}  \emph{Cherlin-Zilber Algebraicity Conjecture for Groups of Odd Type}.
Infinite simple groups of finite Morley rank and odd type are simple algebraic groups over algebraically closed fields of odd or zero characteristic.
\end{conjecture}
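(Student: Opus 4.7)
I would attack the conjecture by a minimal counterexample argument, following the Borovik program described in \cite{abc,bn}. Let $G$ be a counterexample of minimal Morley rank: an infinite simple group of \fmrd\ and odd type which is not algebraic. The strategy has three broad stages.

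The first stage is 2-local structure. In odd type, a Sylow 2-subgroup $S$ has a definable connected component $T = S^\circ$ which is a divisible abelian 2-torus, and the crucial preliminary step is to show that $\bar W = N_G(T)/C_G(T)$ is a Weyl group of some crystallographic root system $\Phi$ acting on the character group of $T$ as the expected reflection representation. This would draw on the theory of decent tori, on Burdges's unipotence theory, and on careful induction on centralisers of involutions.

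The second stage applies signalizer functor methods inside each centraliser $C_G(i)$ of an involution $i \in T$. By minimality of $G$, definable quasi-simple sections of $C_G(i)$ are algebraic of strictly smaller Morley rank, so standard-component machinery should extract a definable root $\operatorname{SL}_2$ inside the centraliser. Assembling these across conjugates of $i$ would give a system of root subgroups $\{U_\alpha : \alpha \in \Phi\}$ normalised by $T$, defined over an \acf $K$. It is precisely here that the main theorem of the present paper enters decisively: a suitable parabolic-like subgroup $P$ acts definably primitively, and generically $t$-transitively for $t$ maximal, on the opposite unipotent radical, and Theorem \ref{th:mail-on-primitive-affine} would then pin $P$ down as $\operatorname{AGL}_n(K)$, thereby fixing the Chevalley commutator relations among the $U_\alpha$.

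The final stage is a Curtis--Tits or Phan-style recognition: once all rank-two root subsystems have been identified, the relations among the root subgroups uniquely determine $G$ as a Chevalley or twisted Chevalley group over $K$. The main obstacle, and the reason the conjecture remains open after decades of effort, is the persistent spectre of \emph{degenerate} definable subgroups---connected definable subgroups of odd type containing no involutions, which have no algebraic counterpart. They can appear as kernels of signalizer functors or as unexpected intersections of centralisers, and no general structure theory controls them. Short of a higher-rank generalisation of Fr\'econ's rank-3 theorem \cite{Frecon2018}, the identification outlined above has to be supplemented by extensive ad hoc exclusions, which is precisely where previous attempts have stalled.
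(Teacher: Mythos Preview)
The statement you are addressing is a \emph{conjecture}, not a theorem: the paper explicitly presents it as open (``quite a lot is known about groups of odd type, but still not enough for proving for them the special case of Conjecture~\ref{conj:Cherlin-Zilber}''). There is no proof in the paper to compare against, and your own final paragraph concedes that the conjecture ``remains open after decades of effort''. What you have written is not a proof but a summary of the Borovik program together with a speculative suggestion for how the paper's main theorem might slot in.

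That suggestion is the concrete gap. You assert that Theorem~\ref{th:mail-on-primitive-affine} ``enters decisively'' via a parabolic-like subgroup $P$ acting on an opposite unipotent radical. But the theorem has a very sharp hypothesis: a definably primitive action of affine type which is generically $(n+1)$-transitive on a set of rank $n$. Nothing in your stages one and two manufactures such an action inside a minimal counterexample $G$; you would already need to know that some Levi-type subgroup acts on an abelian unipotent radical with the maximal possible degree of generic transitivity, which is essentially what one is trying to establish. The paper itself never proposes Theorem~\ref{th:mail-on-primitive-affine} as an ingredient in the odd-type conjecture; the open problems in Section~\ref{sec:COnclusions} point instead toward primitive groups of \emph{regular} type. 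Your stages one and three are a reasonable outline of the standing strategy, but stage two, where you invoke the present paper, is unsupported, and the genuine obstacles you name at the end---degenerate sections with no involutions, the absence of a higher-rank Fr\'{e}con theorem---are untouched by Theorem~\ref{th:mail-on-primitive-affine}.
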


\subsection{Permutation groups of finite Morley rank} Talking about permutation actions of groups of finite Morley ranks, we mean \emph{definable} actions on \emph{definable} sets within some ambient definable universe.

The following simple lemma links the sizes of the bases of a permutation group $(G,X)$ with the ranks of $G$ and $X$; a proof is given as a typical example of arguments in our theory.

\begin{lemma} Let $B = (b_1,\dots, b_k)$ be a base of a permutation group $(G,X)$ of \fmr. Then
\[
\rk(G) \leqslant k \cdot \rk(X).
\]
\end{lemma}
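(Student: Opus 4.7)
The plan is to exhibit an injective definable map from $G$ into $X^k$ and then apply the standard product formula for Morley rank. Specifically, I would consider the orbit map
\[
\phi \colon G \longrightarrow X^k, \qquad g \longmapsto (b_1^g, b_2^g, \dots, b_k^g).
\]
Since the action of $G$ on $X$ lives in the ambient definable universe, $\phi$ is a definable function.

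Next I would verify that $\phi$ is injective. If $\phi(g) = \phi(h)$, then $b_i^{gh^{-1}} = b_i$ for every $i$, so the element $gh^{-1}$ lies in the pointwise stabiliser of $B$. But $B$ is a base, so this stabiliser is trivial, forcing $g = h$. Hence $\phi$ is a definable bijection between $G$ and its image $\phi(G) \subseteq X^k$.

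Finally I would invoke three standard properties of Morley rank in any definable universe, each of which has already been mentioned or is immediate from the axioms sketched in the introduction: definable bijections preserve rank, giving $\rk(G) = \rk(\phi(G))$; a definable subset has rank no greater than that of the ambient definable set, giving $\rk(\phi(G)) \leqslant \rk(X^k)$; and the product rule gives $\rk(X^k) = k \cdot \rk(X)$. Chaining these inequalities yields $\rk(G) \leqslant k \cdot \rk(X)$, as required.

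There is no genuine obstacle here: the lemma is a direct consequence of the injectivity of the base-encoding map $\phi$ combined with the product formula for Morley rank. Its role in the paper appears to be pedagogical, illustrating the rank-dimension bookkeeping that will underlie the deeper results to follow; the only point worth making carefully for a reader new to the subject is the transition from ``pointwise stabiliser is trivial'' to ``$\phi$ is injective,'' which is where the definition of a base is actually used.
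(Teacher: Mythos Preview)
Your proof is correct and essentially identical to the paper's: both consider the injective definable map $g \mapsto (b_1^g,\dots,b_k^g)$ into $X^k$ and read off the rank inequality from $\rk(G) = \rk(B^G) \leqslant \rk(X^k) = k\cdot\rk(X)$. The only cosmetic difference is that the paper names the image as the orbit $B^G$ rather than $\phi(G)$.
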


\begin{proof}
If we consider the action of $G$ on the direct power $X^k$ and the $G$-orbit $B^G$ of $B\in X^k$ then the map $g\mapsto B^g$ from $G$ to $B^G$ is injective and therefore
\[
\rk(G) = \rk(B^G) \leqslant \rk(X^k) = k \cdot \rk(X).
\]
\end{proof}

The theory we are talking about is woven of this kind of little observations, but as a whole it is very deep and structurally complex.

For permutation groups of \fmrd,  Borovik and Cherlin have demonstrated the  power of several classification techniques --- adaptation of methods of CFSG (Classification of Finite Simple Groups), classification of groups of finite Morley rank and even type \cite{abc}, specific  techniques  for groups of odd type \cite{BoBu,BBC-ID,Deloro-Jaligot}, and analogues of O'Nan-Scott/Aschbacher reductions \cite{Macpherson-Pillay95} --- to analyze basic questions about actions. However, many natural questions about permutation groups of finite Morley rank are still open. We reiterate:
It is a good understanding of permutation  groups of finite Morley rank, applied to binding groups, that would matter for the wider model theory.
This thesis is confirmed, for example, in the recent papers of  Freitag, Jimenez, and Moosa \cite{FM, FJM}.

\subsection{Difficulties arising at the very first step} The following example, well known to algebraic geometers, shows striking differences between the theories of finite permutation groups and permutation groups of finite Morley rank.

\begin{example} \label{ex:unlimited} Let $K$ be an algebraically closed field and  $$G= K
\oplus \cdots \oplus K$$  a direct sum of $n$ copies of the additive
group of $K$. Consider the following action of $G$ on the affine space
${\mathbb{A}}^2(K)$:
$$
(a_1, \ldots, a_n): \; \left( \begin{array}{c}
                           x \\ y
                       \end{array} \right) \; \mapsto \;
                       \left( \begin{array}{c}  x \\ y + a_1x + \cdots +
a_nx^n \end{array} \right)
$$
Obviously this  action  is faithful.
\end{example}

We see that Morley ranks of  groups acting faithfully and definably on a set of rank $2$ are unbounded. Of course, actions in the example above are not transitive. But a slightly more sophisticated  construction \cite[Example 8]{borche} produces groups of unbounded finite Morley rank  acting definably, faithfully, and \emph{transitively} on a set of Morley rank 2.

This makes the study of permutation groups of finite Morley rank much more difficult than the study of finite permutation groups $(G,X)$, where we have the well-known bound $|G| \leqslant |X|!$. It is worth remembering that the classification of the critically important subclass of finite primitive groups:  $2$-transitive permutation groups became possible only as a corollary of the enormous Classification of Finite Simple Groups.

Also, we have to keep in mind that permutation groups of \fmr include algebraic groups over \acfs\ acting by birational automorphisms on algebraic varieties. As one example, the study of algebraic subgroups of Cremona groups remains a hot topic, see \cite{Fanelli-Floris-Zimmerman2023,Schneider-Zimmermann2021} and the bibliography contained there.

In the spirit of the Cherlin-Zilber Algebraicity Conjecture, we hope to reduce the study of permutation groups of \fmr to the case of definable actions of algebraic groups.

\subsection{Primitive permutation groups}

However,  in the case of \emph{primitive} actions the situation is better than Example \ref{ex:unlimited} suggests:

\begin{theorem} \emph{(}Borovik and Cherlin \cite{borche}\emph{)} \label{th:Borovik-Cherlin} There is a function $$f: \nn \rightarrow \nn$$ such that if a group $G$ of finite Morley rank acts  on a definable set $X$ definably, faithfully, and definably primitively \emph{(}i.e., preserves no definable proper nontrivial equivalence relation\emph{)}, then
\[
{\rm rk}(G) \leqslant f({\rm rk}(X)).
\]
\end{theorem}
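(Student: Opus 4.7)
I would proceed by induction on $n = \rk(X)$, using a Macpherson--Pillay style O'Nan--Scott/Aschbacher dichotomy for primitive permutation groups of \fmr \cite{Macpherson-Pillay95} to split the problem into a short list of structurally transparent cases. Since a primitive action is transitive and $\rk(G)=\rk(G^\circ)$, the orbit--stabiliser identity $\rk(G^\circ) = \rk(X) + \rk(G^\circ_x)$ reduces matters to bounding the rank of the point stabiliser $H := G^\circ_x$ in terms of $n$. The base case $n=0$ is immediate: $X$ is finite, $G$ embeds into $\symm(X)$, and $\rk(G)=0$.

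The next step is to examine the socle $S$ of $G^\circ$, taken as the product of the minimal nontrivial connected definable normal subgroups (well defined by the descending chain condition on definable subgroups), and to split according to its isomorphism type. If $S$ is a single nonabelian simple factor of \fmrd, then $G^\circ$ embeds into the definable automorphism group $\operatorname{Aut}(S)$; bounding $\rk(S)$ in terms of $\rk(X)$ using the fact that a faithful primitive action of a simple group of \fmr on $X$ forces stabilisers of controllable rank then bounds $\rk(G)$. If $S$ splits as a direct product of several simple factors, so that the action is of product or diagonal type, then $G^\circ$ embeds, up to a bounded correction, into a wreath product $K \wr \symm_r$ with $K$ primitive on a set of rank strictly less than $n$; the number $r$ of factors is bounded in terms of $n$, and the induction hypothesis applies to each component.

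The remaining --- and most delicate --- case is the affine one, where $S=V$ is a connected abelian definable normal subgroup acting regularly on $X$. Identifying $X$ with $V$, primitivity is equivalent to the \emph{irreducibility} of the $H$-action on $V$: there is no proper nontrivial definable connected $H$-invariant subgroup of $V$. At this point I would invoke Zilber's Field Theorem (in a version suitable for a potentially non-solvable connected $H$) to produce a definable algebraically closed field $K$, interpretable in the ambient structure, for which $V$ becomes a $K$-vector space of dimension $m$ with $m\cdot\rk(K)=n$, and on which $H$ acts $K$-linearly. This gives an embedding $H \hookrightarrow \operatorname{GL}_m(K)$, hence
\[
\rk(H) \;\leqslant\; m^2 \rk(K) \;=\; m \cdot n \;\leqslant\; n^2,
\]
and combining this with $\rk(X)=n$ closes the induction with a bound of the shape $f(n) = n + n^2 + O(1)$.

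I expect the main obstacle to lie squarely in this affine case. In its classical form, Zilber's Field Theorem is tailored to solvable $H$; extracting a field interpretation and a $K$-linear structure for general connected $H$, while simultaneously controlling the faithfulness of the induced action on $V$, is genuinely nontrivial, and is in fact the central thread running through the series of papers \cite{bbgeneric,bbpseudo,bbsharp,bbnotsharp,avb,bbsolvable} that the present survey describes.
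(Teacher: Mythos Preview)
The present paper does not give a proof of this theorem; it is quoted from \cite{borche}, and the only indication of the method is that \emph{multiple generic transitivity} is its core: writing $\rho(n)$ for the maximal rank of a definably primitive group on a set of rank $n$ and $\tau(n)$ for the maximal degree of generic transitivity of such an action, the paper records $n\tau(n)\leqslant\rho(n)\leqslant n\tau(n)+\binom{n}{2}$, so that bounding $\rho$ reduces to bounding $\tau$. Your plan does not mention generic transitivity at all and attempts instead a direct case analysis via Macpherson--Pillay plus a hoped-for linearisation in the affine case.

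That plan has two genuine gaps. In the \emph{almost simple} case you assert that ``a faithful primitive action of a simple group of \fmr on $X$ forces stabilisers of controllable rank''; but this is exactly the statement being proved, not an input to it. Since simple groups of \fmr are not known to be algebraic, there is no off-the-shelf bound on $\rk(S)$ in terms of $\rk(X)$; the actual argument in \cite{borche} for this case is substantial and passes through generic transitivity and descending chains of point stabilisers, not through an embedding into $\operatorname{Aut}(S)$. In the \emph{affine} case, the linearisation you invoke does not exist in the generality you need: there is no theorem turning an arbitrary faithful irreducible action of a connected $H$ of \fmr on a connected abelian $V$ into a $K$-linear one with $H\hookrightarrow\operatorname{GL}(V)$. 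Zilber's Field Theorem requires a solvable piece of $H$ acting nontrivially, and Loveys--Wagner (Fact~\ref{loveyswagner}) covers only torsion-free divisible $V$. You suggest that the series \cite{bbgeneric,bbpseudo,bbsharp,bbnotsharp,avb,bbsolvable} supplies the missing linearisation, but that is a misreading of what this survey describes: those papers do not linearise arbitrary irreducible actions on abelian groups; they identify $H\cong\operatorname{GL}_n(K)$ only under the much stronger extra hypothesis that $H$ is generically $n$-transitive on $V$ with $n\geqslant\rk(V)$. The existence of the bounding function $f$ in the affine case, and indeed in all cases, was already established in \cite{borche} before any of those papers were written, and by a different route.
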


In finite group theory, a good understanding of primitive permutation groups (and its profound consequences for combinatorics, representation theory, and model theory) became possible only as a result of the forbiddingly difficult CFSG. In the  finite Morley rank context, we expect that at least applications to model theory can be achieved using  powerful techniques already developed for the analysis  of simple groups of finite Morley rank.

Theorem \ref{th:Borovik-Cherlin} was the starting point and motivation for our research project described in this paper.

\subsection{Multiple generic transitivity} This concept played a central role in the proof of Theorem \ref{th:Borovik-Cherlin}.

 When $X$ has \fmrd,  a definable subset $A\subseteq X$ is called {\em generic} in $X$, if $\rk(X\smallsetminus A)<\rk(X)$.
	
Let $G$ be a group of \fmr acting definably on a set $X$ of \fmrd. If the induced action of $G$ on $X^n$ is (sharply) transitive on a generic subset  {of $X^n$}, then we say $G$ acts {\em generically \emph{(}sharply\emph{)} $n$-transitively} on $X$.

 For algebraic groups, this concept was introduced by Vladimir Popov \cite{popov} for connected algebraic groups $G$ acting  {rationally} on an irreducible  {algebraic} variety $V$: if the induced action of $G$ on $V^n$ is transitive on an open subset  {of $V^n$}, then the action is called {\em generically} $n$-transitive.

In characteristic $0$, Popov computed maximal generic transitivity degrees of actions of simple algebraic groups on irreducible algebraic varieties:

 \begin{fact}{\rm (Popov \cite{popov})} \label{fact:Popov} If the characteristic of the underlying field is $0$, then simple algebraic groups have the following maximal degrees of generic transitivity:
\[\begin{array}{c|c|c|c|c|c|c|c|c}
A_n & B_n, n\geq 3 & C_n, n\geq 2 & D_n, n\geq 4 & E_6 & E_7 & E_8 & F_4 & G_2\\\hline
n+2 & 3 & 3 & 3 & 4 & 3 & 2 & 2 & 2
\end{array}\]
 \end{fact}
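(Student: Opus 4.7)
My plan is to convert the question into a statement about descending chains of generic stabilisers and then to perform a case-by-case verification organised by Dynkin type. First observe that $G$ acts generically $n$-transitively on an irreducible variety $V$ exactly when the diagonal action on $V^n$ has a dense orbit. Assume $G$ already acts with a dense orbit on $V$, so that (restricting to this orbit) $V = G/H$ for the generic stabiliser $H$. The key inductive identity, obtained by fixing the coset $eH$ in the first factor, is that $G$ is generically $n$-transitive on $G/H$ if and only if $H$ is generically $(n-1)$-transitive on $G/H$. Iterating gives a chain $G \supseteq H_1 \supseteq H_2 \supseteq \cdots$ of generic stabilisers of tuples of growing length, each with a dense orbit on $V$; the maximal generic transitivity equals the length of this chain.

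For the lower bounds, each entry in the table should be realised by a familiar homogeneous space. For type $A_n$, the classical projective-frame theorem says that $\mathrm{PGL}_{n+1}$ is simply transitive on projective frames in $\mathbb{P}^n$, giving generically sharp $(n+2)$-transitivity. For types $B_n$, $C_n$, $D_n$ the natural action on a projective quadric realises $3$-transitivity. The exceptional entries are realised on suitable minuscule or adjoint varieties whose construction goes back to Chevalley, Tits, and Freudenthal, for example the Cayley plane for $E_6$.

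For the upper bounds, which form the substantive part, I would exploit the characteristic-zero reductivity of generic stabilisers of reductive group actions (Richardson, Luna) to keep the induction within reductive groups. The crude dimension inequality $\dim G \geq n \dim V$ already excludes most candidates for large $n$; for the survivors, one checks by hand whether the residual reductive subgroup $H_k$ still has a dense orbit on $V$, using the structure theory of parabolic subgroups for classical types and the Dynkin--Seitz classification of maximal subgroups together with orbit calculations on small highest-weight representations for the exceptional types.

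The \emph{main obstacle} is the exceptional-type enumeration, where certain cases cannot be ruled out by dimension alone and require tailored invariant-theoretic arguments to witness a second generic orbit. The characteristic zero hypothesis is essential throughout: it guarantees reductivity of generic stabilisers, complete reducibility of the representations in play, and the existence of the geometric quotients needed to make the reduction from $V^n$ to $V^{n-1}$ precise; in positive characteristic all three of these ingredients can fail, which is why the table must be restated with caveats outside characteristic $0$.
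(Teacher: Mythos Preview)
The paper does not prove this statement at all: it is recorded as a \emph{Fact} attributed to Popov and used purely as background motivation, with no argument supplied or even sketched. So there is nothing in the paper to compare your proposal against.

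As an outline of Popov's own argument your sketch is broadly on target: the reduction to a descending chain of generic stabilisers, the use of reductivity of generic isotropy in characteristic~$0$ (Richardson, Luna), the dimension inequality $\dim G \geq n\dim V$, and the type-by-type bookkeeping are indeed the ingredients. Two remarks. First, a concrete slip: for type $C_n$ there is no invariant quadric; the symplectic group $\mathrm{Sp}_{2n}$ preserves an alternating form, not a symmetric one, and the generic $3$-transitivity is realised on $\mathbb{P}^{2n-1}$ directly, with orbit separation on pairs and triples governed by the values of the symplectic form rather than by a quadric hypersurface. Second, the phrase ``tailored invariant-theoretic arguments'' for the exceptional types is hiding essentially all of the work in those cases; nothing you have written would let a reader reconstruct, say, why $E_6$ reaches $4$ while $E_7$ stops at $3$. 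That is not a gap in the sense of a wrong step, but you should be aware that what you have is a table of contents for a proof rather than a proof.
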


\begin{examples} \cite{borche} \label{ex:multiple-generic-transitivity}
 For every $m\geqslant 1$ and \acf $K$, the natural action of:
\begin{compactenum}
	\item  $(K^*)^n$ as  {a group of diagonal matrices  on the vector space $K^n$} is generically sharply  transitive.

    \item {The special linear group} $\operatorname{SL}_n(K)$ on  {the vector space} $K^n$ is generically (but not sharply) $(n-1)$-transitive.

	\item  {The general linear group} $\operatorname{GL}_n(K)$ on  {the vector space} $K^n$ is generically sharply $n$-transitive.

    \item  {The affine special linear group}  {$\operatorname{ASL}_n(K) = K^n \rtimes \operatorname{SL}_n(K)$} on $K^n$ is generically (but not sharply)  $n$-transitive.
	
	\item  {The affine general linear group}  {$\operatorname{AGL}_n(K) = K^n \rtimes \operatorname{GL}_n(K)$} on $K^n$ is generically sharply $(n+1)$-transitive.
	
	\item  {The projective general linear group}  $\operatorname{PGL}_{n+1}(K)$ on  {the projective space} $\mathcal{P}_{n}(K)$ is generically sharply $(n+2)$-transitive.
	
\end{compactenum}
\end{examples}

Notice that in cases (1), (2), and (3) the actions are not primitive because they fix $0 \in K^n$ hence not transitive.  After being restricted to $K^n \smallsetminus \{0\}$, these actions are still not primitive because the orbits of the group of scalars provide an invariant partition. Actions (4), (5), and (6) are primitive.

Adopting the notation from \cite{borche}, denote by $\rho(n)$ the maximum possible rank of a group acting definably primitively on a set of Morley rank $n$, and by $\tau(n)$ the maximum possible degree of generic transitivity of a definably primitive action of a group on a set of Morley rank $n$.

The following Proposition adapted from \cite[Proposition 2.3]{borche}  explains the role of multiple generic transitivity in estimates for ranks of primitive groups.
	
\begin{proposition} In the above setting, we have $$n\tau(n)\leqslant\rho(n)\leqslant n\tau(n)+{n\choose 2}.$$
\end{proposition}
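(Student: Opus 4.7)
The plan is to prove the two inequalities separately, with the lower bound essentially free from the rank formula and the upper bound requiring a careful stabiliser-chain analysis.

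For the lower bound $n\tau(n)\leq\rho(n)$, I would fix any primitive action $(G,X)$ with $\rk(X)=n$ realising the maximum generic transitivity degree $t:=\tau(n)$. For a generic tuple $(b_1,\dots,b_t)\in X^t$, the generic $t$-transitivity gives $\rk\bigl(G\cdot(b_1,\dots,b_t)\bigr)=nt$, and the surjective orbit map $g\mapsto(b_1^g,\dots,b_t^g)$ from $G$ to this orbit forces $\rk(G)\geq nt$, so $\rho(n)\geq n\tau(n)$.

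For the upper bound, I would take an arbitrary definably primitive $(G,X)$ with $\rk(X)=n$ and generic transitivity degree $t$ (so $t\leq\tau(n)$), and aim for $\rk(G)\leq nt+\binom{n}{2}$. Pick a generic independent sequence $b_1,b_2,\ldots\in X$ and set $H_0=G$, $H_i=G_{b_1,\dots,b_i}$, and $r_i=\rk(H_{i-1}\cdot b_i)$, the rank of the generic orbit of $H_{i-1}$ on $X$. Applying the rank formula to the transitive action of $H_{i-1}$ on its generic orbit gives $\rk(H_{i-1})=r_i+\rk(H_i)$; the descending chain condition on definable subgroups makes $H_m$ finite for some $m$, and telescoping yields
\[
\rk(G) = r_1+r_2+\cdots+r_m.
\]
Since $H_i\leq H_{i-1}$, the sequence $(r_i)$ is non-increasing with $r_i\leq n$. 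Generic $t$-transitivity forces $r_1=\cdots=r_t=n$, and the failure of $(t+1)$-transitivity forces $r_{t+1}\leq n-1$.

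The crucial remaining step is to show that the tail $r_{t+1},r_{t+2},\ldots$ is \emph{strictly} decreasing until it hits $0$: whenever $H_{i-1}$ has generic orbit of rank $r<n$ on $X$, then $H_i$ has generic orbit of rank at most $r-1$. Granting this, the nonzero tail entries are distinct positive integers bounded by $n-1$, so they sum to at most $(n-1)+(n-2)+\cdots+1=\binom{n}{2}$, giving $\rk(G)\leq nt+\binom{n}{2}\leq n\tau(n)+\binom{n}{2}$. This strict-decrease lemma is the main obstacle: computing the $H_{i-1}$-orbit on $X\times X$ shows that the equality $r_{i+1}=r_i<n$ is equivalent to $H_{i-1}$ acting generically $2$-transitively on its generic orbit $Y\subsetneq X$, a very restrictive condition which I would rule out by combining the primitivity of $(G,X)$ with the fact that $H_{i-1}$ is the pointwise stabiliser of generic points in $G$.
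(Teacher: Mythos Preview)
The paper does not actually prove this proposition; it is quoted (as ``adapted from \cite[Proposition~2.3]{borche}'') purely for motivation, so there is no in-text argument to compare yours against. That said, here is an assessment of your attempt on its own merits.

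Your lower bound is correct and is the standard orbit-rank computation.

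The upper bound has a real gap exactly where you flag ``the main obstacle.'' The strict-decrease claim --- that once $r_i<n$ one must have $r_{i+1}<r_i$ --- is neither proved nor correctly reduced. Your proposed reformulation, that $r_{i+1}=r_i<n$ is equivalent to $H_{i-1}$ acting generically $2$-transitively on ``its generic orbit $Y\subsetneq X$'', does not hold: once $r_i<n$ there is no single generic orbit~$Y$. The $H_{i-1}$-orbits of rank $r_i$ cover a generic subset of $X$, but each individual orbit has rank $<n=\rk X$, so independent generic points $b_i$ and $b_{i+1}$ of $X$ necessarily lie in \emph{different} $H_{i-1}$-orbits. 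Thus $r_{i+1}=r_i$ only says that the point stabiliser $(H_{i-1})_{b_i}$ still has a rank-$r_i$ orbit through a generic point of $X$ sitting in some \emph{other} $H_{i-1}$-orbit; this is not a $2$-transitivity statement on any fixed $Y$, and your ``rule it out by primitivity'' sentence is not an argument.

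Worse, Example~\ref{ex:unlimited} in the paper is precisely a faithful action of $K^{\oplus m}$ on $\mathbb{A}^2$ for which one computes $r_1=r_2=\cdots=r_m=1$: strict decrease fails for $m$ consecutive steps. So any correct proof of the upper bound must invoke the primitivity of the ambient $G$ in an essential way to constrain the orbit structure of the iterated stabilisers $H_i$, and you have not supplied that mechanism. Until it is, the inequality $\rho(n)\leqslant n\tau(n)+\binom{n}{2}$ remains unproven in your write-up.
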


Examples \ref{ex:multiple-generic-transitivity}  suggest a tight bound for the degree of generic multiple transitivity:

\begin{conjecture} \emph{\cite[Conjecture 2]{bordel}}
\[
\tau(n) = n+2.
\]
\end{conjecture}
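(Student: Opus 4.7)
The plan is to bootstrap the Main Theorem (which handles the affine case) to the general primitive case, using the fact that $\operatorname{PGL}_{n+1}(K)$ on $\mathbb{P}_n(K)$ reduces to $\operatorname{AGL}_n(K)$ on $\mathbb{A}_n(K)$ after removing a hyperplane. The lower bound $\tau(n)\geqslant n+2$ is already witnessed by Examples \ref{ex:multiple-generic-transitivity}(6), so everything comes down to the matching upper bound.

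Let $(G,X)$ be a connected definably primitive permutation group of \fmr with $\rk(X)=n$, acting generically $t$-transitively; I want to show $t\leqslant n+2$. The first step is a dichotomy based on the socle: let $N$ be a minimal connected definable normal subgroup of $G$. If $N$ is abelian, then by primitivity $N$ acts regularly and $(G,X)$ is of affine type, so the Main Theorem gives $t\leqslant n+1$, which is stronger than needed. The remaining case is when every minimal connected normal subgroup of $G$ is non-abelian --- I would use an O'Nan--Scott/Aschbacher-style reduction in the spirit of Macpherson--Pillay \cite{Macpherson-Pillay95}, together with the classification results from \cite{abc} and the odd-type techniques of \cite{BoBu,BBC-ID,Deloro-Jaligot}, to peel off the product/diagonal/twisted sub-cases and be left with the genuinely ``almost simple'' situation.

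In the almost simple case, the strategy is to construct a definable ``hyperplane'' $H\subseteq X$ whose setwise stabiliser $G_H$ acts on $X\smallsetminus H$ primitively, of affine type, and generically $(t-1)$-transitively on a set of rank $n$. Applying the Main Theorem to this subaction then identifies $G_H$ as $\operatorname{AGL}_n(K)$ for some definable \acf $K$, and forces $t-1\leqslant n+1$, hence $t\leqslant n+2$. When equality $t=n+2$ occurs, one hopes to recover $G\simeq\operatorname{PGL}_{n+1}(K)$ acting on $\mathcal{P}_n(K)$ by generation arguments, gluing the affine chart to its translates under elements of $G$ outside $G_H$.

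The main obstacle is clearly the construction of the definable ``hyperplane'' $H$, or equivalently the definable interpretation of the projective structure $\mathcal{P}_n(K)$ inside $(G,X)$. In the affine theorem, the field $K$ is extracted from a given abelian regular normal subgroup $V$, but in the projective case no such subgroup is available and $K$ must be interpreted from scratch. This would require Zilber-style trichotomy methods and the theory of (very) ample families of definable plane curves to manufacture $K$ from the geometry of generic $t$-tuples, then a delicate argument to show that the configuration of generic fixed points of one-parameter subgroups organises itself into a projective incidence geometry. Once a single definable hyperplane is produced, the rest of the argument --- reduction to affine type, invocation of the Main Theorem, recognition of $\operatorname{PGL}_{n+1}(K)$ --- should follow the template above; but producing that hyperplane is the essential difficulty, and it is why the conjecture remains open even with the Main Theorem in hand.
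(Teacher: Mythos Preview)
The statement you are addressing is recorded in the paper as a \emph{conjecture}, not a theorem; the paper offers no proof of it, and your closing paragraph correctly acknowledges that it remains open. So there is no ``paper's own proof'' to compare against, and your proposal should be read as a strategy sketch rather than a proof.

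That said, your strategy has a structural gap that the paper itself isolates. Your dichotomy is: if a minimal connected normal subgroup $N$ is abelian, the Main Theorem applies; otherwise, an O'Nan--Scott/Macpherson--Pillay reduction ``peels off the product/diagonal/twisted sub-cases'' and leaves the almost simple case. But the Macpherson--Pillay classification (Section~\ref{sec:Macpherson-Pillay}) has \emph{four} types, and one of the non-abelian-socle types is the \emph{regular type}: $N$ simple and acting regularly on $X$. This is neither affine nor almost simple nor diagonal, and it cannot be ``peeled off'' by the techniques you invoke. The paper devotes Conjectures~\ref{conj:definable-on-simple} and~\ref{conj:no-regular-type} and the whole of Section~\ref{subsec:approaches-to-regular} to precisely this obstruction, stating explicitly that the non-existence of primitive groups of regular type is ``still wide open'' and would follow from the full Cherlin--Zilber Conjecture. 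Your sketch silently assumes this away.

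Even granting the elimination of regular and diagonal types, your almost-simple reduction is more optimistic than the paper. You propose to manufacture a definable hyperplane $H\subseteq X$ so that $G_H$ acts on $X\smallsetminus H$ as a primitive group of affine type, then invoke the Main Theorem. You correctly flag the construction of $H$ (equivalently, the interpretation of an \acf $K$ and a projective structure on $X$) as ``the essential difficulty''; the paper does not attempt this at all, and instead frames the almost-simple case via Conjecture~\ref{conj:general bounds} and Popov's bounds (Fact~\ref{fact:Popov}) for algebraic groups. In short: your outline is a reasonable heuristic for why one expects $\tau(n)=n+2$, but the two genuine obstacles --- regular type and field interpretation in the almost-simple case --- are exactly the points where the paper stops and records open problems rather than proofs.
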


\subsection{The Macpherson-Pillay classification  of primitive groups of \fmr} 
\label{sec:Macpherson-Pillay}
Dugald Macpherson and Anand Pillay \cite{Macpherson-Pillay95} gave a general structural classification of primitive permutation groups of \fmrd, an analogue of the O'Nan-Scott theorem for finite groups \cite{Liebeck-Praeger-Saxl1988}. We formulate it here only for the most important case of  \emph{connected} primitive groups $G$.

\begin{fact} \emph{(Macpherson and Pillay} \cite{Macpherson-Pillay95}\emph{)} Let\/ $(G,X)$ be a connected
definably primitive permutation group of  \fmrd. Then one of the following holds:

\bi

\item[\emph{(1)}] $(G,X)$ is of \emph{affine type}, that is, contains a definable normal \emph{abelian} subgroup $S$ which acts on $X$ regularly. Moreover, $S$ is either an elementary abelian $p$-group for some prime $p$, or a torsion-free divisible group.

\item[\emph{(2)}]   $(G,X)$ is of \emph{regular type}, that is, contains a definable normal \emph{simple} subgroup $S$ which acts on $X$ regularly.

\item[\emph{(3)}] $G$ is \emph{almost simple}, that is, contains a definable normal \emph{simple} subgroup $T$, $T\leqslant G \leqslant \mathop{Aut} T$, and the stabiliser of a point $x\in X$ is non-trivial, $T_x \ne 1$.

\item[\emph{(4)}] $G\simeq T\times T$, the direct  product of two copies of a simple group $T$, and $(G,X)$ is equivalent to the action of $G$ on the coset space $G/D$ of the diagonal subgroup $D = \{\, (t,t), \; t \in T\,\}$.
\ei

\end{fact}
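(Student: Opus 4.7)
The plan is to reproduce the O'Nan-Scott reduction for finite primitive groups inside the definable category. Two inputs drive the analysis: (i) in any faithful primitive action, every non-trivial definable connected normal subgroup of $G$ must be transitive on $X$, since otherwise its orbit partition would give a non-trivial $G$-invariant equivalence relation on $X$; and (ii) a minimal connected definable normal subgroup $N$ of a connected group of finite Morley rank is either abelian (and, by further structural analysis, an elementary abelian $p$-group or a torsion-free divisible group) or a central product $T_{1}\times\cdots\times T_{k}$ of pairwise isomorphic definably simple groups. Since $G$ is connected and the set $\{T_{1},\ldots,T_{k}\}$ is finite, $G$ cannot permute these factors non-trivially, so each $T_{i}$ is itself $G$-normal; combined with the minimality of $N$, in the non-abelian case we may assume $N$ is a single simple factor $T$.

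The case split is then driven by (a) whether $N$ is abelian or non-abelian and, in the latter situation, (b) whether the centraliser $C:=C_{G}(N)$ is trivial. If $N$ is abelian and transitive, the commutation identity $n\cdot(mx)=m\cdot(nx)=mx$ for $n\in N_{x}$ and $m\in N$ shows that $n$ fixes every point, so $N_{x}=1$ by faithfulness; hence $N$ is regular and case (1) follows once one invokes the structural dichotomy giving the elementary abelian or torsion-free divisible shape. If $N$ is non-abelian and $C=1$, then conjugation embeds $G$ into $\operatorname{Aut}(N)$; splitting on whether $N_{x}=1$ distinguishes case (2) (regular) from case (3) (almost simple with non-trivial point stabiliser). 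The remaining possibility is that $N$ is non-abelian and $C$ is a non-trivial definable normal subgroup of $G$, hence itself transitive; the standard fact that two commuting transitive subgroups must each be semiregular then gives that $C$ is in fact regular. A further argument identifies $C$ as simple and isomorphic to $N$, yielding $G=N\times C$, and primitivity forces the point stabiliser---a subgroup of $N\times C$ projecting onto both factors with trivial kernel in each---to be the graph of an isomorphism $N\to C$, i.e.\ a diagonal. This is case (4).

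The main obstacle lies in this last branch: in the finite O'Nan-Scott theorem it is precisely the case of non-trivial centraliser where diagonal, product-action, and twisted-wreath sub-cases proliferate, and we must show that in the connected definable setting only the bare diagonal survives. This will require combining the semiregularity of $C$, the faithfulness and primitivity of the action, and a structural argument ruling out additional proper definable normal subgroups strictly between $N\times C$ and $G$, in order to pin down $G\simeq T\times T$ and the identification of $X$ with $(T\times T)/D$. The remaining work---the commutation argument for the abelian case, the embedding into $\operatorname{Aut}(N)$ for the almost simple case, and the dichotomy between $p$-groups and torsion-free divisible groups---should be comparatively routine once the structure theorem for minimal connected definable normal subgroups is in place.
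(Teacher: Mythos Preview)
The paper does not give its own proof of this statement: it is recorded as a \textbf{Fact}, attributed to Macpherson and Pillay \cite{Macpherson-Pillay95}, and quoted without argument. There is therefore nothing in the paper to compare your proposal against.

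For what it is worth, your outline is a faithful sketch of the original Macpherson--Pillay argument. The reductions you identify---transitivity of any non-trivial definable normal subgroup by primitivity, the structure of a minimal connected definable normal subgroup $N$ as either abelian (with the elementary abelian~$p$ / torsion-free divisible dichotomy) or a direct product of isomorphic simple factors, and the observation that connectedness of $G$ forbids a non-trivial permutation of those factors so that in the non-abelian case $N$ is itself simple---are precisely the engine of the proof in \cite{Macpherson-Pillay95}. Your case split on whether $C_{G}(N)$ is trivial is also the right one, and you correctly flag the diagonal case as the place where the work concentrates. One step you pass over a little quickly is the conclusion $G = N \times C$ rather than merely $N \times C \leqslant G$: this uses that the connected group $G/(N C)$ embeds in the outer automorphism group of the simple group $N$, and one must argue that this forces $G = NC$. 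With that supplied, your plan matches the published proof.
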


\subsection{What do primitive groups of affine type and regular type have in common?} \label{sec:common}

Let $(G,X)$ be a permutation group of one of these types. In the both cases,  $G$ contains a definable normal subgroup $S$  whose action on $X$ is sharply transitive (regular). If $M$ is a point stabiliser in $G$ of some $x \in X$, then $S \cap M=1$. Since the action is definably primitive, $M$ is a maximal definable subgroup in $G$, and therefore
\begin{quote}
\emph{the action of $M$ on $S$ by conjugation does not leave  any proper nontrivial definable subgroup $P$ of $S$ invariant,}
\end{quote}
for otherwise $M < PM < G$. It is natural to say in this situation that $S$ acts on $S$ \emph{irreducibly}. Obviously,  the action of a simple group $S$ on itself by conjugation is irreducible. Another obvious example of an irreducible action on a simple group $S$ is provided by a group $M = S \times L$ where $L$ is an arbitrary group acting on $S$ trivially, $[S,L] =1$. To take this situation into account we say that a group $M$ acts on a group $S$\emph{ faithfully} if $C_M(S) =1$.

It is easy to see that if $M$ acts on $S$ irreducibly then the action of $G = S\rtimes M$ on cosets of $M$ is definably primitive, and $(G,X)$ is equivalent to $(G, G/M)$. In particular, the action of $M$ on $X$ is equivalent to the action of $M$ on $S$. In particular, if $(G,X)$ is generically $n$-transitive, then $(M,S)$ is generically $(n-1)$-transitive.

An irreducible action of $M$ on $S$ imposes on $S$ severe  structural limitations.

\begin{lemma}[\cite{Macpherson-Pillay95}]  \label{lm:trichotomy-for-irreducible-actions}
If $M$ and $S$ are connected groups of finite Morley rank and $M$ acts on $S$ irreducibly, then {$S$} is either an abelian torsion free group, or an elementary abelian $p$-group, or simple.
\end{lemma}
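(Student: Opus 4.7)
The strategy is to exhibit enough $M$-invariant definable subgroups of $S$ to force, via the irreducibility hypothesis, either a very restrictive abelian structure or outright simplicity.  The key observation, used repeatedly, is that any $M$-invariant definable subgroup of $S$ must be either trivial or the whole of $S$; this applies in particular to the standard ``characteristic-like'' constructions such as $Z(S)$, $[S,S]$, the subgroups $pS$ and $S[p]$ for each prime $p$, and the definable hull of an $M$-orbit of a connected normal subgroup.

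First, $Z(S)$ is characteristic in $S$, hence $M$-invariant and definable, so $Z(S)\in\{1,S\}$.  If $Z(S)=S$, so that $S$ is abelian (and connected), then for each prime $p$ both $pS$ and the $p$-torsion $S[p]$ are $M$-invariant definable subgroups, hence each is trivial or equal to $S$.  If $S[p]=S$ for some $p$, then automatically $pS=\{1\}$ and $S$ is an elementary abelian $p$-group; if $S[p]=\{1\}$ for every prime $p$, then $S$ is torsion-free.  This settles the abelian branches of the trichotomy.

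Now assume $Z(S)=\{1\}$, so $S$ is non-abelian.  The commutator subgroup $[S,S]$ is definable (in a connected group of \fmr the derived subgroup is definable), connected, and $M$-invariant, so $[S,S]=S$.  Using the descending chain condition, pick a minimal nontrivial connected definable normal subgroup $N$ of $S$.  I would first rule out $N$ being abelian: if it were, then for distinct $M$-conjugates $N^{m}\ne N^{m'}$ the connected component $(N^{m}\cap N^{m'})^{\circ}$ is a connected definable normal subgroup of $S$ strictly smaller than $N^{m}$, hence trivial; the commutator $[N^{m},N^{m'}]$ is connected and lies in this intersection, so $N^{m}$ and $N^{m'}$ commute; by Zilber indecomposability $\langle N^{M}\rangle$ is a definable, connected, $M$-invariant normal subgroup of $S$, hence equals $S$, and being generated by pairwise-commuting abelian subgroups is itself abelian, contradicting non-abelianness.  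So $N$ is non-abelian, and the same minimality argument applied to $Z(N)$ and $[N,N]$ gives $Z(N)=\{1\}$ and $[N,N]=N$.

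Finally, list the distinct $M$-conjugates $N_{1}=N,\dots ,N_{k}$ of $N$.  As above, distinct $N_{i}$ commute, and the triviality of $Z(N_{i})$ forces $N_{i}\cap\prod_{j\ne i}N_{j}=\{1\}$, so $\langle N^{M}\rangle=N_{1}\times\cdots\times N_{k}$ equals $S$; rank additivity $\rk(S)=k\cdot\rk(N)$ forces $k$ to be finite.  Now $M$ permutes the finite set $\{N_{1},\dots ,N_{k}\}$, yielding a homomorphism with definable kernel of finite index in the connected group $M$, so the kernel is all of $M$; hence each $N_{i}$ is $M$-invariant and, by irreducibility, equal to $S$.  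Thus $k=1$ and $N=S$, so $S$ has no proper nontrivial connected definable normal subgroup; together with $Z(S)=\{1\}$ (which kills finite normal subgroups of a connected group), this says $S$ is simple in the sense of \fmrd, hence simple as an abstract group by the principle recalled in the introduction.  I expect the main obstacle to be this final step --- collapsing the $M$-orbit of the chosen minimal $N$ to a singleton; it is here that the connectedness of $M$ is essential.
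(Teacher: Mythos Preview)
The paper does not supply a proof of this lemma; it is quoted from Macpherson--Pillay without argument, so there is nothing in the paper to compare against.  Your proposal is essentially the standard socle--type argument and is correct.

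Two places deserve a slightly tighter formulation.  First, when you say ``the same minimality argument \dots gives $Z(N)=1$'': minimality of $N$ among \emph{connected} definable normal subgroups only gives $Z(N)^{\circ}=1$, i.e.\ $Z(N)$ is finite.  One then needs the extra observation (which you do invoke later, for $S$) that a finite normal subgroup of a connected group is central, whence $Z(N)\le Z(S)=1$.  So the conclusion is right; only the justification is slightly out of order.

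Second, when you ``list the distinct $M$-conjugates $N_{1},\dots ,N_{k}$'' and then invoke rank additivity to say $k$ is finite, the finiteness really has to be established before you can speak of such a list.  The clean way is: Zilber indecomposability gives $S=\langle N^{M}\rangle=N^{m_{1}}\cdots N^{m_{l}}$ for finitely many $m_{i}$; any further conjugate $N^{m}$ distinct from all of these would commute with each $N^{m_{i}}$ (by your minimality argument), hence centralise $S$, hence lie in $Z(S)=1$.  Thus the $M$-orbit of $N$ is genuinely finite, and your connectedness-of-$M$ argument then collapses it to a single point.  With these cosmetic adjustments the proof stands.
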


The following configuration provides a key for unlocking the structure of the group $M$ if it acts on $S$ generically $n$-transitively.

Let $A$ be the generic subset of $S^n$ on which $M$ acts transitively. Then one can show that
$A$ is closed under taking inverses and permuting coordinates. We temporarily use the additive notation for the operation in $S$.

Consider  any  $(a_1,\ldots,a_n)$ in $A$. If one of the $a_i$'s is an involution, that is, $a_i =-a_i$, then its orbit under the action of $M$ is generic in $S$, and in that case an old classic lemma says that the connected group $S$ is an elementary abelian $2$-group (a more general statement is \cite[Proposition 1.2]{BBC-ID}). Leaving this special case aside, we see that  $(\pm a_1,\ldots,\pm a_n)$ and
$(a_{\sigma(1)},\ldots,a_{\sigma(n)})$  lie in $A$ for every $\sigma\in \symm_n$. By transitivity of the action, for each $1\leqslant i\leqslant n$ and $\sigma\in \symm_n$, there exist elements $e_i\in m$ and $h_\sigma\in M$ satisfying
$e_i(a_1,\ldots,a_n)=(a_1,\ldots,-a_i,\ldots, a_n)$,
$h_\sigma(a_1,\ldots,a_n)=(a_{\sigma(1)},\ldots,a_{\sigma(n)})$.

Set $H$ to be the setwise and $K$ be the pointwise stabilizer in $M$ of the set $\{\pm a_1,\ldots,\pm a_n\}$. Then $K\unlhd H$, and the following observation is easy.

\begin{lemma} \label{lm:existence-of-hyperoctahedral}  If $S$ is not an elementary abelian $2$-group, $H/K$ is isomorphic to the hyperoctahedral group $\symm_n\ltimes\mathbb Z_2^n$. In particular, if the action of $M$ on $S$ is  generically sharply
$n$-transitive, then $K=1$ and $H \simeq \symm_n\ltimes\mathbb Z_2^n$.

If $S$ is an abelian $2$-group then $H \simeq \symm_n$. \label{hyperoctahedral} \end{lemma}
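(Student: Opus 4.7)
The plan is to realise $H/K$ as a subgroup of $\symm(\Omega)$, where $\Omega = \{\pm a_1,\ldots, \pm a_n\}$, and to identify it with the stabiliser of a natural pairing on $\Omega$, which is precisely the hyperoctahedral group $\symm_n \ltimes \mathbb{Z}_2^n$.

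First I would check that the $2n$ elements $\pm a_i$ are pairwise distinct. Since no $a_i$ is an involution, $a_i \neq -a_i$; and for $i \neq j$, each of the equalities $a_i = a_j$ and $a_i = -a_j$ defines a proper definable subset of $S^n$ of rank strictly less than $n\cdot\rk(S)$, so these relations fail at a generic point of $A$. Replacing $(a_1,\ldots,a_n)$ by such a generic point, we may assume $|\Omega| = 2n$.

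Next, consider the natural homomorphism $\phi : H \to \symm(\Omega)$ that records the action of each $h \in H$ on $\Omega$; its kernel is exactly $K$. Because $M$ acts on $S$ by group automorphisms (conjugation), $h(-x) = -h(x)$ for every $h \in M$ and $x \in S$, so $\phi(H)$ preserves the pairing $\mathcal{P} = \{\{a_i,-a_i\}: 1\leqslant i \leqslant n\}$. The stabiliser of $\mathcal{P}$ in $\symm(\Omega)$ is the signed permutation group $\symm_n \ltimes \mathbb{Z}_2^n$ of order $2^n n!$, so $\phi$ yields an embedding $H/K \hookrightarrow \symm_n \ltimes \mathbb{Z}_2^n$. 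Surjectivity is witnessed by the already-constructed elements: $\phi(e_i)$ is the transposition $a_i \leftrightarrow -a_i$ (a sign change on the $i$-th pair), while $\phi(h_\sigma)$ permutes the pairs according to $\sigma$. Since sign changes on individual pairs together with permutations of the pairs generate the full hyperoctahedral group, we conclude $H/K \simeq \symm_n \ltimes \mathbb{Z}_2^n$.

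The two refinements follow at once. If the action of $M$ on $S$ is generically sharply $n$-transitive then the pointwise stabiliser of $(a_1,\ldots,a_n)$ in $M$ is trivial; since $K$ is contained in it, $K = 1$ and hence $H \simeq \symm_n \ltimes \mathbb{Z}_2^n$. If $S$ is an abelian $2$-group then $-a_i = a_i$, so $\Omega$ collapses to $\{a_1, \ldots, a_n\}$ of size $n$ and the pairing becomes trivial; the same argument using only the $h_\sigma$'s then yields $H/K \simeq \symm_n$. I expect the main technical subtlety to be the distinctness step: it relies essentially on $A$ having full rank $n\cdot\rk(S)$ in $S^n$ and on $S$ being connected, so that the collisions $a_i = \pm a_j$ carve out strictly smaller-rank definable subsets --- a tacit but essential use of the genericity of the orbit $A$.
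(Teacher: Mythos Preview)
Your argument is correct and is precisely the natural unpacking of what the paper calls ``easy'': the paper gives no proof beyond the preceding setup (constructing the $e_i$ and $h_\sigma$, and defining $H$ and $K$), so your proposal simply supplies the routine details the authors omitted. One minor remark: in the abelian $2$-group case you conclude $H/K \simeq \symm_n$, whereas the paper writes $H \simeq \symm_n$; your version is the correct one in general (the paper's phrasing tacitly drops the quotient), since without sharpness $K$ need not be trivial.
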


\subsection{Why are primitive groups of affine type and regular type so different?} Because the latter could happen to be non-existent.

\begin{conjecture} \label{conj:definable-on-simple}
A connected group $M$  of   \fmrd\  cannot act  faithfully and irreducibly on a simple group {$S$} of \fmrd\ unless $M=S$ acting on itself by conjugation.
\end{conjecture}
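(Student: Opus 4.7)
The plan is to show that any faithful irreducible action of a connected $M$ of \fmr\ on a simple group $S$ of \fmr\ embeds $M$ as the inner automorphism group $\operatorname{Inn}(S)\cong S$, and then to close via irreducibility.

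First, I would identify $S$ with a simple algebraic group $G(K)$ over an \acf\ $K$. In even type this is a theorem \cite{abc}; in odd and degenerate type it is conditional on Conjecture \ref{conj:Cherlin-Zilber}. With $S=G(K)$ in hand, the faithful action yields a definable embedding $M\hookrightarrow \operatorname{Aut}(S)\simeq \operatorname{Inn}(S)\rtimes \Gamma$, where $\Gamma$ is an extension of a finite graph-automorphism group by the abstract field automorphisms of $K$. The crucial ingredient is a \fmr\ analogue of Borel--Tits rigidity: a connected definable group of abstract automorphisms of $G(K)$ cannot induce nontrivial field automorphisms of $K$, because any connected positive-rank definable group of field automorphisms of an \acf\ in a definable universe is trivial. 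Combined with the finiteness of the graph-automorphism part and the connectedness of $M$, this forces $M\leqslant \operatorname{Inn}(S)\cong S$.

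Once $M$ sits inside $S$ acting by conjugation, $M$ itself is a definable $M$-invariant subgroup of $S$, so irreducibility forces $M=1$ or $M=S$; the former is incompatible with the existence of proper nontrivial definable subgroups of $S$ (Borel subgroups, for instance, are automatically $M$-invariant when $M=1$). Hence $M=S$, acting on itself by conjugation.

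The principal obstruction is the first step. In odd type one might hope to bypass the full Cherlin--Zilber conjecture by recovering a definable field directly from the orbit structure of the $M$-action on $S$, in the spirit of Zilber's field-configuration theorem, exploiting how structurally constraining the irreducibility hypothesis is. Degenerate type, by contrast, appears as deep as Cherlin--Zilber itself and would probably require substantial advances along the lines of Fr\'{e}con's work \cite{Frecon2018}.
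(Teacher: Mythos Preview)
The statement is Conjecture~\ref{conj:definable-on-simple}, which the paper presents as an \emph{open conjecture}, not a theorem. There is no proof in the paper to compare against: the paper says only that the statement ``would follow from the Cherlin--Zilber Algebraicity Conjecture'' and, after Conjecture~\ref{conj:no-regular-type}, that it ``is still wide open'', before listing exploratory approaches in Section~\ref{sec:COnclusions}.

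Your proposal is therefore not a proof but a sketch of the implication Cherlin--Zilber $\Rightarrow$ Conjecture~\ref{conj:definable-on-simple}, which is precisely the one-line remark the paper already makes. Conditionally, your outline is essentially sound: once $S$ is algebraic, connectedness of $M$ kills the finite graph-automorphism part and (with more care than you indicate---the interpreted field need not be pure in an arbitrary \fmr universe) the field-automorphism part, so $M\hookrightarrow\operatorname{Inn}(S)\cong S$, and then irreducibility forces $M=S$. But you yourself flag the first step as ``the principal obstruction'', and your closing paragraph is speculation, not argument. The paper's Section~\ref{sec:COnclusions} proposes a different and more concrete set of exploratory directions---attacking bad groups via their disjoint Borel structure and absence of involutive automorphisms (Problem~\ref{prob:noirreducibleonbad}), then degenerate-type groups, and separately bounding the generic transitivity degree via the hyperoctahedral sections of Lemma~\ref{lm:existence-of-hyperoctahedral} (Problem~\ref{prob:boundson-t})---but neither your sketch nor the paper's programme constitutes a proof.
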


Indeed this  would follow from the Cherlin-Zilber Algebraicity Conjecture.

Obviously, if Conjecture \ref{conj:definable-on-simple} is true, then the following is also true.

\begin{conjecture} \label{conj:no-regular-type}
Connected primitive permutation groups of \fmrd\  and regular type do not exist.
\end{conjecture}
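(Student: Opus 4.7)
The plan is simply to unpack the implication that the authors signal by ``Obviously'' in the paragraph preceding the conjecture: take a connected definably primitive permutation group $(G,X)$ of regular type and show, using Conjecture~\ref{conj:definable-on-simple}, that its structure is forced into case~(4) of the Macpherson--Pillay classification, so that no genuine case~(2) action exists.

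First I would fix a basepoint $x_0\in X$, let $M$ be the stabilizer $G_{x_0}$, and use regularity of the simple definable normal subgroup $S$ to write $G=S\rtimes M$ with $M$ acting on $S$ by conjugation. Two properties of this action need to be checked so that Conjecture~\ref{conj:definable-on-simple} applies. Faithfulness is immediate from the permutation-group setup: any $m\in C_M(S)$ acts as the identity on every $x=s\cdot x_0$, since $m\cdot(s\cdot x_0)=(msm^{-1})\cdot(m\cdot x_0)=s\cdot x_0$, and the action has trivial kernel. Irreducibility is the standard argument recorded in Section~\ref{sec:common}: definable primitivity forces $M$ to be a maximal definable subgroup of $G$, so any proper nontrivial definable $M$-invariant subgroup $P\leqslant S$ would yield a strictly intermediate definable $M<PM<G$.

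Next I would invoke Conjecture~\ref{conj:definable-on-simple} to conclude $M\simeq S$, with the action being conjugation via some isomorphism $\varphi\colon M\to S$. A direct semidirect-product calculation---the map $(s,m)\mapsto(s\varphi(m),\varphi(m))$---then produces an isomorphism $G\simeq S\times S$ identifying $S$ with $S\times\{1\}$ and $M$ with the diagonal $D=\{(s,s):s\in S\}$, so that $(G,X)$ is equivalent to the action of $S\times S$ on $(S\times S)/D$. This is precisely case~(4) of Macpherson--Pillay, not a case~(2) action, contradicting the assumption that $(G,X)$ is of regular type in the strict sense in which the four cases of that classification are mutually exclusive.

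The main obstacle is, of course, that Conjecture~\ref{conj:definable-on-simple} is itself an open problem of essentially the same depth as the Cherlin--Zilber Algebraicity Conjecture for simple groups of finite Morley rank, so the entire deduction is only conditional. Within the deduction the sole bookkeeping point worth being careful about is that the two subgroups $S$ and $M$ of $G$---abstractly isomorphic but intersecting trivially inside $G$---become the factor $S\times\{1\}$ and the diagonal $D$ after the identification $G\simeq S\times S$.
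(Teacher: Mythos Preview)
The statement is a \emph{conjecture}; the paper explicitly calls it ``still wide open'' and gives no proof. All the paper offers is the single word ``Obviously'' for the implication from Conjecture~\ref{conj:definable-on-simple}, and your proposal is a correct and careful unpacking of precisely that implication: the checks that $M\cong G/S$ is connected and acts faithfully and irreducibly on $S$, together with the explicit isomorphism $G\simeq S\times S$ via $(s,m)\mapsto(s\varphi(m),\varphi(m))$, are exactly what one expects to lie behind ``obviously''.

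One point should be stated more sharply. As literally worded in the paper, cases~(2) and~(4) of the Macpherson--Pillay list are \emph{not} disjoint: in the diagonal action of $T\times T$ on $(T\times T)/D$, the factor $T\times\{1\}$ is a simple normal subgroup acting regularly, so case~(4) formally satisfies the description of ``regular type''. Your argument therefore needs the standard reading of the classification in which case~(2) carries the extra condition $C_G(S)=1$ (equivalently, $S$ is the unique minimal normal subgroup). With that reading the contradiction is clean: once $G\simeq S\times S$, the centraliser $C_G(S)$ corresponds to $\{1\}\times S\neq 1$. You allude to this with ``in the strict sense in which the four cases \dots\ are mutually exclusive'', but it is better to name the extra hypothesis explicitly than to invoke exclusivity as a black box.
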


This conjecture was stated, as a problem, in the Macpherson and Pillay paper \cite{Macpherson-Pillay95} of 1995): do primitive groups of regular type  (they were called primitive groups of type (2) there) exist?

The conjecture is still wide open, but some possible approaches to it can be found in Section \ref{sec:COnclusions}.

\subsection{Primitive groups of affine type} From now on $G$ is always a connected group of \fmr acting definably on a definable set $X$.

The following is the main result of the present paper. Its proof is spread over the series of papers \cite{bbgeneric,bbpseudo,bbsharp,bbnotsharp,bbsolvable,avb}.

\begin{theorem} \label{th:mail-on-primitive-affine}	Let $(G,X)$ be a connected definably primitive permutation group of affine type and assume that the action is generically $t$-transitive and $\rk(X) = n$.

Then $t \leqslant n +1$.

Moreover, if $t = n +1$, then  the group $G$ is isomorphic to the affine general linear group $\operatorname{\rm AGL}_n(K) \simeq K^n\rtimes \operatorname{\rm GL}_n(K)$  for some \acf $K$,  and the action of $G$ on $X$ is equivalent to $\operatorname{\rm AGL}_n(K)$  acting on the $n$-dimensional affine space $X=\mathbb{A}_n(K)$.
\end{theorem}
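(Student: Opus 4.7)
The plan is to reduce, via the correspondence of Section \ref{sec:common}, to the study of the action of a point stabiliser $M$ on the regular normal subgroup $V$, writing $G = V \rtimes M$ and identifying $X$ with $V$. The generic $t$-transitivity of $(G,X)$ translates into generic $(t-1)$-transitivity of $M$ on $V$, and the action is faithful and (by maximality of $M$) irreducible in the sense of Section \ref{sec:common}. Lemma \ref{lm:trichotomy-for-irreducible-actions}, combined with the affine hypothesis, leaves $V$ either an elementary abelian $p$-group or a torsion-free divisible abelian group, of Morley rank $n$.

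Next I would extract internal structure using Lemma \ref{lm:existence-of-hyperoctahedral}. Applied to a generic tuple $(v_1, \dots, v_{t-1}) \in V^{t-1}$, it provides a subgroup $H \leqslant M$ isomorphic to $\symm_{t-1} \ltimes \mathbb{Z}_2^{t-1}$, or to $\symm_{t-1}$ alone if $V$ is an elementary abelian $2$-group. Assume first that $V$ is not a $2$-group. The $t-1$ pairwise commuting definable involutions produced by $H$ give $(-1)$-eigenspaces $V_1, \dots, V_{t-1}$ of $V$ with $v_i \in V_i$, cyclically permuted by the $\symm_{t-1}$-part of $H$. The critical claims to establish are that these $V_i$ are pairwise complementary and together span a subgroup of finite index in $V$, and that the stabiliser in $M$ of the unordered frame $\{V_1, \dots, V_{t-1}\}$ acts with enough transitivity on each $V_i$ to invoke Zilber's Field Theorem. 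That theorem would then yield an interpretable algebraically closed field $K_i$ acting as scalars on $V_i$; the $\symm_{t-1}$-conjugacy identifies these fields into a single $K$, and $V$ acquires the structure of a $K$-vector space with the $V_i$ as $K$-subspaces.

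From this decomposition the rank inequality is immediate: $n = \rk(V) \geqslant \sum_i \rk(V_i) \geqslant (t-1)\rk(K) \geqslant t-1$, so $t \leqslant n + 1$. In the extremal case $t = n+1$, each $V_i$ has rank $1$ and $\rk(K) = 1$, so $V = K v_1 \oplus \cdots \oplus K v_n \simeq K^n$. The scalar subgroups together with $\symm_n$ realise a monomial subgroup of $\operatorname{GL}_n(K)$ inside $M$; sharp generic $n$-transitivity of $\operatorname{GL}_n(K)$ on $K^n$ (Examples \ref{ex:multiple-generic-transitivity}(3)) combined with the generic $n$-transitivity of $M$ then forces $M = \operatorname{GL}_n(K)$ and $G \simeq \operatorname{AGL}_n(K)$ acting in the natural affine way. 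The elementary abelian $2$-group sub-case is handled by the parallel argument using $H \simeq \symm_{t-1}$ alone, with the scalar field $K$ extracted by applying Zilber's Field Theorem to a suitably chosen stabiliser.

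The main obstacle is clearly the field-interpretation step. Lemma \ref{lm:existence-of-hyperoctahedral} furnishes the hyperoctahedral subgroup but says nothing about how its eigenspace geometry sits inside the $M$-action, nor about which stabiliser will act transitively on $V_i \setminus \{0\}$. Producing that transitivity, controlling pseudo-reflection-like elements, managing the sharp versus non-sharp dichotomy, and separately treating the solvable sub-case where Zilber's Field Theorem must be invoked under weaker transitivity assumptions, are exactly the technical engines that fill the six papers \cite{bbgeneric, bbpseudo, bbsharp, bbnotsharp, avb, bbsolvable}.
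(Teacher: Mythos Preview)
Your reduction to the action of a point stabiliser $M$ on the regular abelian normal subgroup $V$ is correct and matches the paper's Section~\ref{sec:equivalen-to-main}. The genuine gap is at your invocation of Lemma~\ref{lm:existence-of-hyperoctahedral}: that lemma does \emph{not} furnish a subgroup $H \leqslant M$ isomorphic to the hyperoctahedral group. It furnishes a \emph{section} $H/K \simeq \symm_{t-1}\ltimes\mathbb{Z}_2^{t-1}$, where $K$ is the pointwise stabiliser of $\{\pm v_1,\dots,\pm v_{t-1}\}$, and one has $K=1$ only under the additional hypothesis of generic \emph{sharp} transitivity. Without genuine involutions $e_i \in M$ you have no eigenspaces $V_i = [V,e_i]$, and your entire decomposition and rank-inequality argument collapses. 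The paper flags this explicitly as ``perhaps the most technical result of this project'' and devotes an entire paper \cite{bbnotsharp} to proving $K=1$, using a Maschke-type theorem for finite $p'$-groups acting on $V$ together with Frattini arguments over good tori and some delicate finite group theory. Your sketch simply assumes this away.

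Even in the sharp case, your endgame is too quick. Producing a monomial subgroup (torus $(K^*)^n$ together with $\symm_n$) inside $M$ does not by itself force $M = \operatorname{GL}_n(K)$; the paper closes this by reading off $\mathrm{Pr}_2(M) = n = \rk(V)$ and applying the pseudoreflection identification (Corollary~\ref{corollary-prufer}), which in turn rests on the Generic Identification Theorem~\ref{th:strongGIT} and the Curtis--Tits machinery. Your appeal to ``sharp generic $n$-transitivity of $\operatorname{GL}_n(K)$ combined with generic $n$-transitivity of $M$'' does not constitute such an argument. Finally, the characteristic~$2$ case is not handled by a ``parallel argument'': with only $\symm_{t-1}$ and no commuting involutions in $M$ there is no eigenspace decomposition to speak of, and Zilber's Field Theorem has nothing to bite on. The paper instead routes this case through the classification of simple groups of even type (Fact~\ref{centralproduct}) and the Linearisation Theorem~\ref{linearisation}.
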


\begin{conjecture}\cite[Conjecture 5]{bordel}  \label{c:afterKnop}
{Let   $G$ be a connected  group of \fmr  with  $Z(G)$  finite and  $G/Z(G)$ simple. Assume that  $G$ acts definably and faithfully on an abelian group  $V$, and that $G$ is transitive on $V\setminus\{0\}$ and generically $2$-transitive. Then this action  is definably equivalent to the natural action of  $(\mathop{SL}_n(K)$ on $K^n)$ for some \acf $K$.}
\end{conjecture}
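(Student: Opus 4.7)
The plan is to interpret an \acf $K$ turning $V$ into a $K$-vector space $K^n$ on which $G$ acts by $K$-linear automorphisms, and then to identify $G$ as $\operatorname{SL}_n(K)$. As preliminaries, transitivity of $G$ on $V\setminus\{0\}$ ensures $V$ has no proper infinite definable $G$-invariant subgroup, so the action is irreducible in the sense of Section~\ref{sec:common}; since $V$ is abelian, Lemma~\ref{lm:trichotomy-for-irreducible-actions} forces $V$ to be either torsion-free divisible or an elementary abelian $p$-group. Moreover, transitivity on $V\setminus\{0\}$ combined with generic $2$-transitivity yields $\rk(G)\geqslant 2\rk(V)$, because the stabiliser $G_v$ of a generic nonzero $v\in V$ has a dense orbit on $V$.

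The crucial step is to extract the field. Choose a Cartan subgroup $T\leqslant G$ (infinite, because $G/Z(G)$ is simple and $G$ is connected of positive rank); the action of $T$ on $V$ is non-trivial, else $T\subseteq C_G(V)=1$ by faithfulness. By the descending chain condition on definable subgroups, pick a minimal infinite $T$-invariant definable subgroup $L\leqslant V$, so that $T/C_T(L)$ acts on $L$ irreducibly. Zilber's field theorem, applied to this connected solvable irreducible action, produces an interpretable \acf $K$ with $L\simeq K^+$ and a definable copy of $K^*$ inside $T/C_T(L)$ acting as scalars. Because $V$ is $G$-irreducible, the $G$-conjugates of $L$ span $V$; using the $\symm_2\ltimes\mathbb{Z}_2^2$ action supplied by Lemma~\ref{lm:existence-of-hyperoctahedral} (applied to a generic pair from the generic $2$-transitive action on $V^2$) to coherently align the $K$-structures on distinct conjugates, one obtains a $G$-equivariant identification $V\simeq K^n$ as $K$-vector spaces, so $G$ embeds as a definable connected subgroup of $\operatorname{GL}_n(K)$.

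To identify $G$ inside $\operatorname{GL}_n(K)$, the finiteness of $Z(G)$ forbids $G$ from containing the full central torus of $\operatorname{GL}_n(K)$, so $G\cap K^*$ is finite and in particular $G\neq \operatorname{GL}_n(K)$. Transitivity of $G$ on $V\setminus\{0\}$, simplicity of $G/Z(G)$, and the rank bound $\rk(G)\geqslant 2n$ (with $n=\dim_K V$) leave $\operatorname{SL}_n(K)$ as the only possibility among connected definable subgroups of $\operatorname{GL}_n(K)$ acting transitively on $K^n\setminus\{0\}$; a rank count then forces equality. Note that generic $2$-transitivity automatically forces $n\geqslant 3$, since by Examples~\ref{ex:multiple-generic-transitivity}(2) the group $\operatorname{SL}_2(K)$ is only generically $1$-transitive on $K^2$.

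The main obstacle is the field interpretation. Zilber's theorem produces $K$ only on the $T$-piece $L$; gluing these local $K$-structures into a globally $G$-invariant $K$-vector space structure on all of $V$, without access to the algebraic geometry available in the proof of Knop's theorem in the algebraic category, is the delicate step, and is the precise reason this conjecture---the \fmrd\ analogue of Knop's result---remains open.
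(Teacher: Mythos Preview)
The paper does not prove this statement: it is explicitly presented as Conjecture~\ref{c:afterKnop}, and the only comment the paper offers is that the analogous result holds for algebraic groups by Knop's theorem. There is therefore no proof in the paper against which to compare your attempt, and you correctly acknowledge this yourself in your final paragraph.

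As a strategic outline your sketch is reasonable, but several steps are looser than they appear. Zilber's field theorem gives only an embedding $T/C_T(L)\hookrightarrow K^*$, not a full copy of $K^*$ as you claim. The appeal to Lemma~\ref{lm:existence-of-hyperoctahedral} with $t=2$ produces only the dihedral group of order $8$ as a section, which is far too small to force coherence of the local $K$-structures across all $G$-conjugates of $L$; this gluing is, as you say, exactly the open point. Finally, even granting an embedding $G\leqslant\operatorname{GL}_n(K)$, your identification of $G$ with $\operatorname{SL}_n(K)$ is not free: you would need to argue that $G$ is Zariski closed (this does follow from definability in the pure field, but must be said) and then classify the connected algebraic subgroups of $\operatorname{GL}_n(K)$ that are quasisimple, transitive on nonzero vectors, and generically $2$-transitive --- which is essentially Knop's theorem again. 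So your roadmap reduces the conjecture to the field-gluing problem followed by an invocation of Knop; that is the expected shape of an eventual argument, but it is not a proof, and the paper makes no claim that it is.
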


This is known for algebraic groups \cite[Satz~1]{Knop1983}, where the commutativity of $V$ is not required.

\subsection{A conjecture placing primitive groups of affine type into a wider classification}

Theorem \ref{th:mail-on-primitive-affine} suggests exact bounds for ranks of  all primitive groups of \fmrd.

\begin{conjecture} \label{conj:general bounds}
Let $(G,X)$ be a connected primitive permutation group  and $\rk X = n$.

Then \[\rk G \leqslant n(n+2).\]  Moreover,
\bi
\item[(a)] If $\rk G = n(n+2)$ then $G \simeq \operatorname{PGL}_{n+1}(K)$  for some \acf $K$, and the action of $G$ on $X$ is the natural action of $\operatorname{PGL}_{n+1}(K)$  on the projective space $\mathbb{P}_n(K)$.
\ei
In that case, $G$ is generically sharply $(n+2)$-transitive on $X$ and $b(G,X)= n+2$.

We further conjecture that there are only two other possibilities for the case $\rk G \geqslant n^2$ in which  $K$ is again an \acfd.

\bi
\item[(b)] $\rk G = n(n+1)$ and  $G \simeq \operatorname{AGL}_{n}(K)$.  The action of $G$ on $X$ is the natural action of  $\operatorname{AGL}_{n}(K)$ on the affine space $\mathbb{A}_n(K)$; $G$ is generically sharply $(n+1)$-transitive on $X$ and $b(G,X)= n+1$.

\item[(c)] $\rk G = n^2$ and $G \simeq \operatorname{ASL}_{n}(K)$.  The action of $G$ on $X$ is the natural action of  $\operatorname{ASL}_{n}(K)$ on the affine space $\mathbb{A}_n(K)$; $G$ is generically sharply $n$-transitive on $X$ and $b(G,X)= n$.
\ei
\end{conjecture}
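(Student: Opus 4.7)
The plan is to combine the Macpherson-Pillay classification with the Main Theorem of this paper, handling each of the four structural types separately. I would first dispatch the two types that cannot realise the conjectured extremum. The diagonal type $G \simeq T \times T$ acting on $G/D$ gives $\rk G = 2\rk T = 2n$, well below $n(n+2)$. The regular type is conjecturally vacuous (Conjecture \ref{conj:no-regular-type}); even without settling that conjecture, Conjecture \ref{conj:definable-on-simple} would force $M \simeq S$ and hence $\rk G \leqslant 2n$. So the extremum must lie in the affine or almost simple branches.

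For the affine type, Theorem \ref{th:mail-on-primitive-affine} directly gives $t \leqslant n+1$ and identifies case (b) at the boundary $t = n+1$, yielding $G \simeq \operatorname{AGL}_n(K)$ of rank $n(n+1)$. The remaining task is to show that for $t \leqslant n$ the rank drops to at most $n^2$, with $G \simeq \operatorname{ASL}_n(K)$ (case (c)) as the unique extremum. I would pursue this by refining the hyperoctahedral Lemma \ref{lm:existence-of-hyperoctahedral} and the field-recovery argument underlying the Main Theorem: the irreducible action of the point stabiliser $M$ on the abelian socle $V$ should again interpret an algebraically closed base field, and a careful accounting of the stabiliser chain along a generic $n$-tuple should force $M \simeq \operatorname{SL}_n(K)$.

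The main obstacle, and the case that actually attains $\rk G = n(n+2)$, is the almost simple type $T \leqslant G \leqslant \operatorname{Aut} T$ with $T_x \neq 1$. The plan here is first to identify the simple group $T$ as a simple algebraic group over an algebraically closed field, then invoke Popov's Fact \ref{fact:Popov} together with a dimension-counting argument on the coset variety $T/T_x$ to pin down the extremum at $T \simeq \operatorname{PGL}_{n+1}(K)$ acting on $\mathbb{P}_n(K)$, where $\rk T = (n+1)^2 - 1 = n(n+2)$ and the $(n+2)$-transitivity and base size $n+2$ both read off from the cross-ratio description. The hard part, and the reason Conjecture \ref{conj:general bounds} remains open, is this first step: identifying $T$ rests on the Cherlin-Zilber Algebraicity Conjecture, which is currently established only in even type and in rather restricted configurations in odd and degenerate types. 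In this sense, the almost simple case is essentially tied to progress on the central conjecture of the subject, and an unconditional proof of the bound $\rk G \leqslant n(n+2)$ in that branch would require a substantial advance in the structure theory of simple groups of finite Morley rank.
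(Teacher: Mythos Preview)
The statement you are attempting to prove is labelled \textbf{Conjecture} in the paper and is presented as open; the paper offers no proof, only motivation via the Main Theorem, Popov's table, and Mar\'oti's finite analogue. So there is no ``paper's own proof'' to compare your proposal against. You appear to recognise this yourself in the final paragraph, where you acknowledge that the almost simple branch is tied to the Cherlin--Zilber Conjecture and that this is precisely why Conjecture~\ref{conj:general bounds} remains open.

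That said, as a strategic outline your case split via Macpherson--Pillay is the natural one, and your handling of the diagonal and regular types is sound (with the caveat that the regular case still depends on Conjecture~\ref{conj:no-regular-type}, as you note). Two points deserve more care. First, even if one bounds the generic transitivity degree $\tau(n)$, the paper's Proposition only yields $\rho(n)\leqslant n\tau(n)+\binom{n}{2}$, not the sharp $\rho(n)\leqslant n(n+2)$; closing that $\binom{n}{2}$ gap requires a finer analysis of the stabiliser chain, which you sketch but do not carry out. Second, Popov's Fact~\ref{fact:Popov} is stated only in characteristic~$0$, so invoking it in the almost simple branch leaves positive characteristic unaddressed. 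Finally, your identification of case~(c) with $\operatorname{ASL}_n(K)$ at rank exactly $n^2$ in the affine branch is not a consequence of the Main Theorem and would itself require new work along the lines of Conjecture~\ref{c:afterKnop}.
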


The permutation groups appearing in the cases (b) and (c)  are of affine type; (b) is in the  focus  of the present paper. Also, it is the stabiliser of a point in the group of (a), $(\operatorname{PGL}_{n+1}(K), \mathbb{P}_n(K))$, while $\operatorname{GL}_{n}(K)$ is the stabiliser of a point in $(\operatorname{AGL}_{n}(K),\mathbb{A}_n(K))$  of case (b).

It is useful to compare this conjecture with the result by  Attila Maróti on orders of finite primitive groups.
\begin{fact} \cite{Maroti2002}
Almost all primitive permutation groups of degree $n$ have order at most
\[
n\cdot \prod_{i=0}^{[\log_2 n]-1} (n - 2^i ) < n^{1+[\log_2 n]},
\]
or have socle isomorphic to a direct power of some alternating group. The Mathieu groups,
$M_{11}$, $M_{12}$, $M_{23}$, and $M_{24}$ are the four exceptions.
\end{fact}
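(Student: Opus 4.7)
My plan is to apply the O'Nan–Scott theorem to decompose $G$ into the five standard types (affine, almost simple, simple diagonal, product action, twisted wreath) and bound $|G|$ in each type separately, using CFSG where needed. The numerical bound $n\prod_{i=0}^{[\log_2 n]-1}(n-2^i)$ equals $|\mathrm{AGL}_k(2)|$ when $n=2^k$, so the affine case is the tight one and dictates the shape of the target estimate.

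For the affine case, $n=p^k$, the socle is $V\cong\mathbb{F}_p^k$, and $G=V\rtimes G_0$ with $G_0\leqslant\mathrm{GL}_k(p)$ acting irreducibly, hence
\[
|G| \leqslant n\cdot|\mathrm{GL}_k(p)| = n\prod_{i=0}^{k-1}(p^k-p^i).
\]
If $p=2$ this is exactly the stated bound, attained by $\mathrm{AGL}_k(2)$. For $p>2$ one has $[\log_2 n]\geqslant k$, and a term-by-term comparison $p^k-p^i\leqslant n-2^i$ together with the extra factors $n-2^i$ for $i\geqslant k$ yields strict inequality.

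For the almost simple case, $T\trianglelefteq G\leqslant\mathrm{Aut}(T)$ with $T$ nonabelian simple, and a point stabiliser is a maximal subgroup $M<G$ of index $n$. I would walk through the CFSG list. For $T=\mathrm{Alt}_m$ acting on $k$-subsets or set partitions, one has $|G|=m!/2$ with $n$ much smaller, so these are collected into the alternating-socle exceptional family. For sporadic $T$, a finite check against the published maximal subgroup data isolates exactly $M_{11}, M_{12}, M_{23}, M_{24}$ in their $4$- or $5$-transitive actions as the only violators. For Lie-type $T$ (classical or exceptional), Liebeck's theorem bounding orders of maximal subgroups supplies uniform estimates of the form $|M|\leqslant |T|^{c/\mathrm{rank}}$ sharp enough to yield $|G|\leqslant n^{1+[\log_2 n]}$.

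The diagonal, product action, and twisted wreath types reduce to the almost simple case: in each the socle is a power $T^\ell$ of a nonabelian simple group, acting on $n=|T|^{\ell-1}$ or $n=|\Delta|^\ell$ points respectively. Induction on $\ell$ combined with the super-multiplicative growth of the function $n\mapsto n^{1+[\log_2 n]}$ absorbs the almost-simple bound applied factorwise, except precisely when the simple factor is alternating --- these together with the alternating case in the almost-simple step are exactly the stated ``direct power of some alternating group'' exception. The main obstacle is the Lie-type subcase of the almost simple analysis: extracting maximal subgroup bounds that are both sharp and uniform enough across rank and base field to guarantee no further exceptions beyond the four Mathieu groups. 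Everything else is either a finite computation (Mathieu) or an inductive arithmetic step that the logarithmic exponent in the bound makes routine.
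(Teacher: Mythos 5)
A preliminary remark: the paper does not prove this statement. It is imported verbatim as a Fact with a citation to Mar\'oti's 2002 paper, so there is no in-paper argument to compare yours against; the only meaningful comparison is with Mar\'oti's published proof (which itself rests on Cameron, Praeger--Saxl, and Liebeck).

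Measured against that, your outline has the right architecture --- O'Nan--Scott reduction, the affine case as the extremal one realised by $\operatorname{AGL}_k(2)$, a CFSG-driven analysis of the almost simple case, and absorption of the compound types by super-multiplicativity --- and your affine computation is correct. But the step you yourself flag as the main obstacle is where the sketch genuinely breaks. The input you name for the Lie-type case, a bound of the shape $|M|\leqslant |T|^{c/r}$ ($r$ the rank) for maximal subgroups $M$ of a simple group $T$ of Lie type, is false as stated: parabolic point stabilisers are maximal and have order a fixed positive power of $|T|$ bounded away from $0$ (for $\operatorname{PGL}_d(q)$ on projective points, $|M|$ is roughly $|T|^{d/(d+1)}$), and these parabolic actions are precisely the near-extremal cases for the target bound $n^{1+[\log_2 n]}$, so no estimate of that shape can exist or suffice. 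What the actual proof uses is a relation between the degree $n=[G:M]$ and $|G|$: Liebeck's theorem that a non-standard primitive almost simple group has base size, and hence order, bounded by a polylogarithmic power of $n$, which Mar\'oti then sharpens to the exponent $1+[\log_2 n]$ by a long case-by-case analysis of maximal subgroups of classical, exceptional, alternating and sporadic groups. It is only this analysis that certifies the completeness of the exceptional list, i.e.\ that $M_{11}$, $M_{12}$, $M_{23}$, $M_{24}$ are the \emph{only} violators; a ``finite check'' disposes of the sporadic socles but not of the infinitely many Lie-type actions of small degree. The compound-type reduction also needs the quantitative component bound before your super-multiplicativity step can run, so it inherits the same gap.
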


Together with Popov's theorem, Fact \ref{fact:Popov}, this leads to the following conjecture.

\begin{conjecture}
Almost all connected primitive permutation groups of \fmr  have maximum degree of generic transitivity at most $4$. The only exceptions are groups listed in Conjecture \ref{conj:general bounds}.
\end{conjecture}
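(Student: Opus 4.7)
The natural framework is the Macpherson--Pillay classification (Section \ref{sec:Macpherson-Pillay}): any connected definably primitive $(G,X)$ falls into one of the four cases (1)--(4), and I would argue case by case that either $(G,X)$ appears in Conjecture \ref{conj:general bounds} or the degree of generic transitivity $t$ is at most $4$.

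For the affine case, Theorem \ref{th:mail-on-primitive-affine} already gives $t \leqslant n+1$ and identifies $(\operatorname{AGL}_n(K), \mathbb{A}_n(K))$ as the extremal case. To finish it I would extend the strategy of \cite{avb,bbsolvable}: the hyperoctahedral configuration of Lemma \ref{lm:existence-of-hyperoctahedral}, combined with the trichotomy of Lemma \ref{lm:trichotomy-for-irreducible-actions}, forces (for $t$ close to $n$) the existence of a definable field $K$ and a linear action of the point stabiliser $M$ on $V \simeq K^n$ of $\operatorname{GL}_n$- or $\operatorname{SL}_n$-type. A dimension/stabiliser computation, together with the sharp-versus-nonsharp dichotomy visible in Examples \ref{ex:multiple-generic-transitivity}(4)--(5), should then pin down $t = n+1$ to $\operatorname{AGL}_n(K)$, $t = n$ to $\operatorname{ASL}_n(K)$, and rule out $5 \leqslant t \leqslant n-1$ altogether.

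For the almost simple case, conditional on the Cherlin--Zilber Conjecture \ref{conj:Cherlin-Zilber} the socle of $G$ is a simple algebraic group over an \acfd, every definable action is equivalent to a rational action on a variety, and Popov's Fact \ref{fact:Popov} restricts $t \geqslant 5$ to type $A_n$ with $t = n+2$; a classification of rational primitive actions of $\operatorname{PGL}_{n+1}(K)$ attaining $t = n+2$ then gives the natural action on $\mathbb{P}_n(K)$. Case (2), regular type, is vacuous if Conjecture \ref{conj:no-regular-type} holds. For case (4), $G = T \times T$ acts on $T$ by $(t_1,t_2)\cdot t = t_1 t t_2^{-1}$, and a direct calculation shows that $G$-orbits on $X^k$ for $k \geqslant 2$ correspond to $T$-conjugacy classes on $T^{k-1}$, so generic $k$-transitivity of $(G,X)$ is equivalent to generic $(k-1)$-transitivity of $T$ on itself by conjugation, reducing case (4) to an almost-simple question amenable to the same Popov-style bounds.

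The decisive obstacles lie outside the affine world: Conjecture \ref{conj:Cherlin-Zilber} (needed for the almost simple case in odd and degenerate types) and Conjecture \ref{conj:no-regular-type}. Unconditional progress therefore seems feasible only in even type, where algebraicity is known via \cite{abc}, or by circumventing these conjectures; the degenerate type remains largely inaccessible. Even inside the affine case, closing the gap between the solved $t = n+1$ and the target $t \leqslant 4$ demands a new classification of the subextremal situations $t = n$ (morally a multi-transitive version of Conjecture \ref{c:afterKnop}) and $5 \leqslant t \leqslant n-1$, which is likely to be the main internal difficulty of the affine part of the project.
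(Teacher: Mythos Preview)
The statement you were asked to prove is labelled in the paper as a \emph{conjecture}, and the paper offers no proof of it whatsoever: it is simply motivated by the analogy with Mar\'oti's bound for finite primitive groups and by Popov's Fact~\ref{fact:Popov}, then the paper moves on. So there is no ``paper's own proof'' to compare your proposal against.

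What you have written is not a proof either, and to your credit you do not pretend otherwise: your final paragraph explicitly lists the open conjectures (Cherlin--Zilber, non-existence of regular type) that your argument is conditional on, and singles out the unresolved affine range $5 \leqslant t \leqslant n$ as ``the main internal difficulty''. Read as a research outline rather than a proof, your case split along the Macpherson--Pillay classification is the natural one, and your identification of the obstacles is accurate. One additional gap you should flag: Popov's Fact~\ref{fact:Popov} is stated in the paper only for characteristic~$0$, so even granting the Cherlin--Zilber Conjecture your treatment of the almost simple case would still require an extension of Popov's bounds to positive characteristic, which is not available from the results quoted here. Similarly, your reduction of the diagonal case~(4) to ``Popov-style bounds'' for the conjugation action of $T$ on $T^{k-1}$ is plausible but not something the paper supplies; it would itself need justification.

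In short: the paper states this as an open problem, your proposal is a sketch of how one might attack it, and the two are consistent --- but neither constitutes a proof.
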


\subsection{An equivalent form of our Main Theorem} \label{sec:equivalen-to-main}

As becomes clear from discussion in Section \ref{sec:common}, our Main Theorem can be equivalently formulated as the following statement.

\begin{theorem}	\label{th:real} \label{Vabelian}
Let $G$ be a connected group of \fmr acting on a connected abelian group $V$ of \fmrd, definably, faithfully and generically  $n$-transitively, where $n\geqslant \rk(V)$.

Then $n=\rk(V)$, $V\cong K^n$, $G\cong {\operatorname{GL}}_n({K})$, and the action $G\curvearrowright V$ is equivalent to $ {\operatorname{GL}}_n({K})\curvearrowright K^n$ for some \acf $K$.
\end{theorem}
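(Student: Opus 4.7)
The plan is to deduce Theorem \ref{th:real} from the Main Theorem \ref{th:mail-on-primitive-affine} via the semidirect-product construction of Section \ref{sec:common}. I form $\widetilde{G} := V \rtimes G$ and let it act on $X := V$ by $(v,g) \cdot x := g(x) + v$. Then $V$ (embedded as the translation subgroup) is a definable abelian normal subgroup acting regularly, $\widetilde{G}$ is connected, and the stabilizer of $0 \in V$ is precisely $G$. A direct check shows that if $G$ on $V$ is generically $n$-transitive, then $\widetilde{G}$ on $X$ is generically $(n+1)$-transitive: the $\widetilde{G}$-orbit of $(0, v_1, \ldots, v_n)$, for $(v_1, \ldots, v_n)$ in the generic $G$-orbit, consists of all tuples $(w_0, w_1, \ldots, w_n)$ with $(w_1 - w_0, \ldots, w_n - w_0)$ in that orbit, and this is generic in $V^{n+1}$.

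To apply the Main Theorem to $(\widetilde{G}, X)$ I must verify definable primitivity, equivalently (by Section \ref{sec:common}) that the $G$-action on $V$ is \emph{irreducible}: no proper nontrivial connected definable $G$-invariant subgroup $W < V$ exists. I would establish this by induction on $\rk(V)$. The base case $\rk(V)=1$ is automatic, since a connected rank-one group has no proper connected definable subgroups. For the inductive step, suppose such a $W$ exists. A rank chase shows that the image in $(V/W)^n$ of the generic $G$-orbit in $V^n$ is a $G$-orbit of maximal rank $n \cdot \rk(V/W)$, so $G$ is generically $n$-transitive on $V/W$; passing to $G / C_G(V/W)$ gives a connected group acting faithfully and generically $n$-transitively on the connected abelian group $V/W$ of strictly smaller Morley rank. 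The inductive hypothesis then forces $n \leqslant \rk(V/W) < \rk(V) \leqslant n$, a contradiction. I expect this inductive primitivity reduction to be the only substantive step; it is the main obstacle in the plan.

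With $(\widetilde{G}, X)$ now known to be a connected definably primitive permutation group of affine type, generically $(n+1)$-transitive with $\rk(X) = \rk(V)$, the Main Theorem yields
\[
n+1 \leqslant \rk(X) + 1 = \rk(V) + 1 \leqslant n+1,
\]
so $\rk(V) = n$ and we are in the extremal case $t = \rk(X)+1$. Hence $\widetilde{G} \cong \operatorname{AGL}_n(K) \cong K^n \rtimes \operatorname{GL}_n(K)$ for some algebraically closed field $K$, with the action on $X$ equivalent to the natural action of $\operatorname{AGL}_n(K)$ on $\mathbb{A}_n(K)$.

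Finally, to extract the statement about $G$ and $V$ separately: the abelian normal regular subgroup of $\operatorname{AGL}_n(K)$ is the translation subgroup $K^n$, and by uniqueness this must coincide with $V$, giving $V \cong K^n$ as definable groups. Under the identification $X = \mathbb{A}_n(K)$, the subgroup $G \leqslant \widetilde{G}$ is the stabilizer of $0 \in V$ in $\widetilde{G} \cong \operatorname{AGL}_n(K)$, which is the standard $\operatorname{GL}_n(K)$. Therefore $G \cong \operatorname{GL}_n(K)$ and its action on $V$ is equivalent to the natural action of $\operatorname{GL}_n(K)$ on $K^n$, completing the deduction.
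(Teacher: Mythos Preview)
Your reduction is circular. In this paper the Main Theorem (Theorem~\ref{th:mail-on-primitive-affine}) is \emph{derived from} Theorem~\ref{Vabelian}, not the other way around; see the last sentence of Section~\ref{sec:proof-thm-3}. The ``equivalence'' announced in Section~\ref{sec:equivalen-to-main} is a statement of logical interdeducibility, and the entire effort summarised in Section~2 is devoted to proving Theorem~\ref{Vabelian} directly, after which the Main Theorem is read off as a corollary. Invoking the Main Theorem as a black box therefore assumes precisely the result you are asked to establish.

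For comparison, the paper's own proof of Theorem~\ref{Vabelian} proceeds by a trichotomy on the structure of $V$ (Macintyre's theorem, Fact~\ref{macintyre}): in odd characteristic one builds a hyperoctahedral section via Lemma~\ref{lm:existence-of-hyperoctahedral}, proves by a delicate argument that the action is actually generically \emph{sharply} $n$-transitive, and then identifies $G$ through pseudoreflection subgroups and the Generic Identification Theorem~\ref{th:strongGIT}; in characteristic~$2$ one uses the classification of even-type groups (Fact~\ref{centralproduct}) together with the Linearisation Theorem~\ref{linearisation}; in characteristic~$0$ one uses the Loveys--Wagner theorem (Fact~\ref{loveyswagner}). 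None of this machinery is bypassed by the semidirect-product maneuver.

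There is also a smaller but genuine gap in your primitivity step. You define irreducibility as the absence of proper nontrivial \emph{connected} definable $G$-invariant subgroups, but the equivalence with definable primitivity in Section~\ref{sec:common} requires ruling out \emph{all} proper nontrivial definable $G$-invariant subgroups: any such $P$ gives $G < PG < \widetilde{G}$. Your induction on $\rk(V)$ only forces $\rk(W)=0$; when $V$ is an elementary abelian $p$-group this does not exclude nontrivial finite $G$-invariant subgroups, and you give no argument that none exist. (In characteristic~$0$ the issue disappears because $V$ is torsion-free.)
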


This formulation has the advantage of a much wider range of techniques being accessible, including some ideas and results from representation theory. Notice that the word \emph{`primitive'} is no longer mentioned in the statement. It will be convenient to change notation and use the symbol $G\curvearrowright V$ for a faithful action of a group $G$ by automorphisms of an abelian group $V$.

In the next Section, we describe, step by step, the development of the proof of Theorem \ref{th:real}.

\section{Outline of the Proof of Theorem 3}

As explained in the previous sections, the main result of the present paper is Theorem~\ref{th:mail-on-primitive-affine}, which follows from Theorem \ref{th:real}.

In this section, we will sketch the proof of Theorem \ref{th:real} and some of its corollaries. {As mentioned before,} the proofs were published in a sequence of six papers, where different techniques were applied to  different aspects of the same structure; and naturally in each paper, different parts from the \fmr literature were useful. While outlining the proof, we will try to emphasize the milestones of this project.

\subsection{Trichotomy}  From the beginning we knew, from Angus Macintyre's structural results about abelian groups of \fmr \cite[Theorem 6.7]{bn}, that $V$ is either an elementary abelian $p$-group for some prime $p$, or a divisible torsion-free group. We decided to focus first on the case when
\bi
\item[Case 1.] $V$ is an elementary abelian $p$-group for an odd prime $p$,
\ei
and leave two other cases
\bi
\item[Case 2.] $V$ is an elementary abelian $2$-group,
\item[Case 3.] $V$ is a divisible torsion-free group.
\ei
for dessert, because of powerful results available in the last two cases;
more precisely, classification of groups of even type \cite{abc} in Case 2, and the Linearisation Theorem by James Loveys and Frank Wagner \cite{Loveys-Wagner1993}, \cite[Theorem A.20]{bn} in Case 3.

So in the next few sections we discuss our proof in Case 1, and return to Cases 2 and 3 in Section \ref{sec:char-0-2}.

\subsection{Basis of induction}
When working with groups $G$ and $V$ in the proof of Theorem \ref{th:real} one can do induction on the Morley rank, or rather, work with a hypothetical counterexample with the smallest possible Morley rank of the subgroup $V$.

In our context, cases $\rk(V)=1,2, 3$ are already known; we list them here.

An action on a group is called {\em minimal} if the only improper definable subgroups left invariant under this action are the finite subgroups.

\begin{fact} {\rm (Zilber} {\rm \cite[Theorem 9.5]{bn})} Assume that $G$ and $V$ are connected abelian groups of \fmrd, {$\rk(V)=1$},  and $G$ acts on $V$ faithfully. Then there exists an \acf $K$ such that the action $G\curvearrowright V$ is equivalent to the action $B\curvearrowright K^+$ for some subgroup $B$ in $K^*$.           \label{zilberaction}
\end{fact}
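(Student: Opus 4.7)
The plan is to interpret an algebraically closed field $K$ inside the ambient structure using the action of $G$ on the strongly minimal group $V$, in the spirit of Zilber's Field Theorem.

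First I would exploit the fact that $V$, being definably connected of Morley rank $1$, is strongly minimal: every definable subset of $V$ is finite or cofinite, and in particular every proper definable subgroup of $V$ is finite. Faithfulness embeds $G$ into the group of definable automorphisms of $V$, and since $G$ is abelian, the subring $R$ of $\operatorname{End}(V)$ generated by $G$ (under pointwise addition and composition) is commutative. Concretely, I would consider
\[
R = \{\, \alpha_1 \pm \alpha_2 \pm \cdots \pm \alpha_k : \alpha_i \in G \cup \{0\}, \; k \geqslant 1\,\} \subseteq \operatorname{End}(V),
\]
and argue that this set stabilises after boundedly many steps and is definable of finite Morley rank. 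This is the place where Zilber's Indecomposability Theorem is indispensable: it lets one recognise the additive closure of $G$ in $\operatorname{End}(V)$ as a connected definable subgroup whose rank is bounded in terms of $\rk(G)$ and $\rk(V)$.

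Next I would show that $R$ is a field. For any nonzero $r \in R$, its kernel is a proper definable subgroup of $V$, hence finite by strong minimality, and its image is an infinite definable subgroup, hence equal to $V$ by connectedness. So every nonzero element of $R$ is ``almost invertible''. Passing to the quotient of $R$ by its Jacobson radical (which is nilpotent, since $R$ is commutative of finite Morley rank, and acts by nilpotent endomorphisms on the strongly minimal $V$) yields a commutative Artinian ring with a faithful simple module $V$; such a ring must be a field. Macintyre's theorem that every infinite field of finite Morley rank is algebraically closed then gives us our algebraically closed $K := R/\operatorname{rad}(R)$, interpretable in the given structure.

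Finally, $V$ inherits a definable faithful $K$-module structure. Since $\rk(V) = 1 = \rk(K)$ and $V$ is a simple $K$-module, it is one-dimensional, so there is a definable isomorphism $V \cong K^+$ under which $G$ acts by scalar multiplication; taking $B$ to be the image of $G$ in $K^*$ yields the desired equivalence $(G,V) \sim (B, K^+)$. The main obstacle throughout is the finite-Morley-rank construction of the ring $R$: bounding the number of summands needed to close $G$ under addition inside the potentially much larger $\operatorname{End}(V)$, and then verifying that $R$ really is a field rather than merely a commutative ring acting almost faithfully. Both steps rest on Zilber's Indecomposability Theorem, which is the true engine of the proof.
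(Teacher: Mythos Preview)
The paper does not prove this statement; it is quoted as a Fact with a citation to \cite[Theorem~9.5]{bn} and used as a black box, so there is no in-paper argument to compare against. Your outline is essentially the classical proof of Zilber's Field Theorem specialised to the rank-$1$ situation, and the overall architecture --- build the enveloping ring $R$ of $G$ inside $\operatorname{End}(V)$ via Zilber Indecomposability, show $R$ is an algebraically closed field via Macintyre, and read off $V \cong K^+$ with $G \leqslant K^*$ --- is exactly the standard one.

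The one place where your sketch wobbles is the passage through the Jacobson radical. You correctly observe that every nonzero $r \in R$ has finite kernel and image $rV = V$ (by strong minimality and connectedness of $V$), but the clean conclusion from this is that $R$ is already an integral domain: if $r,s \ne 0$ then $(rs)V = r(sV) = rV = V$, so $rs \ne 0$. Since $R$ is connected (this is precisely what Indecomposability buys you), for $r \ne 0$ the definable injective additive endomorphism $x \mapsto rx$ of the connected group $(R,+)$ of finite Morley rank is automatically surjective; hence $r$ is a unit and $R$ is a field outright, with no quotient needed. Your detour through $R/\operatorname{rad}(R)$ is not fatal --- the radical is in fact zero, since a nilpotent $r$ would satisfy $r^nV=0$ while $r\ne 0$ forces $r^nV=V$ --- but the sentence ``a commutative Artinian ring with a faithful simple module $V$'' is where a referee would stop you: neither ``Artinian'' nor ``$V$ simple over $R$'' has been justified, and in the elementary abelian $p$ case $V$ can a priori have finite nonzero $G$-fixed subgroups, so simplicity is not immediate. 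Replace that step with the direct integral-domain-plus-connectedness argument above and the proof goes through.
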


\begin{fact} {\rm (Deloro} {\rm \cite{Deloro09})} \label{deloro} Assume that $G$ and $V$ are connected groups of \fmr such that $G$ is non-solvable, $V$ is abelian of Morley rank $2$, and $G$ acts on $V$ faithfully and  minimally. Then there exists an \acf $K$ such that the action $G\curvearrowright V$ is equivalent to the action $\operatorname{GL}_2(K)\curvearrowright K^2$ or $\operatorname{SL}_2(K)\curvearrowright K^2$.
\end{fact}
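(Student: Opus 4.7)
The plan is to use the smallness of $\rk(V)=2$ to recover an underlying field via Zilber's Field Theorem and then identify $G$ inside $\operatorname{GL}(V)$. By Macintyre's structure theorem for abelian groups of \fmrd, the group $V$ is either torsion-free divisible or an elementary abelian $p$-group; in neither case is a field structure present a priori, so it will have to be interpreted from the action itself.

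First I would produce a proper definable connected subgroup $H\leqslant G$ leaving an infinite proper subgroup $W<V$ invariant. Minimality of the action constrains $G$ itself, not its subgroups, so this is consistent; a natural candidate is $H=G_W^\circ$ for a ``line'' $W$ of Morley rank $1$, whose existence should follow from rank/genericity considerations together with the observation that every proper connected definable subgroup of $V$ has rank $1$. Equivalently, one can start with a Borel subgroup $B$ of $G$ (which exists since $G$ is non-solvable and connected of \fmrd): by the descending chain condition on definable subgroups, $B$ must stabilise an infinite proper subgroup of $V$.

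Once such a pair $(H,W)$ is in hand, an infinite abelian section of $H$ acting faithfully on $W$ with trivial centraliser in $W$ yields, via Zilber's Field Theorem, an \acf $K$ such that $W\cong K^+$ and the abelian section embeds into $K^\times$. The next and crucial step is to \emph{globalise} this field: using elements of $G$ that do not stabilise $W$ --- available because $G$ is non-solvable and therefore moves $W$ around --- I would glue the field structures on two transverse lines of $V$ into a single structure of a $2$-dimensional $K$-vector space on $V$. Concretely, this amounts to interpreting $K$ in the pair $(G,V)$ through a Zilber--Hrushovski configuration built from two non-commuting abelian ``unipotent-like'' subgroups of $G$ together with their orbits on $V$.

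Finally, once $V$ has been identified with $K^2$ and $G$ with a connected non-solvable definable subgroup of $\operatorname{GL}_2(K)$ acting minimally, the classification of connected algebraic subgroups of $\operatorname{GL}_2$ over an \acf forces $G$ to be either $\operatorname{SL}_2(K)$ or $\operatorname{GL}_2(K)$, and the action is the natural one. I expect the main obstacle to be this globalisation step: extracting a \emph{global} field structure on $V$ from a local one on a single invariant line, without presupposing any algebraic-group structure on $G$. This is precisely where the specific toolkit for groups of \fmrd\ acting on small abelian groups --- unipotence theory, analysis of definable tori, and identification theorems \`a la Cherlin --- has to be brought in.
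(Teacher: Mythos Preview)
The paper does not prove this statement at all: it is recorded as a \emph{Fact} with a citation to Deloro's paper \cite{Deloro09} and used as part of the basis of induction (the cases $\rk(V)\leqslant 3$). There is therefore no proof in the present paper against which to compare your proposal.

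As for the proposal itself, the overall shape is the right one --- find a rank-$1$ invariant line for a Borel, interpret a field there via Zilber, globalise to $V$, and then read off $G$ inside $\operatorname{GL}_2(K)$ --- and you correctly flag the globalisation step as the crux. But as written this is an outline rather than a proof: you have not said how to get from a local field on one line to a $K$-vector space structure on all of $V$ compatible with the $G$-action, and the phrase ``Zilber--Hrushovski configuration built from two non-commuting abelian unipotent-like subgroups'' is gesturing at something that in Deloro's actual argument requires a careful case split (according to whether the solvable radical of $G$ is trivial or not, and whether there is non-trivial ``unipotence'' in the relevant characteristic) together with identification of a rank-$3$ quasisimple component as $\operatorname{(P)SL}_2$. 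In particular, the claim that a Borel of $G$ ``must stabilise an infinite proper subgroup of $V$'' does not follow from DCC alone and is one of the genuinely delicate points. So your plan is headed in the right direction, but the hard middle step is asserted rather than carried out; for the details you should consult \cite{Deloro09}.
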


\begin{fact} {\rm (Borovik and Deloro {\rm \cite{Borovik-Deloro}}, and Fr\'econ}   {\rm \cite{Frecon2018})}. \label{bordel} Assume that $G$ and $V$ are connected groups of \fmr such that $G$ is non-solvable, $V$ is abelian of Morley rank $3$, and  $G$ acts on $V$ faithfully and minimally.
	Then there exists an \acf $K$ such that the action $G\curvearrowright V$ is equivalent to the adjoint action $\operatorname{PSL_2}(K)\times Z(G)\curvearrowright K^3$ or to the natural action $\operatorname{SL_3}(K)* Z(G)\curvearrowright K^3$.
\end{fact}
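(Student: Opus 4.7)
The plan is to combine the rank 1 and rank 2 classifications (Facts \ref{zilberaction} and \ref{deloro}) with low-rank structure theory of simple groups of \fmrd, most importantly Frécon's theorem \cite{Frecon2018} that any infinite simple group of Morley rank $3$ is $\operatorname{PSL}_2(K)$ for some \acf $K$. By Macintyre's theorem, $V$ is either an elementary abelian $p$-group or a divisible torsion-free group, and I would split the argument into these cases; the characteristic-$2$ situation would benefit from the classification of even type simple groups, and the torsion-free case from the Loveys--Wagner linearisation theorem.

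First, I would pin down the structure of $G$. Since $G$ is connected and non-solvable, it contains a minimal definable connected normal non-solvable subgroup $H$; writing $Z = C_G(H)^\circ$, the descending chain condition together with minimality of the action on $V$ should yield $G = H\cdot Z$ with $Z \leqslant Z(G)$. Consequently $H/Z(H)$ is simple, and I must bound $\rk(H)$. Using a Borel-like subgroup $B \leqslant H$ and a generic vector $v\in V$, the orbit $Bv$ has rank at most $\rk(V)=3$, which constrains the rank of $B$ and hence of $H$ via a torus/Cartan analysis. I expect $H/Z(H)$ to have Morley rank $3$ or $8$, corresponding to the two families in the conclusion.

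Second, I would locate a definable \acf $K$ inside the configuration. Pick a definable connected abelian subgroup $T\leqslant H$ acting non-trivially on some rank $1$, $T$-invariant subgroup $V_0\leqslant V$ (obtained from a Jordan--H\"older-style analysis of $V$ under $T$); then Fact \ref{zilberaction} produces $K$ together with an embedding of the $T$-action into $K^*\curvearrowright K^+$. The next key task is to exploit involutions (in odd characteristic or the torsion-free case) and Weyl-like elements in the spirit of Lemma \ref{lm:existence-of-hyperoctahedral} to decompose $V$ into a direct sum of $T$-weight spaces, giving a definable identification $V\cong K^3$. Once this linear structure is in place, $H$ embeds definably into $\operatorname{GL}_3(K)$, and the classical classification of connected algebraic subgroups of $\operatorname{GL}_3$ acting irreducibly leaves only the two listed configurations: $\operatorname{PSL}_2(K)$ in its $3$-dimensional adjoint representation, and $\operatorname{SL}_3(K)$ in its natural representation.

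The hard part will be the second step --- producing the field and the linear structure on $V$. Without the Cherlin--Zilber conjecture there is no a priori reason for $H$ to be algebraic, and the whole point of Frécon's theorem is to supply exactly this rigidity in the $\operatorname{PSL}_2$ sub-case, where $H/Z(H)$ has Morley rank $3$ and acts on $V$ via an analogue of the adjoint action. In the $\operatorname{SL}_3$ branch the difficulty is amplified by having to show that $H$ has rank $8$ and splits cleanly from its centre; handling the possibility that $H/Z(H)$ might be of degenerate type (and showing it cannot occur under our hypotheses) is where the bulk of the combined Borovik--Deloro and Frécon technology is spent.
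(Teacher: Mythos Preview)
The paper does not prove this statement. Fact~\ref{bordel} is listed under ``Basis of induction'' alongside Facts~\ref{zilberaction} and~\ref{deloro} as a result imported from the literature (Borovik--Deloro \cite{Borovik-Deloro} combined with Fr\'econ \cite{Frecon2018}); no argument is given in the paper itself, so there is nothing here to compare your proposal against.

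That said, a brief comment on your sketch relative to the actual sources. Your overall architecture --- isolate a quasisimple factor $H$, bound its rank, manufacture a field, linearise --- is broadly in the right spirit, but you have the logical dependency on Fr\'econ's theorem slightly misplaced. In \cite{Borovik-Deloro} the dichotomy $\operatorname{PSL}_2$ versus $\operatorname{SL}_3$ is obtained \emph{conditionally}, under the hypothesis that no bad group of rank $3$ exists; Fr\'econ's contribution is precisely to discharge that hypothesis by proving every simple group of Morley rank $3$ is $\operatorname{PSL}_2(K)$. So Fr\'econ is not used to ``supply rigidity in the $\operatorname{PSL}_2$ sub-case'' after a rank dichotomy has been established; rather, it removes an obstruction that would otherwise block the entire classification. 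Also, your proposed route to the field via a rank~$1$ $T$-invariant subgroup of $V$ and Fact~\ref{zilberaction} is optimistic: there is no reason a torus $T$ should leave a rank~$1$ subgroup of $V$ invariant, and in the adjoint $\operatorname{PSL}_2$ configuration in characteristic $2$ it does not. The actual argument in \cite{Borovik-Deloro} proceeds through a careful centraliser and unipotence analysis rather than a direct weight-space decomposition of the kind you describe.
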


\subsection{Chevalley groups, or How to identify a simple group of \fmr as a simple algebraic group?}

\

From the very beginning we were applying a general principle of project management:
\bq
\emph{identify your target and do the reverse analysis from the target to the present situation}.
\eq
As a rule, the final identification of a simple group $G$ of \fmr as an algebraic group is done by proving that $G$ is a Chevalley group (cf. \cite{GLS-3-1998,Malle-Testerman2011,Steinberg2016}) over an \acfd. We have not seen a single proof done by directly showing that $G$ is an algebraic variety over an \acf and that the group operations on $G$ are compatible with its structure as an algebraic variety. For example, the classification result for simple groups of even type \cite{abc} actually sounds as
\bq
A simple group of \fmr and even type is a Chevalley group over an \acf of characteristic $2$,
\eq
and therefore has, as a corollary, an identification theorem for algebraic groups in characteristic $2$:
\bq
A simple algebraic group over an \acf of characteristic $2$ is a Chevalley group.
\eq
So we needed to find a place for Chevalley groups in our project.

\subsection{Identification of the General Linear Group}

Since we took the  top-to-bottom  approach to our problem and expected the general linear group to appear in it, the first step was to prove some suitable identification theorems for the general linear group. So as a first step, we decided to obtain a version of Phan's theorem \cite{phan} for ${\operatorname{GL}}_n$ in the \fmr context.

{Why? Because} Phan's theorem (1970), which is a special case of Curtis--Tits type theorems for identification of Chevalley groups (discussed later), gives an identification of the finite special linear group $\operatorname{SL}_n(q)$ in the class of finite groups via their maximal tori and the corresponding root $\operatorname{SL}_2$-subgroups. In his proof, Phan constructed a $BN$-pair and showed that its Weyl group is isomorphic to the symmetric group $\symm_n$ \cite{phan}.

\begin{fact} {\rm (Phan \cite{phan})} Let $G$ be a finite group with subgroups $L_1,\ldots, L_n$ and $q$ be an odd prime power such that $q\neq 3$. Assume the following.
		
		\begin{itemize}[noitemsep,topsep=0pt,parsep=0pt,partopsep=0pt]	
			
			\item $G=\langle L_1,\ldots, L_n\rangle$.
			\item Each $L_k$ is isomorphic to $\sll_2(q)$.
			\item For every $1\leqslant k\leqslant n-1$, $\langle L_k, L_{k+1}\rangle$ is isomorphic to $\sll_3(q)$.	\item For every $1\leqslant k,l\leqslant n$ satisfying $|k-l|>1$, $L_k$ and $L_l$ commute. 	\item There exist elements $t_i, \in L_i$, $i = 1,\dots, n$, of order $q-1$ such that $\langle t_k,t_{k+1}\rangle$ is an abelian group of order $(q-1)^2$.
		\end{itemize}
		Then $G$ is isomorphic to a quotient of $\sll_{n+1}(q)$.
		
\end{fact}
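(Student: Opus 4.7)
The plan is to follow Phan's own approach, as flagged in the excerpt: build a $BN$-pair of type $A_n$ in $G$, identify its Weyl group with $\symm_{n+1}$, and then invoke Tits' classification of spherical irreducible $BN$-pairs of rank $\geqslant 3$ to conclude that $G$ is a central quotient of the simply connected Chevalley group $\sll_{n+1}(q)$. The cases $n=1,2$ are directly subsumed in the hypotheses, so assume $n\geqslant 3$. Inside each $L_k\cong \sll_2(q)$, the element $t_k$ generates a split maximal torus $H_k$ of order $q-1$; fix opposite unipotent root subgroups $U_k^+,U_k^-$ of order $q$ normalised by $H_k$, and an element $n_k\in N_{L_k}(H_k)\smallsetminus H_k$ that inverts $t_k$. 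Set
\[
T=\langle t_1,\ldots,t_n\rangle,\quad U=\langle U_1^+,\ldots,U_n^+\rangle,\quad B=TU,\quad N=\langle T,n_1,\ldots,n_n\rangle.
\]
The commuting hypotheses force $T$ to be abelian: adjacent pairs $t_k,t_{k+1}$ by the last bullet and non-adjacent pairs because the corresponding $L_k,L_l$ commute.

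Writing $w_k$ for the image of $n_k$ in $W=N/T$, the $\sll_3(q)$-structure on $\langle L_k,L_{k+1}\rangle$ forces $(n_kn_{k+1})^3\in T$, the commuting hypothesis for $|k-l|>1$ forces $(n_kn_l)^2\in T$, and $n_k^2\in H_k\leqslant T$. Hence the $w_k$ satisfy the defining relations of the Coxeter group $\symm_{n+1}$, exhibiting $W$ as a quotient of $\symm_{n+1}$. It then remains to verify the $BN$-pair axioms: $G=\langle B,N\rangle$ from $G=\langle L_1,\ldots,L_n\rangle$ and $L_k\subseteq\langle B,N\rangle$; $T=B\cap N\lhd N$; and the exchange conditions
\[
w_kBw\subseteq Bw_kwB\cup BwB,\qquad w_kBw_k\not\subseteq B.
\]
Once these are in place, a standard length computation forces $W\cong \symm_{n+1}$, and Tits' theorem identifies $G$ as a quotient of the group of $\mathbb F_q$-points of a simply connected Chevalley group of type $A_n$, i.e.\ as a quotient of $\sll_{n+1}(q)$.

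The main obstacle will be the exchange conditions, which express that the Bruhat double cosets $BwB$ for distinct $w\in W$ are actually distinct in $G$. This requires controlling commutators $[U_k^{\pm},U_l^{\pm}]$ across the whole Dynkin diagram using only the rank-$1$ and rank-$2$ data, i.e.\ reconstructing a global Chevalley commutator formula from the given amalgam of $\sll_2(q)$'s and $\sll_3(q)$'s, and in particular ruling out accidental collapses of the length function on $W$. This is precisely where the hypothesis $q\neq 3$ enters: for $q=3$ the group $\sll_2(3)$ is solvable and $\sll_3(3)$ does not possess enough commutator identities to distinguish a genuine $A_2$ configuration from degenerations, so the amalgam approach genuinely breaks down. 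A natural alternative, which I would keep in reserve, is to recast the whole argument as a Curtis--Tits amalgamation: realise $G$ as a quotient of the universal completion of the standard Curtis--Tits amalgam of $\sll_{n+1}(q)$ along the $A_n$-diagram, which delegates the combinatorial bookkeeping above to a single citation.
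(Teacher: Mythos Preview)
The paper does not prove this statement; it is recorded as a \emph{Fact} attributed to Phan \cite{phan}, with only a one-line summary of his method: ``In his proof, Phan constructed a $BN$-pair and showed that its Weyl group is isomorphic to the symmetric group.'' Your plan is exactly that strategy, so at the level of approach there is nothing to compare --- you are reconstructing what the paper reports Phan did.

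That said, as a proof sketch yours is honest but incomplete in the place that matters. You correctly set up $T$, $B$, $N$, show $W=N/T$ satisfies the Coxeter relations of type $A_n$, and then write ``It then remains to verify the $BN$-pair axioms'' and ``The main obstacle will be the exchange conditions.'' That obstacle is the entire content of Phan's paper: establishing the global commutator relations among the $U_k^{\pm}$ from only the local $\sll_2$/$\sll_3$ data, and in particular showing that distinct Weyl elements give distinct Bruhat cells (so that $W$ really is $\symm_{n+1}$ and not a proper quotient), is not a routine check. Your remark that ``a standard length computation forces $W\cong\symm_{n+1}$'' puts the cart before the horse: the length function is well-defined only once the exchange condition holds, and proving the exchange condition is equivalent to ruling out the collapses you mention. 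Your fallback suggestion to invoke Curtis--Tits amalgamation is sound and is in fact how the paper itself frames the later, infinite analogues (Theorems \ref{Lyons} and onward), but note that Phan's 1970 paper predates both Tits' 1974 building classification and the modern amalgam formulation, so a faithful reconstruction of his argument would be a hands-on $BN$-pair verification rather than a single citation.

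Minor point: your explanation of why $q=3$ is excluded is not quite the right one. The issue is not commutator identities in $\sll_3(3)$ per se, but that the diagonal torus in $\sll_2(3)$ has order $q-1=2$, so the elements $t_k$ are involutions and the torus $T$ is too small to separate root groups and to make the condition $|\langle t_k,t_{k+1}\rangle|=(q-1)^2$ carry real information.
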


A \fmr version of Phan's Theorem was proved by the first author in 1998. In the following statement, $\operatorname{(P)SL}_3(K)$ stands for a quotient group of $\operatorname{SL}_3(K)$ by a central subgroup.

\begin{our}\cite[Theorem 1.2]{berkmanthesis} Let $G$ be a simple group of \fmr which does not interpret any bad fields with subgroups $L_1,\ldots, L_n$ for $n\geqslant 3$.  Assume the following holds where $K$ is a fixed \acfd.
		
		\begin{itemize}[noitemsep,topsep=0pt,parsep=0pt,partopsep=0pt]	
			
			\item $G=\langle L_1,\ldots, L_n\rangle$.
			\item Each $L_k$ is isomorphic to $\sll_2(K)$.
			\item For every $1\leqslant k\leqslant n-1$, $\langle L_k, L_{k+1}\rangle$ is isomorphic to $\operatorname{(P)SL}_3(K)$.	
			\item For every $1\leqslant k,l\leqslant n$ satisfying $|k-l|>1$, $L_k$ and $L_l$ commute. 	\item There exist definable subgroups $A_k\leqslant L_k$ isomorphic to $K^*$ such that $\langle A_k,A_{k+1}\rangle\cong K^*\times K^* $.
			\item $\langle L_k,A_{k+1}\rangle=\langle L_k,A_{k-1}\rangle=\operatorname{GL}_2(K)$.
		\end{itemize}
		Then $G$ is isomorphic to  $\operatorname{PSL}_{n+1}(K)$.		
\end{our}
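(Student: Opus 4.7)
The strategy is to reconstruct inside $G$ the standard $BN$-pair of $\operatorname{PSL}_{n+1}(K)$, in the spirit of Phan's original argument, but working throughout in the definable category. The hypotheses already supply all the required local algebraic data: each $L_k$ carries a fixed $\sll_2(K)$ structure, and each rank-two amalgam $\langle L_k,L_{k+1}\rangle$ is either $\sll_3(K)$ or its quotient by the centre. So the task is not to manufacture algebraic structure from nothing but to glue the given local pieces into a global geometry of type $A_n$, and then apply a Curtis--Tits / Steinberg-style identification.

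First, within each $L_k$ I would fix the two opposite root subgroups $U_k^+,U_k^-$ normalised by $A_k$, together with a Weyl representative $w_k\in L_k$ that swaps them. Inside $\langle L_k,L_{k+1}\rangle\cong\operatorname{(P)SL}_3(K)$, the hypothesis $\langle L_k,A_{k+1}\rangle=\operatorname{GL}_2(K)$ pins down the action of $A_{k+1}$ on $L_k$ as that of a diagonal torus of $\operatorname{GL}_2(K)$ on its derived $\sll_2(K)$; the symmetric hypothesis handles $A_{k-1}$, and commutation for $|k-l|>1$ is inherited from $[L_k,L_l]=1$. This puts $T=\langle A_1,\ldots,A_n\rangle\cong (K^*)^n$ into standard diagonal form and realises each $U_k^\pm$ as a definable, connected, one-dimensional simple root group for the $A_n$ diagram.

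Next I would read off the Steinberg relations directly from the known algebraic structure of the rank-two subgroups: inside $\operatorname{(P)SL}_3(K)$ the commutator $[U_k^+,U_{k+1}^+]$ produces a third root subgroup with the expected bilinear form, while $[U_k^\pm,U_l^\pm]=1$ whenever $|k-l|>1$. These are precisely the Chevalley relations for $A_n$ over $K$. Now there are two equivalent endgames. The quick one invokes Steinberg's presentation to produce a surjection $\sll_{n+1}(K)\twoheadrightarrow G$ whose kernel is central by simplicity of $G$, and hence $G\cong\operatorname{PSL}_{n+1}(K)$. The more faithful-to-Phan route forms $U^+=\langle U_1^+,\ldots,U_n^+\rangle$, $B=TU^+$, $N=\langle T,w_1,\ldots,w_n\rangle$, verifies the braid relations $w_kw_{k+1}w_k=w_{k+1}w_kw_{k+1}$ inside the rank-two subgroups and $w_kw_l=w_lw_k$ for $|k-l|>1$, obtains a Weyl group $W=N/T\cong\symm_{n+1}$, checks the $BN$-axioms, and applies Tits' classification of saturated spherical $BN$-pairs of rank $\geqslant 2$.

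The main obstacle is not the combinatorics but the definability bookkeeping, and this is where the ``no bad field'' hypothesis is indispensable. In the abstract \fmrd\ setting one needs each $U_k^\pm$ and $T$ to be definable \emph{and} to rigidly mimic their algebraic namesakes: no definable proper infinite subgroup of $A_k\cong K^*$ may intrude, and the definable closure of the abstract product of the $n(n+1)/2$ positive root groups must coincide with that product itself, with the correct Morley rank. Ruling out bad fields prevents stray definable subgroups inside the tori and lets one transfer the rigidity of the algebraic setting into the definable one; granted this, the amalgamation argument proceeds verbatim, and the identification of $G$ with $\operatorname{PSL}_{n+1}(K)$ follows.
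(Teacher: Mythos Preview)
The paper does not give a self-contained proof of this statement; it is quoted from Berkman's 1998 thesis with only the remark that the argument follows Phan's original scheme: build a $BN$-pair and verify that the Weyl group is the symmetric group. Your proposal is exactly this, carried out in the definable category, and your identification of the role of the ``no bad field'' hypothesis (rigidity of the tori $A_k\cong K^*$, control of definable subgroups and ranks) matches what the paper says about why that assumption was needed at the time. So your approach agrees with the one the paper attributes to the original proof.

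One small comment: you offer two endgames, the Steinberg-presentation surjection and the Tits $BN$-pair classification. The paper's narrative (and the thesis it cites) points to the second, Phan-style route; the Curtis--Tits/Lyons amalgam viewpoint you also allude to is precisely what the paper introduces \emph{next} as the more modern replacement that superseded this theorem in the project. So both of your endgames are legitimate, but the historically accurate one for this particular result is the $BN$-pair construction with $W\cong\symm_{n+1}$.
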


The technical assumption `does not interpret any bad fields' could be eliminated, in the specific context of proof of Theorem \ref{th:real}, with relative ease, but, by the time we started our project in 2011,  much more powerful tools had been developed, and we were happy to use them.

\subsection{Curtis-Tits Theorems}

By a Curtis--Tits type theorem, we mean an identification theorem of a group with a (quasi)simple Chevalley group by checking the `pairwise behaviour' of  its future root $\operatorname{SL}_2$-subgroups.  Richard Lyons  \cite[Section 29]{lyons} clarified the nature of these results as a statement about amalgams of groups and showed that they  were valid for infinite groups as well (see also  \cite[Section 5]{clinv}, \cite[Section 2.2]{lstar}). The first author interpreted Lyons's results from \cite[Section 29]{lyons} in the form convenient for application to groups of \fmr in \cite{clinv}.

\begin{our} \label{Lyons}   {\rm (Lyons \cite[Section 29]{lyons}, Berkman  \cite[Fact 5.2]{clinv})}
	Let\/ $G$ be a group,
	${F}$ an algebraically closed field and\/ $I$
	one of the connected Dynkin diagrams
	of the simple algebraic groups of Tits rank at least $3$.
	Let\/ $\{K_i:i\in I\}$ be a collection
	of subgroups of $G$ centrally
	isomorphic to $PSL_2({ F})$ indexed by $I$,
	and let\/ $T_i <  K_i$ be a maximal torus in
	$K_i$ for\/ $i\in I$.
	Also assume that the following statements hold.
	\begin{itemize}
		\item[{\rm (1)}] $G=\langle K_i\mid i\in I\rangle$.
		
		\item[{\rm (2)}] $[T_i,T_j]=1$ for all $i,j \in I$.
		
		\item[{\rm (3)}] $[K_i,K_j]=1$, if $i\ne j$ and $(i,j)$ is not an edge
		in $I$.
		
		\item[{\rm (4)}] $G_{ij} =\langle K_i,K_j\rangle$ is centrally isomorphic to
		$PSL_3({ F})$  if\/ $(i,j)$ is
		a single edge in $I$.
		
		\item[{\rm (5)}] $G_{ij}=\langle K_i,K_j\rangle$ is centrally isomorphic to
		$PSp_4({ F})\cong PSO_5({
			F})$  if $(i,j)$ is a double edge in $I$.
		Moreover, in that case, if $r_i$ and $r_j$ are involutions in
		$N_{K_i}(T_iT_j)$ and $N_{K_j}(T_iT_j)$, then the order of the element
		$r_ir_j$ in $N_{G_{ij}}(T_iT_j)/T_iT_j$ is $4$.
		
		\item[{\rm (6)}] $K_i$, $K_j$
		are root\/ $SL_2$-subgroups of\/ $G_{ij}$
		corresponding to the maximal torus $T_iT_j$ of\/ $G_{ij}$.
	\end{itemize}

	Then there is a homomorphism from the corresponding simply connected
	simple algebraic group $G'$ of type $I$ over $F$ onto
	$G$, under which the root $SL_2$-subgroups $K'_i$ for some
	simple root system of $G'$ correspond to the subgroups $K_i$.
	
\end{our}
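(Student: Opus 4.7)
The plan is to realise this statement as an instance of the Curtis--Tits presentation of simply connected Chevalley groups, in its amalgam form: the simply connected simple algebraic group $G'$ of type $I$ over $F$ is the universal completion of the amalgam of its rank-$1$ and rank-$2$ subgroups coming from a choice of simple roots. Fix in $G'$ root $SL_2$-subgroups $K'_i$ indexed by $i\in I$ and corresponding maximal tori $T'_i<K'_i$, and set $G'_{ij}=\langle K'_i,K'_j\rangle$. Then $G'_{ij}$ is centrally isomorphic to $PSL_3(F)$, to $PSp_4(F)\cong PSO_5(F)$, or to the commuting product $K'_iK'_j$ according as $(i,j)$ is a single edge, a double edge, or a non-edge of $I$.

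The key ingredient that I would invoke is the classical Curtis--Tits theorem in its amalgam form: when $I$ is connected of Tits rank at least $3$, the group $G'$ is the universal completion of the amalgam
\[
\mathcal{A}' \;=\; \{\, T'_i,\; K'_i,\; G'_{ij} : i,j\in I\,\}
\]
with its natural inclusions as structure maps. This is proved for Chevalley groups over arbitrary fields in Tits's work on spherical buildings and is the underlying mechanism of the Lyons formulation cited here.

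Next I would show that the hypotheses on $G$ assemble the data $\{T_i,K_i,G_{ij}\}$ into an amalgam $\mathcal{A}$ of the same shape, and construct abstract isomorphisms $\phi_{ij}\colon G'_{ij}\to G_{ij}$. Hypotheses (3), (4), (5) supply such $\phi_{ij}$ for non-edges, single edges, and double edges respectively. In (5) the extra requirement that $r_ir_j$ has order $4$ pins down the Weyl group $N_{G_{ij}}(T_iT_j)/T_iT_j$ as dihedral of order $8$ and so identifies the pair $K_i,K_j$ as root subgroups for a $B_2/C_2$ system in $PSp_4(F)$ rather than for some other sub-configuration of root $SL_2$'s. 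Hypothesis (6) lets me arrange that $\phi_{ij}$ sends $K'_i,K'_j$ onto the prescribed root $SL_2$-subgroups $K_i,K_j\leqslant G_{ij}$, and (2) identifies the tori.

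The main technical obstacle is the gluing: the piecewise isomorphisms $\phi_{ij}$ must be made compatible on shared pieces, which for adjacent pairs $(i,j)$ and $(j,k)$ means that $\phi_{ij}$ and $\phi_{jk}$ have to agree on $K'_j$ and on $T'_j$. I would address this by a standard diagonal-adjustment along a spanning tree of $I$: after fixing one $\phi_{ij}$, the residual freedom in each subsequent $\phi_{jk}$ is exactly composition with an inner automorphism of $G_{jk}$ by an element of $T_jT_k$, and this freedom is enough to force agreement on each common torus, using that the Dynkin diagrams allowed in the hypothesis have no triple bonds. Once the amalgam morphism $\mathcal{A}'\to\mathcal{A}$ has been built, the universal property of $G'$ yields a group homomorphism $\varphi\colon G'\to G$ with $\varphi(K'_i)=K_i$; surjectivity is immediate from hypothesis (1), and we obtain the desired presentation of $G$ as a quotient of $G'$.
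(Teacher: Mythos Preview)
The paper does not give its own proof of this statement: it is quoted as an external result, attributed to Lyons \cite[Section~29]{lyons} with the formulation for the finite Morley rank setting taken from \cite[Fact~5.2]{clinv}. The surrounding text explains that Lyons ``clarified the nature of these results as a statement about amalgams of groups and showed that they were valid for infinite groups as well,'' but no argument is reproduced.

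Your proposal is a faithful outline of precisely that amalgam approach: take the Curtis--Tits presentation of the simply connected group $G'$ as the universal completion of its rank-$1$ and rank-$2$ amalgam, use hypotheses (3)--(6) to build local isomorphisms onto the pieces $\{T_i,K_i,G_{ij}\}$ in $G$, adjust along a spanning tree to make them coherent, and invoke the universal property. This is the method of the references the paper cites, so there is nothing to correct in your overall strategy. One small caution: the ``diagonal adjustment'' step hides real work---the full freedom on $G'_{ij}$ is its automorphism group, and showing that adjustment by torus elements suffices to achieve coherence (and that no obstruction arises on cycles of $I$, not just along a spanning tree) is where the rank $\geqslant 3$ hypothesis and the absence of triple bonds are genuinely used. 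You gesture at this correctly but would need the details from \cite{lyons} or \cite{clinv} to make it rigorous.
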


In the \fmr context, it  was used by us in the proof of the following Generic Identification Theorem  developed by us for a different and more general project directed at the proof of the Cherlin-Zilber Algebraicity Conjecture.

\subsection{A Generic Identification Theorem}

Lyons's Theorem \ref{Lyons} was used in {the} proof of a theorem which  identifies `large' simple groups of \fmr with some specific conditions as Chevalley groups.

\begin{our} {\rm \cite[Theorem 1.2]{git}} Let $G$ be a simple  $K^*$-group of finite
	Morley rank, $p$ a prime, and $D$ a maximal $p$-torus  in $G$ of Pr\"{u}fer rank at least $3$. Assume that
	$G=\langle C^\circ_G(x) \mid x \in D, \;|x|=p\rangle$,
	and for every element $x$ of order
	$p$ in $D$, the group $C^\circ_G(x)$ is of $p'$-type and
	$C^\circ_G(x)=F^\circ(C^\circ_G(x))E(C^\circ_G(x))$.
	Then $G$ is a Chevalley group over an \acf of characteristic not $p$.
\end{our}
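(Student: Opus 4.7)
The plan is to apply Lyons's Curtis--Tits amalgamation theorem (Theorem 6 above) to a family of root $SL_2$-subgroups of $G$ read off from the centralisers $C^\circ_G(x)$ of elements $x\in D$ of order $p$. Since $G$ is generated by those centralisers and $D$ has Pr\"{u}fer rank at least $3$, one expects a connected Dynkin diagram $I$ of Tits rank at least $3$ to emerge; Lyons's theorem will then produce a homomorphism from the simply connected Chevalley group of type $I$ over some \acfd\ $F$ onto $G$, and simplicity of $G$ will yield the identification.

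The first step is to unpack each centraliser using the $K^*$-hypothesis. The assumption $C^\circ_G(x) = F^\circ(C^\circ_G(x))\,E(C^\circ_G(x))$ exhibits the centraliser as the product of its (solvable) generalised Fitting part and its layer; the layer is a central product of quasisimple proper sections of $G$, hence Chevalley groups over \acfs\ by the $K^*$-hypothesis. The $p'$-type assumption forces these fields to have characteristic different from $p$, so that $D\cap L$ is a nontrivial toral subgroup of each component $L$, giving the correct ``semisimple'' picture inside $C^\circ_G(x)$ and, in particular, candidate root $SL_2$'s attached to rank-$1$ subtori of $D\cap L$.

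Next I would manufacture the global root datum. Choosing elements $x_i\in D$ of order $p$ whose centralisers collectively generate $G$, I would locate inside each $E(C^\circ_G(x_i))$ subgroups $K_i\cong (\mathrm{P})SL_2(F_i)$ whose associated maximal tori $T_i\le K_i\cap D$ commute pairwise and together generate $D$ modulo a finite subgroup. The critical computation is the analysis of pairs: for $x_i,x_j\in D$, the subgroup $\langle K_i,K_j\rangle$ lies inside $C^\circ_G(t)$ for a suitable $t\in D$ of order $p$, so the $K^*$-hypothesis again identifies it as a rank-$2$ Chevalley group, necessarily of type $A_1\times A_1$, $A_2$, $B_2$, or $G_2$; matching the structure of adjacent components via their shared torus will be used to force all $F_i$ to be canonically isomorphic to a single common field $F$.

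The main obstacle, I expect, is threefold: (i) upgrading abstract isomorphisms between the fields attached to adjacent components to \emph{equalities}, which requires rigidity of definable fields of \fmrd\ and careful tracking of the shared torus inside $D$; (ii) verifying the order-$4$ condition of Lyons's hypothesis (5) for any double edges, which depends on the precise shape of the rank-$2$ Weyl groups inside $N_G(T_iT_j)$; and (iii) excluding rank-$2$ overlaps of type $G_2$ as well as ruling out disconnected diagrams, so that the assembled root system is indeed one of the connected Dynkin diagrams of Tits rank at least $3$ to which Lyons's theorem applies. Once these points are settled, Lyons's theorem yields a surjection from the simply connected Chevalley group of type $I$ over $F$ onto $G$; its kernel is central, and the simplicity of $G$ identifies $G$ itself as a Chevalley group over $F$ in characteristic different from $p$.
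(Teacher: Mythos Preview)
The paper does not prove this statement in-line; it is quoted from \cite{git}, and only a three-line sketch is offered for the closely related stronger version, Theorem~\ref{th:strongGIT}. Your plan matches that sketch: use the $K^*$-hypothesis to identify the components of $E(C^\circ_G(x))$ as Chevalley groups in characteristic $\ne p$, extract root $\operatorname{SL}_2$-subgroups normalised by $D$, analyse them in pairs, and feed the resulting system to Lyons's Curtis--Tits theorem.

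The one difference worth flagging is organisational. The paper's sketch inserts an explicit intermediate step --- ``construct the Weyl group and show that it is one of the crystallographic reflection groups'' --- \emph{before} invoking Lyons. In your outline this work is scattered across points (i)--(iii), especially (iii) (assembling the root system, excluding $G_2$ edges and disconnected diagrams). In the actual argument the Weyl-group analysis, carried out on the finite section $N_G(D)/C_G(D)$ acting as a reflection group on the order-$p$ elements of $D$, is what produces the connected Dynkin diagram $I$ and labels its edges as single or double; only after that does one verify Lyons's hypotheses (4)--(6) edge by edge. So your plan is sound, but you should expect the crystallographic reflection-group classification to be the organising principle of the proof rather than a clean-up at the end, and your obstacles (i)--(iii) are largely absorbed into that step.
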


We explain terms used in this Theorem: a divisible abelian subgroup is called a {\em torus}.
A torus which is also a $p$-group is called a $p$-torus for
short. Note that a $p$-torus of finite Morley rank is a direct product of
finitely many copies of the Pr\"{ufer} $p$-group $C_{p^\infty}$. The number of copies of $C_{p^\infty}$
in the torus is called the Pr\"{u}fer rank of the torus.

	A group of finite Morley rank is said to be of {\em $p'$-type}, if
it contains no infinite abelian subgroup of exponent $p$.  A group \fmr whose infinite simple definable and connected sections are all algebraic groups over algebraically closed fields is called a {\em $K$-group}, and a group of \fmr whose  proper definable sections are all $K$-groups is called a {\em $K^*$-group}.

{There was one more Generic Identification Theorem, \cite{lstar}, also coming from Lyons's Theorem \ref{Lyons}, but it was not used in our project.}

\subsection{A version of a Generic Identification Theorem used in our project}

However, since the $K^*$-condition was difficult to check in our context, we first proved the following stronger version of the Generic Identification Theorem. We showed that the $K^*$-condition could be weakened to: proper definable subgroups  containing a fixed maximal $p$-torus  (for $p$ prime) are $K$-groups. Actually, we needed only the case $p=2$, as we hoped to build a large $2$-torus in $G$ from a hyperoctahedral subgroup arising from the multiple generic transitivity condition, see Lemma \ref{lm:existence-of-hyperoctahedral}.

\begin{our} \label{th:strongGIT} {\rm \cite[Theorem 1.1]{bbgeneric}}
	Let $G$ be a simple  group of finite
	Morley rank, $p$ a prime, and $D$ a maximal $p$-torus  in $G$	of Pr\"{u}fer rank at least $3$ such that
	every proper connected definable subgroup of $G$ which contains $D$ is a $K$-group. Assume that  $$G=\langle C^\circ_G(x) \mid x \in D, \;|x|=p\rangle,$$ and for
	every element $x$ of order
	$p$ in $D$, the group $C^\circ_G(x)$ is of $p'$-type and
	$C^\circ_G(x)=F^\circ(C^\circ_G(x))E(C^\circ_G(x))$.
	Then $G$ is a Chevalley group over an \acf of characteristic not $p$.	\end{our}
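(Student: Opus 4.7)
The strategy is to replay the proof of the earlier Generic Identification Theorem \cite{git} while tracking precisely where the $K$-ness of a proper definable subgroup is actually invoked, and verifying that in each such invocation the subgroup can be arranged to contain the given maximal $p$-torus $D$. The starting observation is that $D$ is abelian, hence $D \leqslant C^\circ_G(x)$ for every $x \in D$ of order $p$; the structural hypothesis $C^\circ_G(x) = F^\circ(C^\circ_G(x))E(C^\circ_G(x))$ and the $p'$-type condition therefore describe subgroups which are $K$-groups by the weakened assumption, and this is what is needed to launch the identification.

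The goal of the argument is to produce a configuration of root $SL_2$-subgroups satisfying the hypotheses of Lyons's Theorem \ref{Lyons}. Fix an element $x \in D$ of order $p$ and write $C^\circ_G(x) = F^\circ \cdot E$, where $E$ is the product of its quasisimple definable components; by $K$-ness and $p'$-type, each component of $E$ is a quasisimple algebraic group over an algebraically closed field of characteristic $\neq p$. Inside these components choose a maximal torus containing $D$, and use the Pr\"ufer-rank-at-least-$3$ hypothesis together with the generation assumption $G = \langle C^\circ_G(x) : x \in D, \; |x|=p \rangle$ to extract a Dynkin-diagram-indexed family $\{K_i : i \in I\}$ of $(P)SL_2(F)$-subgroups over a common algebraically closed field $F$, each normalised by $D$, together with maximal tori $T_i \leqslant K_i$ all lying in a fixed maximal torus $T$ of $G$ extending $D$. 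The pairwise checks required by Lyons's Theorem --- commutativity for non-adjacent nodes, $(P)SL_3(F)$ for single edges, $PSp_4(F)$ for double edges, and torus compatibility --- are then carried out inside the centralisers $C^\circ_G(x)$, the intersections $C^\circ_G(\langle x,y\rangle^\circ)$ for pairs of order-$p$ elements $x,y \in D$, and inside the rank-$2$ subgroups of the form $\langle K_i, K_j\rangle T$; all of these contain $D$ and are therefore $K$-groups by hypothesis. Once the amalgam data is assembled, Lyons's Theorem delivers a surjective homomorphism from the simply connected Chevalley group of type $I$ over $F$ onto $G$, and the simplicity of $G$ forces $G$ itself to be a Chevalley group over $F$.

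The principal obstacle is a delicate bookkeeping problem: every proper definable subgroup to which the $K$-hypothesis is applied must be verified to contain $D$, but \emph{a priori} a subgroup of the shape $\langle K_i, K_j \rangle$ need not. The remedy is to fix the torus $T \supseteq D$ at the outset and to replace, throughout the argument, each rank-$2$ group built from the $K_i$'s by its product with $T$; a Morley-rank count combined with the generation hypothesis keeps these enlarged subgroups proper in $G$ while ensuring they contain $D$. A secondary source of difficulty is the small-characteristic cases in which diagrams of type $B$, $C$, $F_4$ or $G_2$ force the appearance of $PSp_4(F)$ with a prescribed order of the Weyl-element product; these are handled by invoking the rank-$2$ identifications (Facts \ref{deloro} and \ref{bordel}) inside appropriate $D$-containing centralisers, where the required faithful minimal actions can be set up directly from the semisimple components produced above.
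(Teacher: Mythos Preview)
Your overall strategy---replay the original Generic Identification Theorem while tracking that every proper subgroup to which the $K$-hypothesis is applied contains $D$, then feed the resulting root-$SL_2$ data into Lyons's Theorem~\ref{Lyons}---is the same as the paper's. The paper's sketch, however, singles out one intermediate step that your plan glosses over: before one can speak of ``a Dynkin-diagram-indexed family $\{K_i:i\in I\}$'', one has to \emph{determine} $I$. The paper does this by first constructing the Weyl group $N_G(T)/T$ (for a maximal torus $T\supseteq D$) and proving it is a crystallographic reflection group; only then does one know which Dynkin diagram is in play and which rank-$2$ amalgams to look for. Your proposal jumps straight to ``extract a Dynkin-diagram-indexed family'', but nothing in the centraliser data $C^\circ_G(x)=F^\circ\cdot E$ hands you the diagram $I$ directly---you need the Weyl-group analysis for that, and it is where much of the technical work in \cite{bbgeneric} lives.

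A second point: your invocation of Facts~\ref{deloro} and~\ref{bordel} for the double-edge/$PSp_4$ verification is misplaced. Those results classify faithful irreducible actions of a connected non-solvable group on an abelian group of Morley rank $2$ or $3$; they are module-theoretic statements, not subgroup-identification theorems, and there is no abelian $V$ in sight here on which $\langle K_i,K_j\rangle$ acts minimally. The rank-$2$ identifications needed for Lyons's condition~(5) come instead from the fact that $\langle K_i,K_j\rangle T$ is a proper $K$-group containing $D$, so its semisimple part is algebraic and one can read off the isomorphism type directly---no appeal to small-module results is required or helpful.
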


The proof involves a lot of technical details, so we give only a brief description here all the details can be found in \cite{bbgeneric}. Let $G$ be a simple group of \fmr with some extra technical conditions; fix a maximal $p$-torus $D$ in $G$. First, find the (future) root $\operatorname{SL}_2$-subgroups in $G$ normalized by $D$, then construct the Weyl group and show that it is one of the crystallographic reflection groups, and finally, apply the Lyons version of Curtis--Tits theorem mentioned above to conclude that $G$ is a Chevalley group over an \acfd.

However, this identification still was not enough to identify the general linear group in our problem. Since our project is about group actions,  using the concept of a pseudoreflection subgroup turned out to be more useful in identifying the group ${\operatorname{GL}}_n(K)$ and its natural action on $K^n$. This goes as follows.

\subsection{Pseudoreflection subgroups}

From now on, $K$ is an \acfd, $G$ and $V$ are connected groups of \fmrd, $V$ is abelian and $G$ acts on $V$ definably and faithfully.

\begin{definition}  A connected definable abelian subgroup $R$ of $G$ is called  a {\em pseudoreflection subgroup}
	if $V=[V,R]\oplus
	C_V(R)$, and $R$ acts transitively on the nonzero elements of
	$[V,R]$.  Moreover, {the \emph{pseudoreflection rank}} $\psrk(G)$ is the maximal number of pairwise commuting pseudoreflection subgroups in $G$.
\end{definition}

Pseudoreflection groups made their first appearance in \cite[Section III.1]{abc} and perhaps in related papers on simple groups of \fmr and even type that preceded the book.

As a first identification theorem of the general linear group, we proved the following, where the stronger version of the Generic Identification Theorem{, Theorem \ref{th:strongGIT},} was used.

\begin{our} {\rm \cite[Theorem 1.2]{bbpseudo}} \label{fact-pseudo}	Let $G$ be a connected group acting on a connected abelian
	group $V$ faithfully and irreducibly.
	If $G$ contains a pseudoreflection subgroup $R$ such that $\rk[V,R]=1$, and $\psrk(G)=\rk(V)$, then $G\curvearrowright V$ is equivalent to
	${\operatorname{GL}}_n({K})\curvearrowright K^n$ for some \acf $K$, where $n=\rk(V)$.\end{our}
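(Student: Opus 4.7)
The plan is to combine the hypothesis $\psrk(G)=n=\rk(V)$, which yields a ``diagonal maximal torus'' acting with $n$ pairwise independent rank-one weight spaces on $V$, with the irreducibility of $G\curvearrowright V$, which forces a Weyl-type subgroup that permutes those weight spaces; once the resulting Chevalley data are assembled, the Generic Identification Theorem (Theorem \ref{th:strongGIT}) identifies $G$ as $\operatorname{GL}_n(K)$ and the action as the natural one.

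First I would fix a maximal pairwise commuting family of pseudoreflection subgroups $R_1,\ldots,R_n$. Mutual commutation forces each $R_j$ to stabilize the $R_i$-invariant decomposition $V=[V,R_i]\oplus C_V(R_i)$, and therefore to preserve every $V_i:=[V,R_i]$. Using $\rk[V,R_1]=1$ together with maximality of the family and $\rk(V)=n$, I would show $\rk V_i=1$ for every $i$ and that $V=V_1\oplus\cdots\oplus V_n$. Setting $T:=R_1\cdots R_n$ gives a connected abelian subgroup acting diagonalizably on $V$ with $n$ one-dimensional weight spaces, and Zilber's Field Theorem (Fact \ref{zilberaction}) upgrades each pair $(R_i,V_i)$ to an equivalence with $(K_i^*,K_i^+)$ for an algebraically closed field $K_i$.

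Next I would extract the Weyl group. Irreducibility prevents any $V_i$ from being $G$-invariant, so for every $i\neq j$ some element of $G$ carries $V_i$ onto $V_j$; after adjusting by an element of $T$ this yields a representative $w_{ij}\in N_G(T)$ transposing $V_i$ and $V_j$, and the setwise stabilizer of $\{V_1,\ldots,V_n\}$ inside $N_G(T)$ therefore surjects onto $\symm_n$. Using this $\symm_n$-action, for each pair $i\neq j$ I would locate a non-solvable connected subgroup of $G$ acting faithfully and minimally on the rank-$2$ abelian group $V_i\oplus V_j$; Deloro's Theorem (Fact \ref{deloro}) then identifies the induced action with $\operatorname{GL}_2(K)\curvearrowright K^2$, which both synchronizes the fields ($K_i\cong K_j\cong K$ as fields inside $G$) and produces a root $\operatorname{SL}_2$-subgroup $K_{ij}$ of $G$ acting as $\operatorname{SL}_2(K)$ on $V_i\oplus V_j$ and trivially on the remaining $V_k$.

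At this point the configuration of the $K_{ij}$ together with their pairwise $\operatorname{(P)SL}_3(K)$-relations and the torus $T$ is precisely an $A_{n-1}$-amalgam, so Theorem \ref{th:strongGIT} identifies $\langle K_{ij}\rangle$ as a Chevalley group of type $A_{n-1}$ over $K$. Combining this with $T\simeq(K^*)^n$ forces $G\simeq\operatorname{GL}_n(K)$, and the weight-space decomposition pins down the action as the natural representation on $V\simeq K^n$; the small cases $n\leqslant 3$, where Theorem \ref{th:strongGIT} does not apply, are handled directly by Facts \ref{zilberaction}, \ref{deloro}, and \ref{bordel}. The hardest step is the extraction of the Weyl group: irreducibility alone provides only a set-theoretic permutation of the $V_i$, and pulling these back to genuine Weyl elements of $N_G(T)$, verifying that the fields $K_i$ really do coincide inside $G$ rather than being only abstractly isomorphic, and then producing the full root $\operatorname{SL}_2$-subgroups $K_{ij}$ with the correct commutator relations is where essentially all the technical work lives.
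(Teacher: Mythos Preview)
Your architecture differs from the paper's in a fundamental way, and the difference is not merely stylistic. The paper argues by minimal counterexample: taking $G$ of least Morley rank for which the statement fails, the inductive hypothesis applied to the action of $C_G(i)$ on the $\pm 1$-eigenspaces of a non-central involution $i$ forces every such centralizer to be a direct sum of general linear groups; one then shows $G/Z(G)$ is simple, verifies the hypotheses of Theorem~\ref{th:strongGIT} for $p=2$ (the centralizer and $K$-group conditions there are exactly what the induction has just supplied), concludes that $G$ is a quasisimple Chevalley group of Lie rank $\geqslant 3$, and reaches a contradiction because no such group has all involution centralizers of that shape.

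Your plan instead tries to assemble the $A_{n-1}$ amalgam directly and then invoke an identification theorem, and two concrete problems arise. First, the step ``irreducibility prevents any $V_i$ from being $G$-invariant, so for every $i\neq j$ some element of $G$ carries $V_i$ onto $V_j$'' is a non sequitur: irreducibility only gives some $g\in G$ and $v\in V_i$ with $gv\notin V_i$, not a $g$ with $gV_i=V_j$; obtaining genuine Weyl elements in $N_G(T)$ permuting the weight spaces transitively requires a separate argument, and this is precisely where the paper's inductive control on centralizers of involutions does the work you are missing. Second, you are invoking Theorem~\ref{th:strongGIT} as though it were Lyons's Curtis--Tits amalgam result (Theorem~\ref{Lyons}): Theorem~\ref{th:strongGIT} takes as input a \emph{simple} group satisfying $K$-group and centralizer conditions relative to a maximal $p$-torus and outputs that it is a Chevalley group of \emph{some} type, whereas an $A_{n-1}$ amalgam of root $\operatorname{SL}_2$'s is the input to Theorem~\ref{Lyons}. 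To use Theorem~\ref{th:strongGIT} the way the paper does, you would still have to prove $G/Z(G)$ simple and verify the hypotheses on proper subgroups containing the $2$-torus, and you have no mechanism for this without the minimal-counterexample induction. Likewise, your route to the root $\operatorname{SL}_2$'s via Fact~\ref{deloro} presupposes a non-solvable connected subgroup acting minimally on $V_i\oplus V_j$, and you have not said where it comes from; in the paper's approach those $\operatorname{GL}_2$'s fall out of the centralizer description for free.
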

\noindent
{\em Sketch of proof.}
Let $G$ be a counterexample  to the statement, of minimal Morley rank.
One can prove that the centralizers of non-central involutions in $G$ are direct sums of general linear groups, and also that $G/Z(G)$ is simple. After checking all the conditions of {Theorem \ref{th:strongGIT}}, one can conclude that $G$ is a quasisimple Chevalley group of Lie rank at least 3. However, in such groups centralizers of involutions are well-known, and in none of them,  all centralizers are direct sums of general linear groups. 	
This contradiction shows that there is no minimal counterexample to our theorem. \hfill $\Box$

\medskip

We obtained two more identifications of the general linear group as corollaries of the above theorem.

\begin{corollary} {\rm \cite[Corollary 1.3]{bbpseudo}} Let $G\ltimes K^n$ be a connected group of finite Morley rank, in which $G$ is definable and acts on $K^n$ preserving only the additive group structure {\rm (}but not necessarily the  $K$-vector space structure{\rm )}. If\/ $\operatorname{GL}_n(K)\leqslant G$, then $G=\operatorname{GL}_n(K)$.
\end{corollary}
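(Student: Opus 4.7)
The plan is to apply Theorem \ref{fact-pseudo} directly to the action $G \curvearrowright V := K^n$ and then pin down $G$ by a rank comparison. The idea is that all of the structural conditions required by Theorem \ref{fact-pseudo} are already visible in the subgroup $\operatorname{GL}_n(K) \leqslant G$.

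First I would verify the hypotheses. Faithfulness of $G \curvearrowright V$ is part of the semidirect product structure. For irreducibility, any connected definable proper $G$-invariant subgroup $W \leqslant V$ is in particular $\operatorname{GL}_n(K)$-invariant; since $\operatorname{GL}_n(K)$ already acts transitively on $V \smallsetminus \{0\}$, necessarily $W = 0$. To produce the pseudoreflection data, I take the $n$ diagonal factors $R_1,\dots,R_n$ of the standard maximal torus of $\operatorname{GL}_n(K) \leqslant G$. Each $R_i \cong K^*$ is a connected definable abelian subgroup with $[V,R_i] = Ke_i$, $C_V(R_i) = \bigoplus_{j \ne i} Ke_j$, and $R_i$ acting on $Ke_i$ transitively on nonzero points; so every $R_i$ is a pseudoreflection subgroup with $\rk[V,R_i] = 1$, and they commute pairwise. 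This gives $\psrk(G) \geqslant n = \rk(V)$, with the reverse inequality coming from the fact that pairwise commuting pseudoreflection subgroups produce independent positive-rank subgroups of $V$ through the decompositions $V = [V,R_i] \oplus C_V(R_i)$.

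With the hypotheses in place, Theorem \ref{fact-pseudo} yields an equivalence of $G \curvearrowright V$ with the natural action $\operatorname{GL}_n(K') \curvearrowright (K')^n$ for some \acf $K'$. Transferring ranks, $\rk(G) = \rk(\operatorname{GL}_n(K')) = n^2$. But $\operatorname{GL}_n(K)$ is itself a connected definable subgroup of $G$ with $\rk(\operatorname{GL}_n(K)) = n^2 = \rk(G)$, so connectedness of $G$ forces $G = \operatorname{GL}_n(K)$.

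The only place where care is needed is the upper bound $\psrk(G) \leqslant \rk(V)$: given commuting pseudoreflection subgroups $R_1, R_2$, one has to observe that $R_2$ normalises the decomposition $V = [V,R_1] \oplus C_V(R_1)$, which by transitivity of the $R_i$-action on $[V,R_i] \smallsetminus \{0\}$ forces the $[V,R_i]$ to be arranged so that their rank contributions to $V$ add up at most to $\rk(V)$. This is a routine dimension-counting argument and should not pose a serious obstacle, especially since the diagonal torus of $\operatorname{GL}_n(K)$ already realises the maximum. Everything else is an immediate application of the previously established identification theorem.
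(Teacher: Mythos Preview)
Your proposal is correct and follows exactly the route the survey indicates: the corollary is stated without proof as a direct consequence of Theorem~\ref{fact-pseudo}, and you supply precisely that derivation by exhibiting the diagonal torus of $\operatorname{GL}_n(K)\leqslant G$ as $n$ commuting rank-one pseudoreflection subgroups and then closing with the rank comparison $\rk G = n^2 = \rk \operatorname{GL}_n(K)$ together with connectedness of $G$. Your flagged concern about the upper bound $\psrk(G)\leqslant \rk(V)$ is a standard preliminary fact in \cite{bbpseudo} and not a genuine obstacle.
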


Below, ${\rm Pr_2}(G)$ stands for the Pr\"{u}fer 2-rank of $G$.

\begin{corollary} {\rm \cite[Theorem 1.4]{bbpseudo}} \label{corollary-prufer} Let $G$ be a connected group acting on a connected abelian
	group $V$ faithfully and irreducibly.  If\/ ${\rm Pr_2}(G) = \rk(V)$,  then $G\curvearrowright V$ is equivalent to
	${\operatorname{GL}}_n({K})\curvearrowright K^n$ for some \acf $K$, where $n=\rk(V)$.
\end{corollary}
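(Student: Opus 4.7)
The strategy is to reduce to Theorem~\ref{fact-pseudo} by producing, from the Prüfer $2$-rank assumption, exactly $n=\rk(V)$ pairwise commuting pseudoreflection subgroups, each with $\rk[V,R]=1$, and then invoking that theorem.

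First I would fix a maximal $2$-torus $T$ of $G$, which by hypothesis has Prüfer rank $n$, and pick $n$ independent involutions $i_1,\ldots,i_n$ in the socle of $T$. In the relevant case (where $V$ is not an elementary abelian $2$-group) each involution splits $V=C_V(i_k)\oplus[V,i_k]$ with $[V,i_k]$ of positive rank; since the $i_k$ commute, these splittings are simultaneous and yield $V=\bigoplus_{\epsilon\in\{\pm 1\}^n}V_\epsilon$, a decomposition that is respected by the connected centralizer $H=C^\circ_G(T)$. One would use standard structural results on connected groups of finite Morley rank to ensure $H$ is abelian, so that the $V_\epsilon$ are genuine $H$-weight spaces.

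The main obstacle is to show that exactly $n$ of the components $V_\epsilon$ are nonzero, each of rank $1$. Since $\sum_\epsilon\rk(V_\epsilon)=n$, the task is to rule out a component of rank $\geq 2$. For this I would exploit the $G$-irreducibility: the normalizer $N_G(T)$ permutes the $V_\epsilon$ through its quotient $W=N_G(T)/C_G(T)$, and irreducibility forces $W$ to act transitively on the nonzero ones, so they share a common rank $r$ with $r\cdot(\text{number of nonzero }V_\epsilon)=n$. The value $r=1$ is then forced by the choice of a full set of $n$ independent involutions in $T$: any $r\geq 2$ would leave strictly fewer than $n$ joint eigencomponents and clash with the independence of the $i_k$'s (which would have to generate distinct eigencharacters on distinct components).

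Once the fine decomposition is in hand, for each nonzero $V_\epsilon$ of rank $1$ I would define $R_\epsilon$ to be the connected component of the pointwise stabilizer in $H$ of $\bigoplus_{\epsilon'\neq\epsilon}V_{\epsilon'}$. Then $R_\epsilon$ is a connected definable abelian subgroup acting trivially off $V_\epsilon$ and non-trivially on the rank-$1$ space $V_\epsilon$; Zilber's field theorem (Fact~\ref{zilberaction}), applied to the induced faithful action of $R_\epsilon/C_{R_\epsilon}(V_\epsilon)$ on $V_\epsilon$, together with connectedness and maximality of $T$ to exclude a proper definable subgroup of $K^*$, then yields that $R_\epsilon$ acts transitively on $V_\epsilon\setminus\{0\}$. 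Hence each $R_\epsilon$ is a pseudoreflection subgroup with $\rk[V,R_\epsilon]=1$, and pairwise commutation is automatic inside the abelian group $H$. Therefore $\psrk(G)\geq n=\rk(V)$, and Theorem~\ref{fact-pseudo} applies to conclude that $G\curvearrowright V$ is equivalent to $\operatorname{GL}_n(K)\curvearrowright K^n$ for some algebraically closed field $K$.
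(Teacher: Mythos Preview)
The paper itself does not give a proof of this corollary: it is simply recorded as a consequence of Theorem~\ref{fact-pseudo} with a pointer to \cite[Theorem~1.4]{bbpseudo}, and later \emph{used} (in the sketch of Theorem~\ref{sharp}) rather than argued. Your overall strategy --- decompose $V$ under a maximal $2$-torus of Pr\"ufer rank $n$ into rank-$1$ eigenspaces, build $n$ commuting pseudoreflection subgroups, and invoke Theorem~\ref{fact-pseudo} --- is exactly the intended reduction.

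Two steps in your sketch are genuine gaps, however. First, your appeal to $G$-irreducibility for transitivity of $W=N_G(T)/C_G(T)$ on the nonzero $V_\epsilon$ is unjustified: irreducibility of $V$ as a $G$-module does not translate into Weyl-group transitivity on weight spaces without the algebraic-group machinery you are trying to establish. This step can be bypassed by a direct count: faithfulness of $\langle i_1,\dots,i_n\rangle$ on $V$ forces the characters $\epsilon$ with $V_\epsilon\neq 0$ to span all of $\mathop{{\rm Hom}}(\mathbb{Z}_2^n,\mathbb{Z}_2)$, so there are at least $n$ such $\epsilon$; since $\sum_\epsilon \rk(V_\epsilon)=n$, there are exactly $n$, each of rank~$1$, with no appeal to $W$ at all. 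Second, and more seriously, you assert without proof that $H=C_G^\circ(T)$ is abelian (this is not a ``standard structural result'' in the generality you need), and your argument that each $R_\epsilon$ is transitive on $V_\epsilon\smallsetminus\{0\}$ is a handwave: Fact~\ref{zilberaction} only embeds $R_\epsilon/C_{R_\epsilon}(V_\epsilon)$ into some $K^*$, and ``maximality of $T$'' is not by itself an argument that the image is all of $K^*$. Manufacturing honest pseudoreflection subgroups out of the $2$-torus data is precisely the nontrivial content that \cite{bbpseudo} supplies; you have located the right target but not yet hit it.
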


\subsection{Generically Sharply Multiply Transitive Actions}

As a first approximation to our goal, we studied groups with  a generically {\em sharply} multiply transitive action in \cite{bbsharp}. In this subsection, we will work under the hypothesis below. Note that since groups behave very differently over odd and even characteristics, we exclude the even characteristic case here by requiring that $V$ has no involutions.

\begin{hypothesis}
	$G$ is a connected group of \fmrd, $V$ is a connected abelian group of \fmr with no involutions, $G$ acts on $V$ definably, faithfully and generically sharply $n$-transitively, where
	$n\geqslant \rk(V)$.
\end{hypothesis}

A crucial observation is that $G$ has subgroups isomorphic to the hyperoctahedral group $\symm_n\ltimes\mathbb Z_2^n$, {Lemma \ref{lm:existence-of-hyperoctahedral}}. {The group} $\symm_n\ltimes\mathbb Z_2^n$ {is quite prominent in the theory of simple Lie groups and simple algebraic groups, it} is {the} Coxeter group of type $BC_n$, and it is the Weyl group of {orthogonal $\mathop{\rm SO}_{2n+1}$ and symplectic $\mathop{\rm Sp}_{2n}$ groups}.

\medskip
Now we are ready to prove the result for generic sharp multiple transitivity.

\begin{our} {\rm \cite{bbsharp}}	\label{sharp}
	Suppose that $G$ is a connected group of \fmrd, $V$ is a connected abelian group of \fmr with no involutions, $G$ acts on $V$ definably, faithfully and generically sharply $n$-transitively, where
	$n\geqslant \rk(V)$.
	Then $n=\rk(V)$, and $G\curvearrowright V$ is equivalent to
	${\operatorname{GL}}_n({K})\curvearrowright K^n$ for some \acf $K$ of characteristic not\/ $2$.
\end{our}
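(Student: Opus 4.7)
The plan is to feed the hyperoctahedral subgroup supplied by Lemma \ref{lm:existence-of-hyperoctahedral} into the pseudoreflection-based identification of Theorem \ref{fact-pseudo}. At the outset I would verify that $G$ acts irreducibly on $V$; a proper nontrivial $G$-invariant connected definable subgroup would constrain the generic stabiliser ranks in a way incompatible with sharp $n$-transitivity combined with the eigenspace decomposition constructed below, so either the action is already irreducible or one may pass to a minimal $G$-invariant connected definable subgroup and work there.

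Because the action is generically sharply $n$-transitive and $V$ has no involutions, Lemma \ref{lm:existence-of-hyperoctahedral} produces a subgroup $H\leqslant G$ isomorphic to $\symm_n\ltimes\mathbb{Z}_2^n$. Let $\tau_1,\ldots,\tau_n$ be the $n$ pairwise commuting involutions generating its $\mathbb{Z}_2^n$-factor. Since $V$ is $2$-divisible and $2$-torsion-free, each $\tau_i$ decomposes $V=C_V(\tau_i)\oplus[V,\tau_i]$, and the family $\{\tau_i\}$ produces a joint eigenspace decomposition $V=\bigoplus_{\epsilon\in\{\pm1\}^n}V_\epsilon$. The $\symm_n$-factor of $H$ permutes $\tau_1,\ldots,\tau_n$ transitively, hence permutes these eigenspaces within each weight class, forcing equal Morley rank across each weight class. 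Combining this with faithfulness of the action (which rules out $V=\bigcap_i C_V(\tau_i)$) and the hypothesis $n\geqslant\rk(V)$ one concludes that $V=[V,\tau_1]\oplus\cdots\oplus[V,\tau_n]$, each summand has rank exactly $1$, and therefore $n=\rk(V)$.

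Next I would build, for each $i$, a connected definable abelian subgroup $R_i\leqslant\bigcap_{j\ne i}C_G(\tau_j)$ containing $\tau_i$ that acts trivially on $\bigoplus_{j\ne i}[V,\tau_j]$ and transitively on $[V,\tau_i]\setminus\{0\}$. Since $\rk[V,\tau_i]=1$, Zilber's Fact \ref{zilberaction} identifies any connected abelian subgroup acting faithfully on $[V,\tau_i]$ with a subgroup of $K_i^\ast$ acting on $K_i^+$ for some \acf $K_i$; the essential content is that sharp transitivity of the pointwise stabiliser of $n-1$ generic coordinates on the remaining coordinate selects the full $K_i^\ast$. The resulting $R_i$ are pairwise commuting rank-$1$ pseudoreflection subgroups, giving $\psrk(G)\geqslant n=\rk(V)$. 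Theorem \ref{fact-pseudo} (or equivalently Corollary \ref{corollary-prufer}, applied to the $2$-torus assembled from the $R_i$) then yields that $G\curvearrowright V$ is equivalent to $\operatorname{GL}_n(K)\curvearrowright K^n$ for some \acf $K$, and the absence of involutions in $V$ forces $\operatorname{char}(K)\ne 2$.

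The main obstacle is the middle step: upgrading abstract commuting involutions $\tau_i$ to honest rank-$1$ pseudoreflection subgroups $R_i$. Fact \ref{zilberaction} readily supplies a subgroup of $K_i^\ast$, but showing that it is actually the whole of $K_i^\ast$ — rather than a proper subtorus — and that $\tau_i$ lives inside its connected component, requires exploiting sharp transitivity through stabilisers of generic $(n-1)$-tuples to manufacture transitivity on the last coordinate of $V^n$. Translating between the combinatorics of the hyperoctahedral group and the algebraic structure supplied by Zilber's theorem is where the real work is concentrated.
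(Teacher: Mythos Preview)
Your overall architecture matches the paper's: extract the hyperoctahedral subgroup from Lemma~\ref{lm:existence-of-hyperoctahedral}, use its involutions $e_1,\dots,e_n$ to split $V=\bigoplus U_i$ into rank-$1$ summands (whence $n=\rk V$), build enough torus to force $\mathrm{Pr}_2(G)=n$, and finish with Corollary~\ref{corollary-prufer}. The difference lies in the middle step, and it is exactly the step you flag as the obstacle.

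The paper does \emph{not} attempt to manufacture the pseudoreflection groups $R_i$ directly from Zilber's Fact~\ref{zilberaction} and sharp transitivity. Instead it argues by induction on $n$: inside $C_G(e_1)$ one finds a subgroup acting generically sharply $(n-1)$-transitively on $\bigoplus_{i\geqslant 2}U_i$, so by the inductive hypothesis that subgroup is $\operatorname{GL}_{n-1}(K)$ in its natural action; its diagonal torus already gives $(K^\ast)^{n-1}$, and repeating the argument with $C_G(e_n)$ upgrades this to a full $(K^\ast)^n$. The Pr\"ufer $2$-rank is then visibly $n$ and Corollary~\ref{corollary-prufer} applies. This completely bypasses the problem you isolate --- proving that the connected abelian group acting on a single $U_i$ is \emph{all} of $K^\ast$ rather than a proper subtorus --- because the torus is inherited wholesale from an honest $\operatorname{GL}_{n-1}$ furnished by induction.

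Your direct route is not obviously wrong, but the argument you sketch for transitivity on $U_i\smallsetminus\{0\}$ is incomplete: the pointwise stabiliser of $n-1$ generic elements is sharply transitive on a generic subset of $V$, not on the rank-$1$ line $U_i$, and it need not be abelian (in $\operatorname{GL}_n$ it is $K^{n-1}\rtimes K^\ast$), so extracting a rank-$1$ pseudoreflection subgroup from it still requires real work. The inductive approach buys you this for free.
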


\noindent
{\em Sketch of the proof.} 	We will use induction on $n\geqslant 1$.	By {Lemma \ref{lm:existence-of-hyperoctahedral}},	we know that
$G$ contains copies of the hyperoctahedral group $\symm_n\ltimes\mathbb Z_2^n$.
With the above notation, set $U_i=[V,e_i]$;  then {$V=\bigoplus_{i=1}^n U_i$} and hence $\rk(U_i)=1$, $\rk(V)=n$ easily {follows}.   It is possible to show that
there exists a subgroup of $C_G(e_1)$ which acts generically sharply\ ${(n-1)}$-transitively on {$\bigoplus_{i=2}^n U_i$}. Hence, 	by induction, we have $\operatorname{GL}_{n-1}(K)$ in $G$, where $K$ is an \acf of characteristic not 2.    Now, repeating this with $C_G(e_n)$, one can
obtain a torus $(K^*)^n$  in $G$. 	Therefore,  $\Pr_2(G)=n=\rk(V)$, and thus we can apply   Corollary~\ref{corollary-prufer} and obtain the desired result. \hfill $\Box$

\subsection{The Sharpness of the Action}

In this subsection, we will prove that the action of $G$ on $V$ is generically {\em sharply} $n$-transitive under the following assumptions. For details, see \cite{bbnotsharp}.

\begin{hypothesis}
	$G$ is a connected group, $V$ is a connected elementary abelian $p$-group, where $p$ is an odd prime, $G$ acts on $V$ definably, faithfully and generically $n$-transitively, and
	$n\geqslant \rk(V)$.
\end{hypothesis}

By Lemma \ref{hyperoctahedral},  $G$ contains the hyperoctahedral group $\symm_n\ltimes\mathbb Z_2^n$ as a definable section $H/K \simeq \symm_n\ltimes\mathbb Z_2^n$ for some definable subgroups $1\leqslant K \lhd H \leqslant G$.

Perhaps the most technical result of this project is showing the triviality of $K$. In the proof, we used the following version of Maschke's Theorem, which is based on a result of Borovik \cite[Theorem 7]{avb}.

\begin{theorem} {\rm \cite[Theorem 2]{bbnotsharp}}  Let $V$ be a connected elementary abelian $p$-group of finite Morley rank, $X$ a finite $p'$-group acting on $V$ definably, and $R$ the enveloping algebra over $\mathbb F_p$ for the action of $X$ on $V$. Assume that  $A_1,A_2, \dots, A_m$ is the complete list of non-trivial simple  submodules for $R$ in $V$, up to isomorphism.
	Then	$\rk V \geqslant m$.
\end{theorem}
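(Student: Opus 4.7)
The plan is to decompose $V$ as a direct sum of definable connected subgroups $V_1 \oplus \cdots \oplus V_m$, one for each isomorphism class of simple $R$-submodule appearing in $V$, and then invoke additivity of Morley rank on direct products.

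Since $X$ is a finite $p'$-group, classical Maschke's theorem gives that $\mathbb{F}_p[X]$ is a semisimple $\mathbb{F}_p$-algebra, and hence so is its quotient $R$, the enveloping algebra of the action. By Artin--Wedderburn, $R = R_1 \times \cdots \times R_m$ is a product of simple algebras, where $R_i$ corresponds to the simple $R$-module $A_i$ appearing in $V$. Let $e_1, \ldots, e_m \in R$ be the corresponding primitive central idempotents, so that $1 = e_1 + \cdots + e_m$ and $e_i e_j = \delta_{ij} e_i$. Each $e_i$ lifts to a specific $\mathbb{F}_p$-linear combination $\sum_{x \in X} c_{i,x}\, x$ of elements of $X$, and therefore acts on $V$ as a definable endomorphism via the map $v \mapsto \sum_{x} c_{i,x}(x \cdot v)$, every translate being definable by hypothesis. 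Consequently $V_i := e_i V$ is a definable subgroup of $V$, and as the image of the connected group $V$ under a definable homomorphism, it is itself connected. The idempotent relations yield $V = V_1 \oplus \cdots \oplus V_m$ as a direct sum of definable subgroups, whence $\rk(V) = \sum_{i=1}^m \rk(V_i)$ by additivity of rank on direct products.

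It then remains to show $\rk(V_i) \geqslant 1$ for each $i$. The simple submodule $A_i \subseteq V$ is fixed by $e_i$ and killed by every $e_j$ with $j \neq i$ (both facts hold on every simple submodule of the corresponding isotypic class), so $A_i \subseteq V_i$ and in particular $V_i$ is non-trivial. A non-trivial connected group of finite Morley rank has Morley rank at least $1$, so $\rk(V_i) \geqslant 1$ and summing yields $\rk(V) \geqslant m$, as desired.

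The main obstacle in this plan is ensuring that the abstract module-theoretic decomposition takes place inside the definable universe: one must verify that the central idempotents of $R$, which \emph{a priori} live only in $R \subseteq \mathrm{End}(V)$, act on $V$ by definable endomorphisms, so that the isotypic components $V_i$ are themselves definable subgroups to which the Morley rank applies. The finiteness and definability of the action of $X$ make this step transparent here, but in the broader \fmr setting, where one wants to control the interaction between the action of $X$ and the ambient rank structure on $V$, the decomposition is routed through a Maschke-type statement in the \fmr category, namely Borovik's Theorem 7 of \cite{avb}.
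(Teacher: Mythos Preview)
The paper does not actually prove this statement; it merely records it as a cited result from \cite{bbnotsharp} and notes that it rests on Borovik's Theorem~7 in \cite{avb}. Your direct argument is correct and essentially reconstructs that Maschke-type decomposition inside the definable universe by hand: the key observation --- that each central idempotent $e_i\in R$ is the image of some element of $\mathbb{F}_p[X]$ and hence acts on $V$ as a finite $\mathbb{F}_p$-linear combination of the definable operators $x\in X$, with scalar multiplication by $c\in\mathbb{F}_p$ being $c$-fold addition on an elementary abelian $p$-group --- is exactly what makes the isotypic components $V_i=e_iV$ definable and connected. From there, additivity of rank and the non-triviality of each $V_i$ (as it contains~$A_i$) give $\rk V\geqslant m$.

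One small point worth making explicit in your write-up: you should remark that, because $R$ is the \emph{enveloping} algebra (i.e.\ the image of $\mathbb{F}_p[X]$ in $\operatorname{End}(V)$), it acts faithfully on $V$, and hence every simple $R$-module already occurs as a submodule of $V$; this is why the number $m$ of isomorphism types of simple submodules in $V$ coincides with the number of Wedderburn factors of $R$, so that the idempotents $e_1,\dots,e_m$ you construct really are indexed by $1,\dots,m$. Otherwise the proof is complete and, as you yourself note in the final paragraph, somewhat more elementary than invoking the general \fmr\ Maschke machinery of \cite{avb}, which is designed for broader situations where the definability of the idempotents is less transparent.
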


Groups of \fmr have good Sylow type theorems, for Sylow 2-subgroups and also for the so-called  \emph{maximal good tori} (see {\cite[Sections I.4.4, I.11.3, and IV.1.2]{abc})}, \ and this allowed us to use some Frattini type arguments for the  careful study of the structure of  $K$, first showing that $K$ was finite. After that, some delicate finite group theory arguments (we cannot give details here) were essential to reach the following conclusion.

\begin{proposition} If $V$ is an elementary abelian $p$-group, for odd prime $p$, then {within this setup  $K=1$}; that is, $G$ acts generically sharply $n$-transitively on $V$.
\end{proposition}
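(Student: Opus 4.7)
The plan is the two-phase strategy announced in the excerpt: first show $K^\circ=1$ (so $K$ is finite), and then use finite group theory to kill any residual $K$. Both phases exploit the hyperoctahedral section $H/K\simeq\symm_n\ltimes\mathbb Z_2^n$ together with the Maschke-type theorem \cite[Theorem 2]{bbnotsharp}, Zilber's rank-$1$ field theorem (Fact \ref{zilberaction}), and Sylow-type uniqueness for $2$-subgroups and maximal good tori in groups of \fmrd.

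For the first phase, note that $H/K$ finite forces $H^\circ\leqslant K$, so $K^\circ=H^\circ$ is a connected definable normal subgroup of $H$ fixing each $a_i$. To realise an honest hyperoctahedral-type subgroup of $G$ acting on $V$, I would run a Frattini argument: if $T$ is a Sylow $2$-subgroup or a maximal good torus of $K^\circ$, Sylow uniqueness in the \fmr setting gives $H=K^\circ\cdot N_H(T)$, so representatives of the cosets of $K$ in $H$ can be chosen inside $N_H(T)$. Extracting a finite $p'$-subgroup $X\leqslant N_H(T)$ whose action on $V$ realises the hyperoctahedral permutation pattern on $\{\pm a_1,\ldots,\pm a_n\}$, one obtains honest commuting involutions $\tilde e_1,\ldots,\tilde e_n\in G$ satisfying $\tilde e_i\cdot a_j=(-1)^{\delta_{ij}}a_j$. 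Applying the Maschke theorem to $\langle\tilde e_1,\ldots,\tilde e_n\rangle\simeq\mathbb Z_2^n$ acting on the connected elementary abelian $p$-group $V$, and using that the $n$ characters realised by the $a_i$ are pairwise distinct, one forces the decomposition $V=\bigoplus_{i=1}^n U_i$ with $\rk U_i=1$ and $a_i\in U_i$. The connected normal subgroup $K^\circ$ is $X$-invariant, permutes the finite set $\{U_i\}$, and being connected stabilises each of them; on the rank-$1$ group $U_i$ it acts, by Fact \ref{zilberaction}, as a subgroup of the multiplicative group of an \acfd, and since it fixes the nonzero element $a_i$ this subgroup is trivial. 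Thus $K^\circ$ acts trivially on $V$, and faithfulness yields $K^\circ=1$.

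For the second phase, the now finite $K$ preserves $V=\bigoplus U_i$, is normalised by $X$, and fixes each $a_i$. Any $p$-element of $K$ would act unipotently on $V$ with fixed subspace of rank strictly less than $n$, contradicting that the $a_i$ generate $V$ up to finite index; hence $K$ is a finite $p'$-group. Such a $p'$-element then acts linearly on each $U_i$ fixing $a_i\neq 0$, so by identifying the action via the ambient field structure on $U_i$ and exploiting that $K$ is normalised by $X$ (which prevents exotic "diagonal" symmetries arising from the various $K_i$), one concludes $K=1$.

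The main obstacle, as the paper openly flags, is the initial Frattini/lifting step: the extension $1\to K\to H\to\symm_n\ltimes\mathbb Z_2^n\to 1$ is not known to split, so upgrading the formal involutions $e_i\in H$ to honest commuting involutions of $G$ acting on $V$ requires the Sylow and good-torus uniqueness theorems together with nontrivial bookkeeping --- this is where the "delicate finite group theory arguments" mentioned in the excerpt must live. Once the lift is secured, the Maschke theorem plus the Zilber rank-$1$ theorem reduce Phase 1 to a short computation, and the finite-group arguments of Phase 2 become tractable.
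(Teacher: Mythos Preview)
Your two-phase strategy and the toolkit you invoke---the Maschke-type theorem, Sylow and good-tori conjugacy with a Frattini argument, and Zilber's rank-$1$ theorem---match the outline the paper gives, and you correctly flag the lifting of the hyperoctahedral section to honest commuting involutions as the crux. Since the survey itself offers no proof beyond that outline (``we cannot give details here''), your sketch is in the same spirit.

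Two steps, however, are not merely details to be filled in but genuine gaps as written. In Phase~1 you assert that $K^\circ$ ``permutes the finite set $\{U_i\}$'' and hence, being connected, stabilises each $U_i$. But the $U_i$ are eigenspaces for your lifted group $X$, and while $X$ normalises $K^\circ$, normality points the wrong way for this conclusion: Clifford theory tells you that $X$ permutes $K^\circ$-isotypic components, not that $K^\circ$ respects $X$-eigenspaces. There is no a~priori reason an element of $K^\circ$ should send one $X$-eigenspace into another, so the reduction to Fact~\ref{zilberaction} on each $U_i$ does not go through as stated. (Note also that Fact~\ref{zilberaction} as quoted requires the acting group to be abelian; you would need a stronger version or an intermediate step.) In Phase~2, your exclusion of $p$-elements via ``the $a_i$ generate $V$ up to finite index'' is simply false: $V$ is infinite while $\langle a_1,\ldots,a_n\rangle$ is a finite $p$-group. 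A $p$-element of $K$ fixing every $a_i$ can still act nontrivially on $V$; ruling this out again requires $K$-invariance of the $U_i$ (so that one can invoke the absence of $p$-torsion in the multiplicative group of a characteristic-$p$ field), which loops back to the first gap.

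So your plan aligns with the paper's, but the assertion that $K^\circ$ (and later the finite $K$) stabilises each $U_i$ is precisely where the hard work hides; the ``delicate finite group theory arguments'' the paper alludes to almost certainly live there as well as in the splitting of the extension.
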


Therefore, we are able to remove the sharpness assumption from Theorem~\ref{sharp}.

\begin{our} {\rm \cite{bbnotsharp}}
	Suppose that $G$ is a connected group of \fmrd, $V$ is a connected elementary  abelian $p$-group of \fmrd, where $p$ is odd, $G$ acts on $V$ definably, faithfully and generically $n$-transitively, where
	$n\geqslant \rk(V)$.
	Then $n=\rk(V)$, and $G\curvearrowright V$ is equivalent to
	${\operatorname{GL}}_n({K})\curvearrowright K^n$ for some \acf $K$ of characteristic $p$. \label{notsharp}
\end{our}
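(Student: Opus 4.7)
\emph{Proof proposal.} The plan is to reduce this theorem to the already-established sharp case, Theorem \ref{sharp}. The only new content relative to that earlier theorem is the absence of the word ``sharply'' in the hypothesis, so my strategy is to upgrade generic $n$-transitivity to generic \emph{sharp} $n$-transitivity and then quote the previous result. To this end, first apply Lemma \ref{hyperoctahedral} to the action $G \curvearrowright V$: since $V$ is an elementary abelian $p$-group with $p$ odd, $V$ contains no involutions, so the lemma furnishes definable subgroups $1 \leqslant K \lhd H \leqslant G$ with $H/K \simeq \symm_n \ltimes \mathbb{Z}_2^n$.

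Next, I would invoke the Proposition stated just above, which in precisely this setting (odd prime $p$, $n \geqslant \rk(V)$) asserts $K = 1$. This is exactly the statement that $G$ acts on $V$ generically sharply $n$-transitively, so the sharpness upgrade is complete.

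At this point every hypothesis of Theorem \ref{sharp} is in force: $G$ and $V$ are connected of finite Morley rank, $V$ is a connected abelian group without involutions, and $G$ acts on $V$ definably, faithfully, and generically sharply $n$-transitively with $n \geqslant \rk(V)$. Applying Theorem \ref{sharp} therefore yields $n = \rk(V)$ and the equivalence of $G \curvearrowright V$ with $\operatorname{GL}_n(K) \curvearrowright K^n$ for some algebraically closed field $K$ of characteristic not $2$. Because $V$ is isomorphic as an abstract abelian group to $K^n$ and has exponent $p$, the field $K$ is forced to have characteristic $p$, completing the proof.

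The main obstacle is not visible in this outline: it is entirely absorbed into the Proposition. Proving $K = 1$ is described in the paper as the most technical single step of the project. The difficulty is that $K$ could \emph{a priori} be an infinite definable group controlling how copies of $\symm_n \ltimes \mathbb{Z}_2^n$ act on $V$, so one must first show $K$ is finite and then rule it out altogether. The expected ingredients are the Maschke-type theorem displayed above (to limit how $X$-invariant decompositions of $V$ can split under a $p'$-group action), Sylow-type theorems for $2$-subgroups and for maximal good tori, Frattini-style arguments to cut $K$ down to a finite subgroup, and a delicate piece of finite group theory to eliminate what remains. Once the Proposition is in hand, however, the present theorem follows immediately by the two-line reduction above.
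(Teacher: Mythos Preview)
Your proposal is correct and follows essentially the same route as the paper: invoke Lemma~\ref{hyperoctahedral}, appeal to the Proposition to force $K=1$ and hence generic sharp $n$-transitivity, then apply Theorem~\ref{sharp}, with the characteristic-$p$ conclusion read off from the exponent of $V$. You have also correctly located where the real work hides, namely in the Proposition establishing $K=1$.
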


\subsection{Characteristic 0} \label{sec:char-0-2}

In the previous section, even though some results are valid in a more general context, the final statement (Theorem \ref{notsharp}) is proved only when $V$ is an elementary abelian $p$-group, for odd $p$. When $V=K^n$ for some \acf $K$, obviously, $V$ may be an elementary abelian 2-group, or a divisible {torsion-free} abelian group. We will treat these two cases in this section  and the next, and show that there is no other case  in section \ref{sec:proof-thm-3}.

When $V$ is divisible abelian, we use the following result by Loveys and Wagner {(in a slightly simplified version)}.

\begin{fact} {\rm \cite{Loveys-Wagner1993}, \cite[Theorem A.20]{bn}}\label{loveyswagner}
	Let $G$ be an infinite group  {of \fmr} acting on an infinite divisible torsion-free abelian group $V$  {of \fmr definably}. If the action is faithful and irreducible, then there exists an \acf $F$ of characteristic 0 such that $V$ is a vector space over $F$, $G$ is definably isomorphic to a subgroup $H$ of $\operatorname{GL}(V)$, and the action  $G\curvearrowright V$ is equivalent to the action  $H\curvearrowright V$.
\end{fact}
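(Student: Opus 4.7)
The plan is to construct an algebraically closed field $F$ of characteristic $0$, interpretable from the given data, so that $V$ acquires a compatible $F$-vector space structure and $G$ embeds as $F$-linear automorphisms. The field will be extracted as the commutant of the $G$-action in an appropriate ring of definable endomorphisms of $V$, via a Schur-style argument; the \fmr environment then upgrades the resulting division ring to a commutative algebraically closed field.

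Concretely, I would consider the ring $R$ of definable endomorphisms of $V$ that commute with every element of $G$. For any nonzero $\phi \in R$, both $\ker\phi$ and the connected component of $\operatorname{im}\phi$ are definable $G$-invariant subgroups of $V$. Since $V$ is torsion-free, every finite subgroup is trivial, so irreducibility of the action forces $\ker\phi = 0$ and $\operatorname{im}\phi = V$. Hence every nonzero $\phi$ is a definable automorphism of $V$ whose inverse is again definable, and $R$ is a skew field. The delicate part is interpretability: one must organise a sufficiently large family of such endomorphisms into a uniformly parametrised definable family inside the ambient universe, rather than treating $R$ as a purely abstract object. Divisibility of $V$ plays an essential role here, as it allows one to reconstruct scalar multiplication by rationals canonically, providing the bridge from abstract commuting endomorphisms to definable ones.

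Once $R$ is realised as an interpretable skew field of \fmrd, Macintyre's theorem --- every infinite skew field of \fmr is a commutative algebraically closed field --- yields that $R$ is an algebraically closed field $F$. The characteristic must be $0$: since $V$ is torsion-free, multiplication by any prime $p$ is an injective endomorphism of $V$, so the corresponding element $p\cdot 1_F \in R$ is nonzero. Finally, $V$ becomes an $F$-vector space via the scalar action of $F\subseteq R$, and since every element of $F$ commutes with $G$ by construction, the action of $G$ is $F$-linear, giving a definable embedding $G\hookrightarrow \operatorname{GL}_F(V)$, as required. The main obstacle is precisely the interpretability step described above: passing from the abstract Schur division ring to a bona fide interpretable field is the technical heart of the Loveys-Wagner argument, and this is also the point at which torsion-freeness and divisibility are genuinely indispensable as opposed to the finite-characteristic case handled elsewhere in the project.
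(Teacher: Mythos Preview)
The paper does not prove this statement: it is recorded as a \emph{Fact}, attributed to Loveys--Wagner \cite{Loveys-Wagner1993} and \cite[Theorem~A.20]{bn}, and then invoked as a black box in the characteristic~$0$ branch (Proposition~\ref{char0}). There is therefore no proof in the paper against which to compare your sketch.

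That said, your outline is broadly faithful to how such linearisation results are obtained: the Schur-lemma step (irreducibility forces nonzero commuting endomorphisms to be automorphisms) and the appeal to Macintyre's theorem on infinite division rings of \fmr are the correct structural ingredients, and you rightly isolate interpretability of the commuting ring as the genuine difficulty. One point worth sharpening: merely noting that $\mathbb{Q}$ acts by definable scalars on a torsion-free divisible $V$ does not by itself hand you an \emph{interpretable} infinite field, since $\mathbb{Q}$ as a field is not of \fmrd. The real work is to manufacture a single uniformly definable family of commuting endomorphisms large enough to carry a field structure inside the ambient universe; this is where the Loveys--Wagner argument does something specific rather than generic Schur reasoning. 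You acknowledge this gap explicitly and defer to the cited source, which is honest, but it does mean your proposal is a roadmap rather than a proof.
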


Now we are ready to sketch the proof of the following.

\begin{proposition} {\rm \cite[Lemma 4.2]{bbsolvable}}
	Suppose that $G$ is a connected group of \fmrd, $V$ is a connected {torsion-free} divisible abelian, $G$ acts on $V$ definably, faithfully and generically $n$-transitively, where
	$n\geqslant \rk(V)$.
	Then $n=\rk(V)$, and $G\curvearrowright V$ is equivalent to
	${\operatorname{GL}}_n({K})\curvearrowright K^n$ for some \acf $K$ of characteristic $0$. \label{char0}
\end{proposition}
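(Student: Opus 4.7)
The plan is induction on $\rk(V)$, with the base case $\rk(V)=1$ given by Fact~\ref{zilberaction}. I first reduce to the case where $V$ is $G$-irreducible. Suppose $W<V$ is a proper nontrivial definable $G$-invariant subgroup; replacing $W$ by its connected component (which is nontrivial, since $V$, being torsion-free, has no nontrivial finite subgroups), I may assume $W$ is connected. The quotient $V/W$ is again torsion-free and divisible (divisibility passes to quotients, and any $v\in V$ with $nv\in W$ lies in $W$ by divisibility of $W$ and torsion-freeness of $V$), so $G$ acts on $V/W$ definably. Factoring out the kernel of the induced action produces a faithful generically $n$-transitive action of a connected group on the torsion-free divisible abelian group $V/W$. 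Since $n \geq \rk(V) > \rk(V/W)$, the inductive hypothesis forces $n = \rk(V/W) < \rk(V) \leq n$, a contradiction. Hence $V$ is $G$-irreducible.

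With irreducibility established, Fact~\ref{loveyswagner} supplies an algebraically closed field $F$ of characteristic $0$, interpreted in the ambient definable universe, such that $V$ is an $m$-dimensional $F$-vector space with $m = \rk(V)$, and $G$ is definably isomorphic to a subgroup $H$ of $\operatorname{GL}(V) \cong \operatorname{GL}_m(F)$, with equivalent action. From here on I identify $G$ with $H$ and treat the action as a linear action.

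A rank count then closes the argument. By faithfulness and linearity of $H \curvearrowright V$, the pointwise stabiliser in $H$ of any $n$-tuple $(v_1,\dots,v_n)\in V^n$ whose entries span $V$ is trivial, and since $n\geq m$ this condition holds on a generic subset of $V^n$. Picking such a tuple inside the generic transitive orbit, its orbit has Morley rank $\rk(H)$, while generic $n$-transitivity forces the same orbit to have rank $\rk(V^n)=nm$, so $\rk(H)=nm$. On the other hand $H\leq\operatorname{GL}_m(F)$ has Morley rank at most $m^2$, whence $nm\leq m^2$, i.e.\ $n\leq m$; combined with $n\geq m$, this gives $n=m$ and $\rk(H)=m^2$. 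Since $H$ is a connected definable subgroup of the connected group $\operatorname{GL}_m(F)$ of the same Morley rank, $H=\operatorname{GL}_m(F)$, completing the proof.

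The main obstacle is the bookkeeping around Morley ranks inside the ambient definable universe: one must verify that $V/W$ really sits inside the universe as a torsion-free divisible abelian group of rank $\rk(V)-\rk(W)$ (so that the inductive hypothesis applies), and that the Morley rank of $\operatorname{GL}_m(F)$ computed in the ambient universe agrees with its pure-field value $m^2$ (needed for the bound $\rk(H)\leq m^2$ and for the final identification by rank). Once these routine verifications are in place, Loveys--Wagner performs the structural work, and the remainder is a short rank count that ultimately rests on the elementary fact that a faithful linear action on $F^m$ cannot be generically more than $m$-transitive.
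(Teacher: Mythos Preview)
Your proof is correct and follows essentially the same route as the paper's own sketch: induction on $\rk(V)$, reduction to the irreducible case via the quotient $V/W$, application of Loveys--Wagner (Fact~\ref{loveyswagner}), and then a rank count inside $\operatorname{GL}_m(F)$. You have simply spelled out in full the ``rank computations'' that the paper leaves implicit; the one cosmetic point is that Fact~\ref{zilberaction} as stated assumes $G$ abelian, so it does not literally cover your base case, but this is harmless since the Loveys--Wagner step and rank count already handle $\rk(V)=1$ uniformly.
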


\noindent
{\em Sketch of proof.}  Proof is by induction on $\rk(V)\geqslant 1$. If there exists a  non-trivial proper definable subgroup $U\leqslant V$ fixed by $G$, then the induction hypothesis can be applied to $G \curvearrowright V/U$, and hence $\rk U=0$ is obtained. Therefore the action is irreducible and Fact~\ref{loveyswagner} is applicable. The rest follows from rank computations.
\hfill $\Box$

\medskip

\subsection{Characteristic 2} \label{sec:char-2}

The characteristic $2$ case requires two important results: an expanded version of Borovik's Linearisation Theorem \cite[Theorem 4]{avb},
{and the} classification of simple groups of \fmr and of even type.  We copy both results verbatim from \cite{bbsolvable}.

\begin{our}
	{\rm \cite[Theorem 2]{bbsolvable}} \label{linearisation} Let $p$ be a prime, $V \rtimes G$ a group of finite Morley rank, where $V$ is a connected elementary abelian $p$-group of finite Morley rank, and $G$ is a connected group of finite Morley rank which acts on $V$ faithfully, definably, and irreducibly. Assume that $L \lhd G$ is a definable normal subgroup isomorphic to a simple algebraic group over an algebraically closed field $F$ of characteristic $p$.
	
	Then
	\begin{itemize}
		\item The abelian group $V$ has a structure of a finite dimensional   $F$-vector space, and  the action of $G$ on $V$ is $F$-linear.
		\item The group $G$ can be decomposed as a central product
		\[
		G = L \ast M_1\ast \cdots \ast M_k \ast T
		\]
		where $M_i$, $i = 1, \dots, k$ are definable and isomorphic to simple algebraic groups over $F$ {\rm (}possibly $k=0${\rm )}, and is a torus (possibly $T=1$).
		\item The enveloping ring $R=R_V(G)$ additively generated by $G$ in $\mathop{{\rm End}} V$ is definable and  equal to the algebra $\mathop{{\rm End}}_F V$.
	\end{itemize}
\end{our}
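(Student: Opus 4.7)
The plan is to exploit the normal subgroup $L$ to impose a definable $F$-vector space structure on $V$, and then bootstrap this to control the whole of $G$. Since $V$ is a connected elementary abelian $p$-group of finite Morley rank, it is already an $\mathbb{F}_p$-vector space, and the enveloping ring $R_V(L)$ generated additively in $\mathop{\rm End}_{\mathbb{F}_p}(V)$ by the action of $L$ is definable. The crux is to produce a definable copy of $F$ inside $R_V(L)$, so that $V$ becomes an $F$-module of finite Morley rank and hence a finite-dimensional $F$-vector space on which $L$ acts $F$-linearly by construction.

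To produce $F$ inside $R_V(L)$, I would fix a maximal torus $T_L \leqslant L$ and a root $\alpha$ with root subgroup $U_\alpha \cong (F,+)$. The conjugation action $t u t^{-1} = u^{\alpha(t)}$ of $T_L$ on $U_\alpha$ is parametrised by $F^* = \alpha(T_L)$, and feeding the configuration $T_L \ltimes U_\alpha \curvearrowright V$ into Zilber's field theorem (in the form applicable inside the finite Morley rank universe) yields a definable field embedded in $R_V(L)$ that is canonically isomorphic to $F$; identifying them, $V$ inherits its $F$-vector space structure.

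Next, I would argue that all of $G$ respects this structure. Since the copy of $F$ inside $\mathop{\rm End}(V)$ is canonically determined by $L$, conjugation by $g \in G$ induces an automorphism of $L$ which in turn acts on $F$; connectedness of $G$ together with the fact that the group of definable outer automorphisms of a connected simple algebraic group over an algebraically closed field is discrete (only graph automorphisms modulo inners) forces this induced action to be trivial, so $G \hookrightarrow \mathop{\rm GL}_F(V)$. Decomposing $V$ as an $L$-module, Jacobson density in the definable setting gives $V \cong W \otimes_F U$ for a simple $L$-module $W$ and an $F$-vector space $U$ parametrising multiplicities, and $C_G(L)$ acts faithfully and $F$-linearly on $U$. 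Standard structure theory of connected definable subgroups of $\mathop{\rm GL}_F(U)$, combined with the commutation relations forced by $G = L \cdot C_G(L)$, yields a central-product decomposition $C_G(L) = M_1 \ast \cdots \ast M_k \ast T$ with each $M_i$ a simple algebraic $F$-group and $T$ a central torus; adjoining $L$ gives the claimed decomposition of $G$. Finally, $G$-irreducibility of $V$ together with $F$-linearity gives $R_V(G) = \mathop{\rm End}_F V$ by Jacobson density.

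The main obstacle I foresee is the first step: constructing the definable copy of $F$ inside $R_V(L)$ in a way that is \emph{canonical} relative to $L$, so that the identification with the field parametrising $L$ is not merely an abstract isomorphism. Without canonicity, the later argument that $G$ fixes $F$ pointwise loses its footing. This is precisely what Borovik's original Linearisation Theorem \cite[Theorem 4]{avb} accomplishes by a delicate recognition-of-field argument from root subgroups and weight spaces, and the extended version cited here as \cite[Theorem 2]{bbsolvable} pushes this a step further to produce the outer $M_i$ and $T$ factors. A second, subtler point is the rigidity argument eliminating field and graph automorphisms of $L$ coming from $G$: it relies on a clean finite Morley rank counterpart of the fact that $\mathop{\rm Out}(L)$ is finite for simple algebraic groups, which must be extracted from the abstract definable framework rather than from the algebraic geometry we are ultimately trying to recover.
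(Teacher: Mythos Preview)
This paper contains no proof of the statement: it is introduced with the sentence ``We copy both results verbatim from \cite{bbsolvable}'' and is then used purely as a black box in the sketch of Proposition~\ref{char2}. There is therefore nothing here to compare your proposal against; the actual argument lives in \cite{avb} and its extension in \cite{bbsolvable}, as you yourself acknowledge in your final paragraph.

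As a roadmap your outline is plausible and correctly isolates the two genuinely delicate points (canonicity of the copy of $F$ recovered from root data, and rigidity of $L$ under $G$-conjugation). One caution: the step you label ``standard structure theory of connected definable subgroups of $\mathop{\rm GL}_F(U)$'' is not innocent. Embedding $C_G(L)$ abstractly into $\mathop{\rm GL}_F(U)$ does not by itself make $C_G(L)$ definable in the pure field $F$, so you cannot yet invoke algebraic-group structure theory off the shelf; closing that gap is part of what the theorem is asserting, and the arguments in \cite{avb,bbsolvable} address it with some care rather than by appeal to a prior classification.
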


{In the next result,} $O_2 (G)$ denotes the maximal
	normal 2-subgroup of $G$, {and $O_2^\circ(G) = (O_2 (G))^\circ$ is its connected component}.

\begin{fact}{\rm \cite[Proposition X.2,  {page 502}]{abc}} \label{centralproduct} Let $G$ be a connected group of finite Morley rank
	containing no nontrivial $2$-torus. Then $O_2(G) = O_2^\circ(G)$ and it is a definable
	unipotent subgroup of $G$. The quotient $G/O_2^\circ(G)$ has the form $Q\ast S$ with $S$
	a central product of quasisimple algebraic groups over algebraically closed
	fields of characteristic 2, and $Q$ a connected group without involutions.
\end{fact}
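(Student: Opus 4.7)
The plan is to derive this structural result from two largely independent ingredients: (i) a direct analysis of $O_2(G)$ via Sylow theory for groups of finite Morley rank, and (ii) the component-theoretic decomposition of the quotient $\bar{G}=G/O_2^\circ(G)$, feeding the resulting quasisimple sections into the classification of simple groups of even type quoted from \cite{abc}.

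For part (i), since $G$ contains no nontrivial 2-torus, every definable connected abelian 2-subgroup of $G$ is of bounded exponent, hence 2-unipotent. The standard Sylow theory for groups of finite Morley rank (see \cite[Sections I.4.4, I.11.3]{abc}) then forces each Sylow 2-subgroup to be nilpotent and of bounded exponent, so its connected component is a definable 2-unipotent subgroup. The intersection defining $O_2(G)$ inherits these properties, and the descending chain condition on definable subgroups makes $O_2(G)$ itself definable and 2-unipotent. Its connected component $O_2^\circ(G)$ is characteristic in $G$; the finite quotient $O_2(G)/O_2^\circ(G)$ must be centralized by the connected group $G$, and a short rigidity argument (using that any lift of an element of this finite quotient would normalize, hence together with $O_2^\circ(G)$ generate, a definable 2-group strictly larger than $O_2^\circ(G)$ in its connected component) yields $O_2(G)=O_2^\circ(G)$.

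For part (ii), pass to $\bar{G}=G/O_2^\circ(G)$, which by maximality of $O_2$ satisfies $O_2(\bar{G})=1$. Introduce the generalized Fitting subgroup $F^*(\bar{G})=F(\bar{G})E(\bar{G})$, where $E(\bar{G})$ is the product of the (subnormal, quasisimple, definable) components. Because $\bar{G}$ has no nontrivial normal 2-unipotent subgroup, each component that involves an involution is itself a simple group of even type, and the main theorem of \cite{abc} identifies it as a Chevalley group over an algebraically closed field of characteristic $2$; the central product of all such components is $S$. The complementary factor $Q$ is the definable hull of $C_{\bar{G}}(S)$ together with the non-even-type part of $F^*(\bar{G})$. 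One then checks $Q$ is involution-free: any involution in $Q$ would either generate a new simple component of even type (contradicting the construction of $S$), or lie in a definable connected abelian 2-subgroup, producing either a 2-torus (excluded by hypothesis, pulled back through the unipotent kernel) or a unipotent 2-subgroup normal in $\bar{G}$ (excluded since $O_2(\bar{G})=1$). Finally, a standard Zilber indecomposability and rank argument assembles the central product $\bar{G}=Q*S$.

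The main obstacle is unquestionably the appeal to the even-type classification at the heart of step (ii): identifying the quasisimple components of $\bar{G}$ as Chevalley groups over algebraically closed fields of characteristic $2$ is precisely the 500+ page theorem of \cite{abc}. Everything else, the handling of $O_2(G)$, the descent through the generalized Fitting subgroup, and the verification that the degenerate factor $Q$ is involution-free, is comparatively routine once the general Sylow and component theory from \cite{bn,abc} is in place; the one subtle bookkeeping step is ensuring that no involutions slip into $Q$ via the preimage of elements of $\bar{G}$ centralising $S$, which requires the no-2-torus hypothesis to be used one final time at the level of $G$ itself, not just $\bar{G}$.
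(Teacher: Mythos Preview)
The paper does not prove this statement at all: it is quoted verbatim as a Fact with the citation \cite[Proposition X.2, page 502]{abc}, and the authors use it as a black box in the characteristic~$2$ analysis of Section~\ref{sec:char-2}. There is therefore nothing in the paper to compare your argument against.

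That said, your outline is broadly the right shape for how the result is obtained in \cite{abc}, and you have correctly located the load-bearing wall: the identification of the quasisimple components of $\bar G$ as Chevalley groups in characteristic~$2$ is exactly the even-type classification theorem. A couple of places where your sketch is looser than it should be: first, the passage from ``no $2$-torus in $G$'' to ``no $2$-torus in $\bar G$'' is not automatic and needs the fact that a $2$-torus in $\bar G$ would lift through the $2$-unipotent kernel $O_2^\circ(G)$ to a $2$-torus in $G$; you allude to this at the end but it is actually needed earlier, to rule out components of odd type. Second, the equality $\bar G = Q\ast S$ (rather than merely $F^*(\bar G)=Q_0\ast S$ for some $Q_0$) requires an argument that $\bar G$ induces only inner automorphisms on each algebraic component of $S$; this is where one uses that a connected group of finite Morley rank cannot induce a nontrivial finite group of outer automorphisms on a simple algebraic group, so $\bar G = S\cdot C_{\bar G}(S)$. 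Your description of $Q$ as ``the definable hull of $C_{\bar G}(S)$ together with the non-even-type part of $F^*(\bar G)$'' is not quite right: $Q$ is simply $C_{\bar G}(S)$, which is already connected and involution-free once the previous points are in place.
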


Finally we are ready to treat the characteristic 2 case.

\begin{proposition} {\rm \cite[Proposition 4.3]{bbsolvable}}
	Suppose that $G$ is a connected group of \fmrd, $V$ is an elementary abelian $2$-group, $G$ acts on $V$ definably, faithfully and generically $n$-transitively, where
	$n\geqslant \rk(V)$.
	Then $n=\rk(V)$, and $G\curvearrowright V$ is equivalent to
	${\operatorname{GL}}_n({K})\curvearrowright K^n$ for some \acf $K$ of characteristic $2$. \label{char2}
\end{proposition}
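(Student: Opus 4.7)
The plan is to induct on $\rk V$, with base cases $\rk V \leqslant 3$ supplied by Facts \ref{zilberaction}, \ref{deloro} and \ref{bordel}. The inductive step proceeds in three movements: (i) reduction to the case where $V$ is $G$-irreducible; (ii) a structural decomposition of $G$ via Fact \ref{centralproduct}; and (iii) identification via the Linearisation Theorem \ref{linearisation} together with a rank computation.

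For (i), suppose $U \leqslant V$ is a proper nontrivial definable $G$-invariant subgroup and $N$ is the kernel of the induced action of $G$ on $V/U$. Then $G/N$ is connected of finite Morley rank, acts definably, faithfully and generically $n$-transitively on the connected elementary abelian $2$-group $V/U$, and the inductive conclusion forces $n = \rk(V/U) < \rk V \leqslant n$, a contradiction. So $V$ is $G$-irreducible.

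For (ii), the aim is to apply Fact \ref{centralproduct}, which requires that $G$ contain no nontrivial $2$-torus. Any involution $t$ in a $2$-torus of $G$ acts on $V$ with $(t-1)^2 = 0$ in the characteristic-$2$ ring $\operatorname{End} V$ and is therefore unipotent in its action, in tension with $t$ lying in a divisible abelian $2$-subgroup; standard even-type arguments from \cite{abc} force $t = 1$, so no such torus exists. Similarly, for $P = O_2^\circ(G)$ the classical $p$-group fixed-point property applied to finite subgroups $P_0 \leqslant P$ acting on the finite $\mathbb{F}_2 P_0$-submodule of $V$ generated by the orbit of a generic vector, together with DCC on definable subgroups, yields $C_V(P) \neq 0$; since $P \triangleleft G$ this centralizer is $G$-invariant, so $C_V(P) = V$ by irreducibility, whence $P = 1$ by faithfulness. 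Fact \ref{centralproduct} then gives $G = Q \ast S$, with $Q$ involution-free and $S$ a central product of quasisimple algebraic groups over algebraically closed fields of characteristic $2$.

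The main obstacle I anticipate is showing $S \neq 1$. The $\symm_n$ section $H/K$ produced by Lemma \ref{lm:existence-of-hyperoctahedral} (in its abelian $2$-group form) is the natural source of involutions in $G$: the task is to lift a transposition of $\symm_n$ to an honest involution in $G$, which would contradict $G = Q$ being involution-free. I expect this lifting to require careful Sylow- and Frattini-type arguments drawn from \cite{abc}, in the spirit of but technically simpler than the sharpness analysis of \cite{bbnotsharp}. Once $S \neq 1$, it supplies a definable normal simple algebraic subgroup $L \triangleleft G$ over some algebraically closed field $F$ of characteristic $2$.

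For (iii), Theorem \ref{linearisation} applied with $L \triangleleft G$ endows $V$ with an $F$-vector space structure on which $G$ acts $F$-linearly, decomposing $G$ as a central product of simple algebraic groups and a torus and identifying its enveloping ring in $\operatorname{End} V$ with $\operatorname{End}_F V$. Setting $d = \dim_F V = \rk V$, the inclusion $G \hookrightarrow \operatorname{GL}_F V$ gives $\rk G \leqslant d^2$, while generic $n$-transitivity gives $\rk G \geqslant n \cdot \rk V = nd$; combined with $n \geqslant \rk V = d$, this forces $n = d$ and $\rk G = d^2$, so $G = \operatorname{GL}_d(F)$ by connectedness, and the action is equivalent to $\operatorname{GL}_n(F) \curvearrowright F^n$ with $K = F$ of characteristic $2$.
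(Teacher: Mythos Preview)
Your strategy --- decompose $G$ via Fact~\ref{centralproduct}, feed a quasisimple factor into the Linearisation Theorem~\ref{linearisation}, then finish with a rank count --- is precisely the paper's route. There is, however, a real gap in step~(i): your induction rules out only \emph{infinite} proper $G$-invariant $U$, since for finite $U$ one has $\rk(V/U) = \rk V$ and the inductive hypothesis does not apply. So you obtain $G$-\emph{minimality} of $V$, not $G$-\emph{irreducibility}, and Theorem~\ref{linearisation} requires the latter. The paper resolves this by applying Linearisation to $\bar V = V/C_V(G)$ rather than to $V$: once $V$ is minimal, any finite $G$-invariant subgroup is centralised by the connected group $G$ and hence lies in $C_V(G)$, so $\bar V$ is honestly irreducible. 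One then identifies $G$ with $\operatorname{GL}(\bar V)$ and, as a separate final step, shows $C_V(G)=0$ so that $V=\bar V$.

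You are right to flag $S \neq 1$ as the crux of step~(ii); the paper's sketch suppresses it, but it is exactly what supplies the normal simple algebraic $L \lhd G$ that Theorem~\ref{linearisation} demands. A small point in~(iii): writing $d = \dim_F V = \rk V$ presupposes $\rk F = 1$; this is not given a priori, but your same inequalities force it, since $n \cdot d \cdot \rk F \leqslant \rk G \leqslant d^2 \cdot \rk F$ together with $n \geqslant \rk V = d \cdot \rk F$ yields $\rk F \leqslant 1$.
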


\noindent
{\em Sketch of Proof.} By studying the action of the subgroup $O_2(G)$ on $V$, we show that it is  trivial; hence Fact~\ref{centralproduct} applies and we conclude that $G=Q*S_1*\cdots *S_k$, where $Q$ is a connected
	group with no involutions, and $S_i$'s are  quasisimple algebraic groups. Then we apply Theorem~\ref{linearisation} to the action of $G$ on $\bar{V}=V/C_V(G)$ for the case $p=2$, and we get an algebraically closed field $K$ of characteristic 2, over which $\bar{V}$ is a vector space, and  $G$ is isomorphic to $\operatorname{GL}(\bar{V})$. By further analysis, we conclude that $C_V(G)$ is trivial, and hence the named actions in the statement are equivalent. \hfill $\Box$

\subsection{Proof of Theorem~\ref{Vabelian}} \label{sec:proof-thm-3}

{We are ready to prove Theorem~\ref{Vabelian}.} First, we need the following classical result of Macintyre.

\begin{fact} {\rm \cite[Theorem 6.7]{bn}}
	Let $G$ be an abelian group of \fmrd, then  $G=B\oplus D$, where $B$ is a subgroup of bounded exponent and $D$ is  {a definable} divisible subgroup.
	\label{macintyre}
\end{fact}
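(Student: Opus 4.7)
My plan is to identify the maximal divisible subgroup $D$ of $G$, show that it is definable using the descending chain condition on definable subgroups, and then split $D$ off as a direct summand by invoking the classical fact that divisible abelian groups are injective as $\mathbb{Z}$-modules.

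For each positive integer $n$ the subgroup $nG$ is definable, being the image of the definable endomorphism $x\mapsto nx$, so the descending chain $G \supseteq 2!\,G \supseteq 3!\,G \supseteq \cdots$ consists of definable subgroups. By the DCC for definable subgroups in groups of \fmrd, this chain stabilizes: there exists $N$ with $n!\,G = N!\,G$ for every $n\geqslant N$. I would set $D = N!\,G$ and check that $D$ is in fact divisible. For any $k\geqslant 1$, choose $n$ large enough that $kN! \mid n!$; then $D = n!\,G \subseteq (kN!)G \subseteq N!\,G = D$, so $(kN!)G = D$. Hence any $d\in D$ can be written $d=(kN!)g$ for some $g\in G$, and $e = (N!)g \in D$ satisfies $ke = d$, which establishes divisibility of $D$. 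Moreover, $N!\,x\in D$ for every $x\in G$, so $G/D$ has exponent dividing $N!$, and in particular bounded exponent.

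To conclude, since $D$ is a divisible abelian group it is an injective object in the category of abelian groups, and so the inclusion $D\hookrightarrow G$ splits: there exists a subgroup $B\leqslant G$ with $G = B\oplus D$, and $B\cong G/D$ has bounded exponent dividing $N!$. The only substantive use of the finite Morley rank hypothesis is the DCC argument that collapses the subgroup $\bigcap_n n!\,G$ down to a single definable subgroup $N!\,G$; this is the conceptual heart of the statement, since for a general abelian group the divisible part is of course a subgroup but not a priori definable. Note that the complement $B$ is not asserted to be definable, and in general need not be --- the theorem only requires $B$ to be a subgroup of bounded exponent, which is exactly what the argument produces.
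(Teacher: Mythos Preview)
The paper does not prove this statement; it is recorded as a Fact with a citation to \cite[Theorem 6.7]{bn} and used as a black box (Macintyre's classical result). Your argument is correct and is essentially the standard proof one finds in that reference: use the descending chain condition on the definable subgroups $n!\,G$ to identify the divisible part $D$ as a single definable subgroup $N!\,G$, verify divisibility directly, observe that $G/D$ then has bounded exponent, and split $D$ off via injectivity of divisible abelian groups.
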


\noindent
{\em Sketch of proof of Theorem~\ref{Vabelian}.} Again we use induction on $\rk(V)$.  First we express $V=B\oplus D$ by
using Macintyre's Theorem. Then induction hypothesis gives us either $V=D$ is divisible, or $V=B$ is of bounded exponent. It is not hard to show that the exponent of $V$ is prime in the latter case. Hence, by using the above results, we complete the proof.
\hfill $\Box$

{As was explained in section \ref{sec:equivalen-to-main}, the main theorem of this work (Theorem~\ref{th:mail-on-primitive-affine}) follows from Theorem~\ref{Vabelian}}.

\subsection{A slight generalisation}

{Finally, we will  generalise Theorem~\ref{Vabelian} to the case where $V$ is a connected solvable group of \fmrd.}

\begin{corollary} \label{cor:Vsolvable}	\cite{bbsolvable} Suppose that $G$ is a connected group of \fmrd, $V$ is a connected solvable group, $G$ acts on $V$ definably, faithfully and generically $n$-transitively, where
	$n\geqslant \rk(V)$.
	Then $n=\rk(V)$, and $G\curvearrowright V$ is equivalent to
	${\operatorname{GL}}_n({K})\curvearrowright K^n$ for some \acf $K$.
\end{corollary}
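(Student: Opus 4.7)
The plan is to reduce to Theorem~\ref{Vabelian} by passing to the abelianisation $W := V/[V,V]$. Suppose, for contradiction, that $V$ is not abelian, so that $V' := [V,V] \neq 1$. Since $V$ is a connected group of \fmrd, $V'$ is a connected definable characteristic subgroup of positive Morley rank, and so $0 < \rk(V') < \rk(V) \leqslant n$, giving $\rk(W) = \rk(V) - \rk(V') < n$. Because $V'$ is $G$-invariant, $G$ acts definably on the connected abelian group $W$; let $K$ be the definable kernel of this action, so that $G/K$ is connected and acts faithfully on $W$.

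The next step is to verify that the induced action of $G/K$ on $W$ is still generically $n$-transitive. Let $A \subseteq V^n$ be the generic $G$-orbit provided by the hypothesis, and let $\pi \colon V \to W$ be the natural projection. The product map $\pi^n \colon V^n \to W^n$ is $G$-equivariant with all fibres of rank $n\rk(V')$, so $\pi^n(A)$ is a single $G/K$-orbit in $W^n$ satisfying
\[
\rk(\pi^n(A)) \geqslant \rk(A) - n\rk(V') = n\rk(V) - n\rk(V') = n\rk(W),
\]
and is therefore generic in $W^n$.

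Applying Theorem~\ref{Vabelian} to $G/K \curvearrowright W$ then forces $n = \rk(W)$, contradicting $n > \rk(W)$. Hence $V$ must already have been abelian, and the corollary follows immediately from Theorem~\ref{Vabelian} applied to $G \curvearrowright V$. The main technical point is the descent of generic $n$-transitivity from $V$ to $W$, which is controlled by the rank-of-fibres computation above; the supporting facts --- connectedness and definability of $V'$ in a connected group of \fmrd, connectedness of the quotient $G/K$, and preservation of $G$-orbits under equivariant surjections --- are all well established in this context, so beyond this bookkeeping I expect no genuine obstacle.
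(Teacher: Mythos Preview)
Your argument is correct and follows exactly the route the paper takes: pass to the abelianisation $V/[V,V]$, observe that generic $n$-transitivity descends to this quotient, apply the abelian case (Theorem~\ref{Vabelian}) to force $n=\rk(V/[V,V])$, and conclude $[V,V]=1$. The paper phrases this as an induction on $\rk(V)$ rather than a direct appeal to Theorem~\ref{Vabelian}, but since the quotient is already abelian the two formulations coincide; your version is in fact slightly more explicit, supplying the fibre-rank computation that the paper leaves implicit in the phrase ``Note that $G$ acts generically $n$-transitively on $V/[V,V]$''.
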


\noindent
{\em Sketch of proof.}    Induction is used on $\rk(V)$. It suffices to show $V$ is abelian. Note that $G$ acts  generically {$n$}-transitively on $V/[V,V]$. By the induction hypothesis, $\rk(V/[V,V])=n\rk(V)$ and thus $[V,V]=0$. Therefore, we are done. \hfill $\Box$

\begin{problem} Generalise Corollary \ref{cor:Vsolvable} to the case when $V$ is an arbitrary connected group of \fmrd. The assumption that $V$ is solvable appears to be exessive.

\end{problem}

\section{Conclusions and some open problems} \label{sec:COnclusions}

\subsection{Lessons learned} In this survey, we were trying to demonstrate that theory of groups of \fmr reached a stage in its development where a further progress requires some {serious} strategic planning. In the next section, we suggest some possible approaches to {the} analysis of primitive groups of regular type (see section \ref{sec:Macpherson-Pillay} for a definition). We wish to emphasise: this is not yet a plan, it is just a first sketch of what potentially could become a plan. We would be happy to have a discussion with anyone who could {be} interested.

\subsection{Possible approaches to Conjectures \ref{conj:definable-on-simple} and \ref{conj:no-regular-type}} \label{subsec:approaches-to-regular} The first exploratory steps to the confirmation of Conjectures \ref{conj:definable-on-simple} and \ref{conj:no-regular-type}  could be solving the following problems.

First of all, we need to check the usual suspects, hypothetical \emph{bad groups}. An infinite simple group of \fmr is called \emph{bad} if every  proper definable subgroup is nilpotent. If such groups exist, they would be counterexamples to the Cherlin-Zilber Algebraicity Conjecture.

\begin{problem} \label{prob:noirreducibleonbad}
Prove that bad groups of \fmr do not admit connected groups of \fmr acting on them faithfully, definably, and irreducibly other than themselves  acting by conjugation.
\end{problem}

Here is a possible approach to this problem. It is known that bad groups do not admit involutive automorphisms \cite[Proposition 13.4]{bn}, so if $S$ is bad and a connected group $M$ acts on $S$ faithfully and irreducibly, then $M$ contains no involutions. Also, maximal definable subgroups of a bad group (traditionally called \emph{Borel subgroups}) are connected, nilpotent, and conjugate. Moreover, they are \emph{disjoint}: if $B$ and $C$ are Borel subgroups in $S$ and $B \ne C$, then $B \cap C=1$. Set $G= SM$, then the Frattini argument yields a factorisation $G = S\cdot N_G(B)$. But we also have the factorisation $G= SM$, which creates some tension, which perhaps could be exploited and lead to a contradiction.

The following issue is perhaps considerably harder.

\begin{problem}
Prove that simple groups of \fmr and degenerate type do not admit connected groups of \fmr acting on them faithfully, definably,  and irreducibly  other than themselves  acting by conjugation.
\end{problem}

The same problem can be approached from an opposite direction.

\begin{problem}
Prove that  a simple algebraic group over an \acf $K$ cannot act definably and irreducibly on a simple group of \fmrd\ other than itself.
\end{problem}

The case when $K$ is of characteristic $2$ is likely to be easy in view of Tuna Alt\i nel's Lemma \cite[Proposition I.10.13]{abc}: connected nilpotent $2$-groups {of bounded exponent} can act only trivially on groups without involutions.

\begin{problem} \label{prob:boundson-t}
Let $(G,X)$ be a primitive permutation group of \fmrd\  and regular type which decomposes as $G=SM$ where $S$ is a regular normal {subgroup} and $M$ a point stabiliser. Assume that $M$ is generically $t$-transitive on $S$. Get some bounds on $t$ which do not depend on $G=SM$.
\end{problem}

First of all, the situation when $S$ has Morley rank $\leqslant 4$ is well understood. It is well-known that there is no simple group of Morley rank $1$ or $2$ \cite{Ch-GSR}. A simple group of Morley rank $3$ is isomorphic to $\operatorname{PSL}_2(K)$ for some \acf $K$, by Oliver Fr\'{e}con \cite{Frecon2018}.  A simple group of Morley rank $4$ is bad by Joshua Wiscons \cite[Theorem A]{wiscons2016} and therefore does not admit involutive automorphisms, which means that we are in a very restricted case of Problem \ref{prob:noirreducibleonbad}.

Regarding potential bounds on the degree of generic multiple transitivity asked for in Problem \ref{prob:boundson-t}, proving the bound $t\leqslant 5$ appears to be feasible. By Lemma \ref{hyperoctahedral},  $M$ contains the hyperoctahedral group $\symm_t\ltimes\mathbb Z_2^t$ as a definable section $H/K \simeq \symm_t\ltimes\mathbb Z_2^t$ for some definable subgroups $1\leqslant K \lhd H \leqslant M$. Now  assume that $t \geqslant 6$. The presence of the hyperoctahedral section $\symm_t\ltimes\mathbb Z_2^t$ with $t\geqslant 6$ in $M$ is likely to mean that $M$  is a group of odd type of Pr\"{u}fer $2$-rank at least $3$, where the machinery developed for analysis of odd type groups works quite well. The irreducibility of the action of $M$ on $S$ is a powerful property which so far had not been properly explored. In a closely related development,   Tuna Alt\i nel and   Joshua Wiscons \cite{aw2022} studied the actions of the alternating groups $\mathop{\rm Alt}_t$ on groups $S$ of degenerate type, and obtained some promising results; what would happen if this  $\mathop{\rm Alt}_t$ lives inside a connected group $M$ acting on $S$ irreducibly?

So we have an umbrella problem:

\begin{problem}
Systematically study faithful irreducible definable actions of connected groups $M$ of \fmr on simple groups $S$. Pay special attention to situations when $M$ contains large alternating and especially hyperoctahedral subgroups.
\end{problem}

\section*{Acknowledgements}

{We thank the organizers of the 14th Ukraine Algebra Conference (Sumy Conference) which took place between 3--7 July 2023 for giving us an opportunity to present our work. We express our solidarity with Ukrainian mathematicians.}

Gregory Cherlin has carefully read  the paper and suggested quite a number of subtle stylistic changes and grammar improvements.

\end{document}